\newtheorem{theorem}{Theorem}[section]
\newtheorem{proposition}[theorem]{Proposition}
\newtheorem{lemma}[theorem]{Lemma}
\newtheorem{corollary}[theorem]{Corollary}
\theoremstyle{definition}
\newtheorem*{remark*}{Remark}
\newtheorem*{acknowledgements}{Acknowledgements}
\newtheorem{remark}[theorem]{Remark}
\numberwithin{equation}{section}
\newcommand{\cA}{\mathcal{A}}
\newcommand{\cH}{\mathcal{H}}
\newcommand{\cS}{\mathcal{S}}
\newcommand{\R}{\mathbb{R}}
\newcommand{\M}{\mathbb{M}}
\newcommand{\ct}{\mathrm{ct}}
\DeclareMathOperator{\dist}{dist}
\DeclareMathOperator{\length}{length}
\DeclareMathOperator{\E}{E}
\newcommand{\eps}{\varepsilon}
\title{Existence of closed embedded curves of constant curvature via min-max}
\author{Lorenzo Sarnataro}
\address{Department of Mathematics, Princeton University, Princeton, NJ 08540, USA}
\email{lorenzos@princeton.edu}
\author{Douglas Stryker}
\address{Department of Mathematics, Princeton University, Princeton, NJ 08540, USA}
\email{dstryker@princeton.edu}
\begin{document}

\maketitle


\begin{abstract}
    We find conditions under which Almgren-Pitts min-max for the prescribed geodesic curvature functional in a closed oriented Riemannian surface produces a closed embedded curve of constant curvature. In particular, we find a closed embedded curve of any prescribed constant curvature in any metric on $S^2$ with $1/8$-pinched Gaussian curvature.
\end{abstract}

\section{Introduction}

It is a foundational problem in symplectic geometry, dynamical systems, and variational methods to find closed curves of constant geodesic curvature in a closed oriented Riemannian surface. This problem was famously posed by Arnold (see \cite[Problem 1981-9]{arnold}), and it has inspired an abundance of research using techniques from many fields of mathematics (for a few examples, see \cite{GinzburgNew}, \cite{Ginzburg}, \cite{SchneiderS2}, \cite{AB}, \cite{DRZ}).

We focus on the problem of finding closed \emph{embedded} curves of constant curvature. From the dynamical perspective, these objects correspond to \emph{simple} closed orbits of the flow of electrons in a constant magnetic field. As a motivating problem, it is a long-standing conjecture of Novikov (see \cite[\S5]{novikov}) that every metric on the 2-sphere admits a closed embedded curve of constant curvature $c$ for every $c > 0$.

In this paper, we take a variational approach to the existence problem. Namely, we use min-max techniques to produce non-singular critical points of the functional
\[ \cA^c(\Omega) := \cH^1(\partial \Omega) - c\cH^2(\Omega). \]
The Euler-Lagrange equation for this functional demonstrates that any non-singular critical point is a domain bounded by a closed curve of curvature $c$.

Using the Almgren-Pitts min-max framework, as developed for the $\cA^c$ functional for curves in surfaces by \cite{ZhouZhuCurves} and \cite{KL}, we obtain the following general existence result. For convenience, we define\footnote{The function $\ct_k(r)$ appears naturally in comparison geometry, see \cite[\S6.4]{petersen}}
\[ \ct_k(r) := \begin{cases}
    \sqrt{k}\cot(r\sqrt{k}) & k > 0\\
    1/r & k = 0\\
    \sqrt{-k}\coth(r\sqrt{-k}) & k < 0.
\end{cases} \]

\begin{theorem}\label{thm:general_surface}
    Let $(M^2, g)$ be a smooth closed oriented Riemannian surface and $c > 0$. If
    \begin{equation}\label{eqn:formula_general_surface}
        c > \ct_{\min K_g}(\mathrm{inj}(M,g)),
    \end{equation}
    then there is a nontrivial closed embedded curve of constant curvature $c$ in $(M, g)$ with $\cA^c$ Morse index 1.
\end{theorem}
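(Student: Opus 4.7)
The plan is to apply the Almgren-Pitts min-max theory for the functional $\cA^c$, as developed in \cite{ZhouZhuCurves} and \cite{KL}, to a carefully chosen one-parameter sweepout of $M$ by Caccioppoli sets. The role of hypothesis \eqref{eqn:formula_general_surface} is twofold: it yields an explicit upper bound $L^*$ on the min-max width $W$ that is small enough to force regularity and embeddedness of the output, and it provides the comparison data needed to realize this bound through a competitor sweepout.

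Since $\ct_{\min K_g}$ is strictly decreasing on its maximal positive domain and tends to $+\infty$ as $r \to 0^+$, the hypothesis \eqref{eqn:formula_general_surface} determines a unique $R_c \in (0, \mathrm{inj}(M,g))$ with $\ct_{\min K_g}(R_c) = c$. In a simply connected model surface of constant Gauss curvature $\min K_g$, the geodesic ball of radius $R_c$ is a critical point of $\cA^c$ with explicit positive value $L^* := \cA^c_{\mathrm{model}}(B_{R_c})$. Fix $p \in M$ and construct a continuous sweepout $\{\Omega_t\}_{t \in [0,1]}$ with $\Omega_0 = \emptyset$ and $\Omega_1 = M$, passing through $B_{R_c}(p)$ at some intermediate time via the geodesic balls $B_r(p)$ for $r \in [0, R_c]$, then extended to cover the remainder of $M$. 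A Bishop-Gromov-type comparison using $K_g \geq \min K_g$ yields $\cA^c(B_r(p)) \leq L^*$ for all $r \in [0, R_c]$, and the strict decrease of $\cA^c(B_r(p))$ in $r$ just past $R_c$ is used to keep the extension below $L^*$.

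Applying the min-max machinery to this sweepout produces a stationary varifold $V$ for $\cA^c$ with $\|V\|(M) \leq W \leq L^*$ and Almgren-Pitts index at most $1$. Nontriviality $W > 0$ follows because any continuous sweepout from $\emptyset$ must pass through a set of any fixed small area $A_0$, whose boundary length is at least $2\sqrt{\pi A_0}(1 - o_{A_0}(1))$ by the asymptotically Euclidean isoperimetric profile, giving $\cA^c(\Omega) \geq \sqrt{\pi A_0}$ uniformly. By the regularity theory of \cite{ZhouZhuCurves} and \cite{KL}, $V$ corresponds to a finite integer combination $\sum_i m_i [\gamma_i]$ of smooth closed curves of constant geodesic curvature $c$; the bound $W \leq L^*$ combined with the Morse index constraint $\leq 1$ must rule out higher multiplicity $m_i \geq 2$ and disconnected configurations, leaving a single embedded curve $\gamma$ of curvature $c$. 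The Morse index is exactly $1$: the construction gives index at most $1$, and index at least $1$ follows because a local-minimum $V$ at level $W$ would admit sweepout perturbations reducing the max below $W$, contradicting the definition of the width.

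The principal obstacle is the regularity step: verifying that the bound $W \leq L^*$ forced by \eqref{eqn:formula_general_surface} is strictly below the $\cA^c$-value of every candidate singular critical point (multiply covered curves, disconnected unions, and configurations violating embeddedness). This requires sharp comparison estimates between $\cA^c$ values in $(M, g)$ and in the model space of curvature $\min K_g$, together with a classification of critical configurations at this width. A secondary technical point is the explicit design of the sweepout extension past $R_c$, which must accommodate the full topology of $M$ without inflating $\cA^c$ above $L^*$.
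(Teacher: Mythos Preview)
Your proposal misses the actual mechanism by which hypothesis \eqref{eqn:formula_general_surface} enters, and the argument as written has two genuine gaps.

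First, the sweepout construction. Pushing geodesic balls $B_r(p)$ from $r=0$ to $r=R_c$ is fine, but the ``extension past $R_c$'' that you flag as a secondary technical point is in fact the entire content: for a surface of high genus or a sphere with complicated geometry there is no reason one can sweep from $B_{R_c}(p)$ to $M$ while keeping $\cA^c$ below the model value $L^*$. Nothing in the hypothesis gives you control on this part of the sweepout, and the paper makes no attempt to bound the width in this way.

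Second, and more seriously, the regularity step is not what you describe. The output of \cite{ZhouZhuCurves} and \cite{KL} is not a general integer combination $\sum m_i[\gamma_i]$; it is a set $\Omega$ whose boundary is a $C^{1,1}$ immersion with at most one transverse self-intersection point $x_0$, smooth and of curvature $c$ elsewhere (Theorem~\ref{thm:kl}). The task is to exclude this one self-intersection (and the almost-embedded self-touching case), and the paper does this \emph{without} any width upper bound of the form $W\le L^*$. The two possible local pictures at $x_0$ are handled separately. In Configuration~1 ($\Omega$ disconnected), the second variation with $\phi\equiv 1$ is strictly negative whenever $c^2\ge -\min K_g$, and a cut-and-paste competitor sweepout then shows $\cA^c(\Omega)>W_c$. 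In Configuration~2 ($\Omega$ connected), the argument is purely geometric and does not use min-max at all: viewing $\Omega$ as a $c$-convex domain, the two preimages of $x_0$ are joined by a minimizing geodesic $\sigma\subset\Omega$, which is a geodesic loop in $M$ and hence has $\mathrm{length}(\sigma)\ge 2\,\mathrm{inj}(M,g)$; on the other hand Dekster's comparison theorem (Proposition~\ref{prop:dekster_comparison}) gives $\mathrm{length}(\sigma)\le 2R_0(c,\min K_g)$. The hypothesis \eqref{eqn:formula_general_surface} is exactly the inequality $\mathrm{inj}(M,g)>R_0(c,\min K_g)$, so these bounds are incompatible and Configuration~2 cannot occur. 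This Dekster-comparison step is the core idea you are missing; your proposed mechanism (``$W\le L^*$ forces every singular configuration to have $\cA^c>L^*$'') is neither proved nor true in general.

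Finally, for the index: the upper bound $\le 1$ comes from the standard Marques--Neves deformation argument, not from a width comparison, and the lower bound $\ge 1$ follows by plugging $\phi\equiv 1$ into the second variation, which is negative since \eqref{eqn:formula_general_surface} implies $\min K_g+c^2>0$.
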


In \S\ref{sec:examples}, we develop examples that demonstrate the ways in which Theorem \ref{thm:general_surface} is sharp. For instance, we show that Theorem \ref{thm:general_surface} is sharp for flat tori. It is also sharp for closed hyperbolic surfaces of large injectivity radius and large $c$. Finally, we exhibit a sphere that shows that Theorem \ref{thm:general_surface} is sharp for the method of proof.

We obtain a stronger result in positive ambient curvature. We summarize the result with a slightly weakened but simplified statement here. We refer the reader to (\ref{eqn:conditions}) for the full result, and to Figure \ref{fig:comparison_pos} for an illustration of the full result.

\begin{theorem}\label{thm:positive_curvature}
    Let $(S^2, g)$ be a smooth Riemannian 2-sphere with $K_g > 0$, and let $c > 0$. If
    \begin{equation}\label{eqn:formula_positive_curvature}
        \min K_g \geq \frac{1}{8}\max K_g \ \ \mathrm{or}\ \ c \geq \frac{1}{\pi}\sqrt{\max K_g},
    \end{equation}
    then there is a nontrivial closed embedded curve of constant curvature $c$ in $(S^2, g)$ with $\cA^c$ Morse index 1.
\end{theorem}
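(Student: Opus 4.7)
The plan is to deduce Theorem \ref{thm:positive_curvature} from Theorem \ref{thm:general_surface} by verifying its hypothesis (\ref{eqn:formula_general_surface}) under each alternative in (\ref{eqn:formula_positive_curvature}). The only outside ingredient is Klingenberg's injectivity radius estimate for orientable even-dimensional manifolds of positive sectional curvature: since $(S^2, g)$ is orientable and $K_g > 0$,
\[ \mathrm{inj}(S^2, g) \geq \frac{\pi}{\sqrt{\max K_g}}. \]
Note also that Myers' theorem gives $\mathrm{inj}(S^2,g) \leq \pi/\sqrt{\min K_g}$, so $\mathrm{inj}\cdot \sqrt{\min K_g} \in (0, \pi]$ and $\ct_{\min K_g}(\mathrm{inj})$ is well-defined.

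First I would handle the alternative $c \geq \tfrac{1}{\pi}\sqrt{\max K_g}$. The elementary inequality $x\cot x < 1$ on $(0,\pi)$ translates (via $x = r\sqrt{k}$) to $\ct_{k}(r) < 1/r$ for every $k > 0$ and $r \in (0, \pi/\sqrt{k})$. Combined with the Klingenberg bound this yields
\[ \ct_{\min K_g}(\mathrm{inj}) < \frac{1}{\mathrm{inj}} \leq \frac{\sqrt{\max K_g}}{\pi} \leq c, \]
so (\ref{eqn:formula_general_surface}) holds and Theorem \ref{thm:general_surface} applies, producing a closed embedded curve of constant curvature $c$ with $\cA^c$ Morse index $1$.

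Next I would address the alternative $\min K_g \geq \tfrac{1}{8}\max K_g$. Because the conclusion must hold for \emph{every} $c > 0$, the natural target becomes the stronger inequality $\ct_{\min K_g}(\mathrm{inj}) \leq 0$, i.e.\ $\mathrm{inj} \cdot \sqrt{\min K_g} \geq \pi/2$. Klingenberg alone gives only $\mathrm{inj}\cdot \sqrt{\min K_g} \geq \pi\sqrt{\min K_g/\max K_g} \geq \pi/\sqrt{8}$, strictly less than $\pi/2$. I expect this to be the main obstacle, and to overcome it one must leave the black-box reduction to Theorem \ref{thm:general_surface}. Two approaches seem plausible: (i) a sharper injectivity radius estimate on pinched positively-curved $2$-spheres, perhaps by combining Klingenberg with Myers and Gauss--Bonnet to rule out long, thin geometries and upgrade $\pi/\sqrt{\max K_g}$ to a quantity comparable to $\pi/(2\sqrt{\min K_g})$ under $\tfrac{1}{8}$-pinching; or (ii) a direct min-max argument that bypasses the $\mathrm{inj}$-based sufficient condition, using a distance-sphere sweepout centered at a judiciously chosen point out to radius just under $\pi/(2\sqrt{\min K_g})$ and controlling its width by invoking both curvature bounds simultaneously (via Bishop--Gromov/Toponogov applied to the distance function).

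The main obstacle is thus the $\tfrac{1}{8}$-pinched case: closing the gap between the $\pi/\sqrt{8}$ provided by Klingenberg and the $\pi/2$ needed to cover all $c > 0$. The alternative involving the curvature-versus-$c$ bound follows cleanly from Klingenberg plus the comparison $\ct_k(r) < 1/r$, and so should be routine.
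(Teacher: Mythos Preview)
Your reduction for the alternative $c \geq \tfrac{1}{\pi}\sqrt{\max K_g}$ is correct and coincides with the paper: after normalizing $\max K_g = 1$, Klingenberg gives $\mathrm{inj}\geq\pi$, and since $\ct_k$ is decreasing in $r$ one gets $\ct_{\min K_g}(\mathrm{inj}) \leq \sqrt{\min K_g}\cot(\pi\sqrt{\min K_g}) < 1/\pi \leq c$ by the inequality $x\cot x<1$ on $(0,\pi)$. This is exactly the first alternative in the paper's condition (\ref{eqn:conditions}), and Theorem \ref{thm:general_surface} then applies.

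For the $\tfrac{1}{8}$-pinched alternative there is a genuine gap, and neither of your proposed fixes is what the paper does. Your approach (i) would require $\mathrm{inj}\sqrt{\min K_g}\geq\pi/2$ under $\tfrac18$-pinching, i.e.\ an improvement of Klingenberg by a factor $\sqrt{2}$; no such estimate is invoked or proved. Your approach (ii), a distance-sphere sweepout, is not the mechanism either.

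What the paper actually does for this case is abandon the black-box reduction to Theorem \ref{thm:general_surface} and instead assume a Configuration~2 region $\Omega$ achieves $W_c$, then derive contradictory upper and lower bounds for $W_c$. The lower bound combines $\length(\gamma^i)>2\,\mathrm{inj}\geq 2\pi$ with the Gauss--Bonnet estimate $\cH^2(\Omega)\leq 2\pi(1-2c)/\min K_g$. The upper bound comes from an explicit three-piece competitor sweepout: (1) a curve-shortening-flow path contracting the loop $\gamma^1$ of the figure-eight $\partial\Omega$ to a point; (2) a length min-max path of embedded arcs inside the convex domain $\overline\Omega$ from $\gamma^1$ to $\gamma^2$, whose maximal length is either $\leq\max_i\length(\gamma^i)$ or realized by a geodesic in $\Omega$ and hence bounded via Dekster's comparison (Proposition \ref{prop:dekster_comparison}); (3) a constrained-minimization path from $\gamma^2$ to $S^2$. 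Comparing the two bounds yields the second alternative in (\ref{eqn:conditions}), which one checks numerically holds for all $c>0$ once $\min K_g\geq 1/8$ (indeed once $\min K_g\geq 0.1167$). The length min-max in step (2), Proposition \ref{prop:min-max_Omega}, is itself nontrivial and is developed separately in the appendix via a fixed-endpoint Birkhoff curve-shortening process.
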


We expect that Theorem \ref{thm:positive_curvature} is not sharp, although it is not clear how to significantly improve upon the methods of this paper. Indeed, since our result surpasses the long-standing and seemingly natural \emph{quarter-pinching} threshold (see \cite{SchneiderS2}), we view Theorem \ref{thm:positive_curvature} as strong evidence in favor of the conjecture of Novikov in positive curvature.

\subsection{Comparison to previous results}
In the case of the 2-sphere (i.e.\ the conjecture of Novikov), the only successful program for producing closed embedded curves of constant curvature is a degree theory developed by \cite{SchneiderS2}. Let $(S^2, g)$ be a smooth Riemannian 2-sphere. Below we compare the conditions under which we and \cite{SchneiderS2} produce a closed embedded curve of constant curvature $c$.

\emph{Case 1: $\min K_g > 0$}. Suppose (without loss of generality) that $\max K_g = 1$.
\begin{itemize}
    \item \cite{SchneiderS2}: $\min K_g > 1/4$ or $c \geq 1/2$.
    \item Our result: $\min K_g \geq 1/8$ or $c \geq 1/\pi$.
\end{itemize}
In fact, our result is even stronger than stated. Figure \ref{fig:comparison_pos} illustrates precisely our improvement. We note that we only need to assume $\min K_g \geq 0.1167$, which is smaller than $1/8$.

\begin{remark}
    \cite{RSpositive} use the degree theory approach to show that for any $(S^2, g)$ with $K_g > 0$, there is an (inexplicit) $\eps = \eps(S^2, g) > 0$ so that there is a closed embedded curve of curvature $c$ for all $c \leq \eps$. While our methods do not apply to the case of small $c$ in general, we obtain an \emph{explicit} lower bound for $\eps(S^2, g)$ when $\min K_g \geq \frac{1}{16}\max K_g$ (see Figure \ref{fig:comparison_pos}).
\end{remark}

\emph{Case 2: $\min K_g = 0$}.
\begin{itemize}
    \item \cite{SchneiderS2}: $c \geq \frac{\pi}{2}\,\mathrm{inj}(S^2, g)^{-1}$.
    \item Our result: $c > \mathrm{inj}(S^2, g)^{-1}$.
\end{itemize}
Hence, we improve by a factor of $\pi/2$.

\emph{Case 3: $\min K_g < 0$}. Suppose (without loss of generality) that $\min K_g = -1$.
\begin{itemize}
    \item \cite{SchneiderS2}: $c \geq \frac{\pi}{2}\,\mathrm{inj}(S^2, g)^{-1}(1 + \frac{\mathrm{Area}(S^2, g)}{2\pi})$.
    \item Our result: $c > \coth(\mathrm{inj}(S^2, g))$.
\end{itemize}
Figure \ref{fig:comparison_neg} illustrates our improvement.\footnote{By a simple area estimate (see \cite[\S6.1]{CK}), we have $\mathrm{Area}(S^2, g) \geq \mathrm{inj}(S^2, g)^2$. So at best, \cite{SchneiderS2} applies when $c > \frac{\pi}{2}\,\mathrm{inj}(S^2,g)^{-1}(1+\frac{\mathrm{inj}(S^2,g)^2}{2\pi})$, which is what we plot in Figure \ref{fig:comparison_neg}.}

\begin{figure}
    \begin{minipage}{0.49\textwidth}
    \centering
    \begin{tikzpicture}[scale=6,domain=0:0.5]
        \draw (0,0) -- (.125, 0) node[below] {$\frac{1}{8}$};
        \draw (.125,0) -- (.25,0) node[below] {$\frac{1}{4}$};
        \draw[->] (.25,0) -- (.5,0) node[right] {$\min K_g$};
        \draw (0,0) -- (0,1/pi) node[left] {$\frac{1}{\pi}$};
        \draw (0,1/pi) -- (0,.5) node[left] {$\frac{1}{2}$};
        \draw[->] (0,.5) -- (0,.65) node[above] {$c$};
        \draw[dashed] (.125,0) -- (.125,1/pi);
        \draw[dashed] (0,1/pi) -- (.125,1/pi);
        
        \draw[color=red] (0,0.5) -- (0.25,0.5) ;
        \draw[color=red] (0.25,0) -- (0.25,0.5) node[right]{\cite{SchneiderS2}};
    
        \draw[color=blue] (0,1/pi) -- (0.1/4, 0.291689)
            -- (0.2/4, 0.264142)
            -- (0.3/4, 0.235599)
            -- (0.4/4, 0.205981)
            -- (.1167, .1856) node[right] {us}
            -- (.1165, .175)
            -- (.1158, .1625)
            -- (.1147, .15)
            -- (.1132, .1375)
            -- (.1112, .125)
            -- (.1088, .1125)
            -- (.1059, .1)
            -- (.1026, .0875)
            -- (.0987, .075)
            -- (.0943, .0625)
            -- (.0893, .05)
            -- (.0838, .0375)
            -- (.0775, .025)
            -- (.0705, .0125)
            -- (1/16, 0);
        \draw (1/16, 0) node[below] {$\frac{1}{16}$};
    \end{tikzpicture}
    \caption{$0<K_g\leq 1$.}\label{fig:comparison_pos}
    \end{minipage}
    \begin{minipage}{0.49\textwidth}
    \centering
    \begin{tikzpicture}[scale=.39,domain=0:10]
        \draw (0,0) -- (5,0) node[below] {5};
        \draw[->] (5,0) -- (10,0) node[right] {$\mathrm{inj}(S^2, g)$};
        \draw (0,0) -- (0,1) node[left] {1};
        \draw (0,1) -- (0,5) node[left] {5};
        \draw[->] (0,5) -- (0,10.6) node[above] {$c$};
        \draw[dashed] (0, 1) -- (10, 1);

        \draw[color=red] (0.15770, 10) -- (0.17539, 9);
        \draw[color=red] (0.17539, 9) -- (0.19757, 8);
        \draw[color=red] (0.19757, 8) -- (0.22623, 7);
        \draw[color=red] (0.22623, 7) -- (0.26472, 6);
        \draw[color=red] (0.26472, 6) -- (0.31926, 5);
        \draw[color=red] (0.31926, 5) -- (0.40284, 4);
        \draw[color=red] (0.40284, 4) -- (0.46419, 3.5);
        \draw[color=red] (0.46419, 3.5) -- (0.54869, 3);
        \draw[color=red] (0.54869, 3) -- (0.67371, 2.5);
        \draw[color=red] (0.67371, 2.5) -- (0.88282, 2);
        \draw[color=red] (0.88282, 2) -- (1, 1.8208);
        \draw[color=red] (1, 1.8208) -- (1.25, 1.56914);
        \draw[color=red] (1.25, 1.56914) -- (1.5, 1.4222);
        \draw[color=red] (1.5, 1.4222) -- (1.75, 1.3351);
        \draw[color=red] (1.75, 1.3351) -- (2, 1.2854);
        \draw[color=red] (2, 1.2854) -- (2.5, 1.25332);
        \draw[color=red] (2.5, 1.25332) -- (3, 1.2736);
        \draw[color=red] (3, 1.2736) -- (3.5, 1.3238);
        \draw[color=red] (3.5, 1.3238) -- (4, 1.3927);
        \draw[color=red] (4, 1.3927) -- (5, 1.56416);
        \draw[color=red] (5, 1.56416) -- (6, 1.7618);
        \draw[color=red] (6, 1.7618) -- (7, 1.9744);
        \draw[color=red] (7, 1.9744) -- (8, 2.19635);
        \draw[color=red] (8, 2.19635) -- (9, 2.42453);
        \draw[color=red] (9, 2.42453) -- (10, 2.65708) node[above] {\cite{SchneiderS2}};

        \draw[color=blue] (0.10034, 10) -- (0.11157, 9);
        \draw[color=blue] (0.11157, 9) -- (0.12566, 8);
        \draw[color=blue] (0.12566, 8) -- (0.14384, 7);
        \draw[color=blue] (0.14384, 7) -- (0.16824, 6);
        \draw[color=blue] (0.16824, 6) -- (0.20273, 5);
        \draw[color=blue] (0.20273, 5) -- (0.25541, 4);
        \draw[color=blue] (0.25541, 4) -- (0.29389, 3.5);
        \draw[color=blue] (0.29389, 3.5) -- (0.34657, 3);
        \draw[color=blue] (0.34657, 3) -- (0.42365, 2.5);
        \draw[color=blue] (0.42365, 2.5) -- (0.54931, 2);
        \draw[color=blue] (0.54931, 2) -- (.625, 1.8031);
        \draw[color=blue] (.625, 1.8031) -- (.75, 1.5744);
        \draw[color=blue] (.75, 1.5744) -- (.875, 1.4206);
        \draw[color=blue] (.875, 1.4206) -- (1, 1.3130);
        \draw[color=blue] (1, 1.3130) -- (1.25, 1.1789);
        \draw[color=blue] (1.25, 1.1789) -- (1.5, 1.1048);
        \draw[color=blue] (1.5, 1.1048) -- (2, 1.0373);
        \draw[color=blue] (2, 1.0373) -- (3, 1.0050);
        \draw[color=blue] (3, 1.0050) -- (4, 1.0007);
        \draw[color=blue] (4, 1.0007) -- (5, 1.0001);
        \draw[color=blue] (5, 1.0001) -- (6, 1);
        \draw[color=blue] (6, 1) -- (10, 1) node[above] {us};
    \end{tikzpicture}
    \caption{$\min K_g = -1$.}\label{fig:comparison_neg}
    \end{minipage}
\end{figure}

\begin{remark}
    In fact, the degree theory approach produces \emph{two} distinct solutions in each of the above cases (due to topological considerations on $S^2$). We note that part of our argument can be used in combination with the degree theory of \cite{SchneiderS2} to also produce two distinct solutions under the assumptions of Theorem \ref{thm:general_surface} when the surface is $S^2$ (see Remark \ref{rem:schneider} for more details).
\end{remark}

\begin{remark}
    The degree theory approach also works to produce curves of prescribed (nonconstant) geodesic curvature, where the prescribing function is bounded from below by $c$. As in the above remark, a combination of part of our argument with \cite{SchneiderS2} can be used to produce two distinct closed embedded curves of prescribed (nonconstant) curvature in $S^2$ under the assumptions of Theorem \ref{thm:general_surface}.
\end{remark}

\begin{remark}
    Since the degree theory approach is not variational, the conclusion about the index is new in every case.
\end{remark}

As far as the authors are aware, there has been no positive work on the existence problem for \emph{embedded} curves in general smooth closed oriented Riemannian surfaces. The degree theory of Schneider is extended to surfaces of genus at least 2 in \cite{Schneiderhyp}, but over the class of \emph{Alexandrov embedded} curves, which need not be embedded. Hence, Theorem \ref{thm:general_surface} appears to be completely new for $M \neq S^2$.

\subsection{Strategy of proof}\label{ssec:strategy}
By the combined work of \cite{ZhouZhuCurves} and \cite{KL}, Almgren-Pitts min-max for $\cA^c$ produces a set $\Omega$ whose boundary is either:
\begin{enumerate}
    \item a closed embedded curve of constant curvature $c$ (with respect to $\Omega$),
    \item a closed almost-embedded curve of constant curvature $c$ (with respect to $\Omega$) with at least one self-touching point,
    \item a network of curves of constant curvature $c$ (with respect to $\Omega$) with one stationary node of degree 4.
\end{enumerate}
Our strategy is to find conditions under which (2) and (3) cannot occur.

In case (3), there are two possible configurations, according to whether $\Omega$ is disconnected (called ``Configuration 1'') or connected (called ``Configuration 2''). We refer the reader to Figure \ref{fig:configs} for a picture of these configurations. We observe that case (2) can be handled simultaneously with Configuration 2, so it suffices to rule out these two configurations.

\emph{Configuration 1:} We construct a competitor sweepout using the second variation of $\cA^c$ and a cut-and-paste procedure similar to \cite[\S4]{KL}, and conclude that any curve in Configuration 1 cannot achieve the first min-max width when $\min K_g + c^2 \geq 0$. Essentially, we find a graphical perturbation of a curve in Configuration 1 that decreases $\cA^c$, and from this new curve we construct a sweepout by resolving the self intersection in two different ways. Figure \ref{fig:sweepout} demonstrates the basic strategy of this construction (in particular, see steps (3) and (4) in Figure \ref{fig:sweepout}).

\begin{remark}
    The competitor sweepout we construct is very similar to the construction of \cite{CC} in the case of geodesics. We also observe that similar ideas are used in \cite{BWorkman}, although the setting of curves differs from the higher dimensional cases they consider in some important ways. Most importantly, the capacity argument of \cite{BWorkman} fails for curves.
\end{remark}

\emph{Configuration 2 (in general):} In this configuration, the second variation formula for $\cA^c$ has a bad term, so we cannot conclude as easily as in Configuration 1. Instead, we rule out this configuration \emph{only} using geometric considerations; namely, we completely ignore the fact that the configuration would have to achieve the min-max width.

We view $\Omega$ (which is connected in Configuration 2) as a surface with strictly $c$-convex boundary. We note that there are distinct points $p,\ q$ on the boundary that correspond to the same point in $M$ (i.e.\ a self touching point/node). Due to the convexity of the boundary, there is a length minimizing geodesic $\sigma$ in $\Omega$ from $p$ to $q$.
\begin{itemize}
    \item In $(M, g)$, $\sigma$ is a geodesic loop, so we have $\mathrm{length}(\sigma) \geq 2\,\mathrm{inj}(M, g)$.
    \item By a classical comparison geometry result of \cite{dekster}, $\mathrm{length}(\sigma)$ is bounded from above by the diameter of a closed curve of curvature $c$ in the model space of constant Gaussian curvature $\min K_g$.
\end{itemize}
These estimates contradict each other under the conditions of Theorem \ref{thm:general_surface}.

\begin{remark}\label{rem:schneider}
    The proof in \cite{SchneiderS2} also relies on the same geometric considerations, but \cite{SchneiderS2} uses a weaker upper bound for the length of $\sigma$. More precisely, \cite{SchneiderS2} observes that $\sigma$ is a stable curve and applies Bonnet-Myers when $K_g > 0$. By replacing this estimate with the estimate of \cite{dekster}, we obtain the same conclusions as \cite{SchneiderS2} under the assumptions of Theorem \ref{thm:general_surface} when $M = S^2$.
\end{remark}

\emph{Configuration 2 (in positive curvature):} In positive ambient curvature, we look for a non-graphical perturbation to construct a good competitor sweepout for curves in Configuration 2. In particular, we split the figure-eight curve $\partial \Omega$ into the union of two embedded loops $\gamma^1$ and $\gamma^2$, and construct the competitor sweepout in three steps.
\begin{enumerate}
    \item We use the curve shortening flow to construct a path between a point curve and $\gamma^1$.
    \item We choose a path of curves between $\gamma^1$ and $\gamma^2$ using a min-max procedure for length.
    \item We choose a path of curves between $\gamma^2$ and a point curve using constrained minimization for $\cA^c$.
\end{enumerate}
We refer the reader to Figure \ref{fig:length_sweepout} for an illustration of this sweepout. Since the curve shortening flow is length nonincreasing, the maximum length of curves in path (1) is $\mathrm{length}(\gamma^1)$. Using an estimate of \cite{dekster}, we obtain a good upper bound on the length of curves in path (2). Finally, \cite[Lemma 3.2]{KL} yields good $\cA^c$ bounds on path (3).

These estimates give an upper bound for the first $c$ min-max width. Combined with the lower bounds from the general case of Configuration 2, we deduce that a curve in Configuration 2 cannot achieve the first $c$ min-max width under the conditions of Theorem \ref{thm:positive_curvature} (or more specifically under conditions (\ref{eqn:conditions}).

\begin{remark}
    The are two non-sharp estimates used in the proof of Theorem \ref{thm:positive_curvature}. 
    \begin{enumerate}
        \item The lower bound for the length of $\gamma^1$ and $\gamma^2$: we ignore the fact that these curves have curvature $c$ and not 0.
        \item The upper bound for the $c$ min-max width: we ignore volume terms in the $\cA^c$ value of the competitor sweepout during the curve shortening flow piece (i.e. path (1)).
    \end{enumerate}
    While implementing these ideas would yield a slight improvement in the minimal curvature pinching for which we can solve the conjecture of Novikov, there is no hope of attacking the conjecture for all metrics of positive curvature along these lines because the sharper estimates would degenerate to our weaker estimates as $c \to 0$. Indeed, Figure \ref{fig:comparison_pos} demonstrates that our argument only works for $c \ll 1$ for metrics with at least $1/16$-curvature pinching.
\end{remark}

\subsection{Organization of the paper}
In \S\ref{sec:examples}, we develop some examples to examine the sharpness of Theorem \ref{thm:general_surface}. In \S\ref{sec:setup}, we recall the min-max setup of \cite{ZhouZhuCurves} and \cite{KL}, and we define Configurations 1 and 2. In \S\ref{sec:second_variation}, we compute the second variation of $\cA^c$ for curves in Configurations 1 and 2. In \S\ref{sec:cutandpaste}, we outline the competitor sweepout constructed from a good graphical perturbation, and we apply this construction in \S\ref{sec:cutandpaste_apply} to curves in Configurations 1 and 2. In \S\ref{sec:comparison}, we recall the comparison result of \cite{dekster} and apply it to rule out Configuration 2 in some cases, which concludes the proof of Theorem \ref{thm:general_surface}. In \S\ref{sec:positive_curvature}, we construct a competitor sweepout using a non-graphical perturbation of a curve in Configuration 2 to prove Theorem \ref{thm:positive_curvature}. Finally, in \S\ref{sec:index}, we prove the index bounds in Theorems \ref{thm:general_surface} and \ref{thm:positive_curvature}.

\begin{acknowledgements}
    The authors are grateful to their advisor Fernando Cod{\'a} Marques for several useful discussions. The authors are also grateful to Daniel Ketover for suggesting this subject and for his interest in this work.

    D.S.\ was supported by an NDSEG Fellowship.
\end{acknowledgements}

\section{Examples}\label{sec:examples}
We explore a few examples to characterize the sharpness of Theorem \ref{thm:general_surface}.

\subsection{Flat tori}
Let $\Lambda$ be a rank 2 lattice in $\R^2$. Without loss of generality we have $(0,0) \in \Lambda$ and (by rotating) $(0, 2r) \in \Lambda$ is the shortest nonzero vector in $\Lambda$ for some $r > 0$. Then $\R^2/\Lambda$ is a flat torus with injectivity radius $r$.

If $c > 1/r$, then $B_{1/c}(x_0)$ is compactly contained in the segment domain of $\R^2/\Lambda$ at $x_0 \in \R^2/\Lambda$, and its boundary is an embedded loop with constant curvature $c$.

Now suppose $c \leq 1/r$, and let $\gamma \subset \R^2/\Lambda$ be a closed curve of constant curvature $c$. Then $\gamma$ lifts to a circle $C$ of radius $1/c$ in $\R^2$. Since $1/c \geq r$, there are two points $(x_1, y_1),\ (x_2, y_2) \in C$ satisfying
\[ x_1 = x_2,\ y_1 = y_2 - 2r. \]
Hence, $(x_1, y_1)$ and $(x_2, y_2)$ descend to the same point in $\R^2/\Lambda$, so $\gamma$ is not embedded.

In these examples, there is a closed embedded curve of constant curvature $c$ if and only if
\[ c > \frac{1}{\mathrm{inj}(\R^2/\Lambda)}. \]
Hence, Theorem \ref{thm:general_surface} is sharp for flat tori.

\subsection{Hyperbolic surfaces}
We recall two facts about closed hyperbolic surfaces.

First, there is no closed curve of geodesic curvature 1. A simple proof of this fact is contained in \cite[Example 3.7]{Ginzburg}.

Second, there is a sequence of closed hyperbolic surfaces $(M_i, g_i)$ with $\mathrm{inj}(M_i, g_i) \to \infty$. There are several different constructions of such examples, for instance \cite[Theorem A]{Buser} (see also \cite{BS}).

For a sequence as above, we have
\[ \lim_{i \to \infty} \coth(\mathrm{inj}(M_i, g_i)) = 1. \]
Hence, Theorem \ref{thm:general_surface} is sharp for smooth closed hyperbolic surfaces of large injectivity radius and $c$ large.

\subsection{Spheres}
Consider the $C^{1,1}$ metric $g^*$ on $S^2$ given by gluing two radius 1 hemispheres in $\R^3$ to the boundary components of a cylinder of radius $1$ and length $L > \pi$ in $\R^3$. Then $0 \leq K_{g^*} \leq 1$ in the weak sense and $\mathrm{inj}(S^2, g^*) = \pi$\footnote{We can also perturb the metric $g^*$ to a smooth one that is arbitrarily close to satisfying these conditions.}.

By taking a circle of radius $\pi$ in the cylinder region, there is a closed almost embedded curve of constant curvature $1/\pi$ that has a self touching point in $(S^2, g^*)$.

Since our method in Theorem \ref{thm:general_surface} only finds cases where such configurations cannot exist for geometric reasons (without using further information such as the fact that it must achieve the min-max width), this example shows that Theorem \ref{thm:general_surface} is sharp for our method.

\section{Setup}\label{sec:setup}
Let $(M^2, g)$ be a smooth closed oriented Riemannian surface, and let $c > 0$.

Throughout the paper, we use the following terminology and convention for the curvature of curves. Let $\Omega\subset M$ be a set with smooth boundary, and let $N$ be the outward pointing normal unit vector field along $\partial \Omega$. We say $\partial \Omega$ has curvature $k : \partial \Omega \to \R$ with respect to $\Omega$ if $\kappa_{\partial \Omega} = kN$. To fix conventions, the circle of radius 1 has curvature 1 with respect to the ball of radius 1 in $\R^2$ (i.e. $\kappa_{\partial B_1}$ is the outward pointing unit normal vector field along $\partial B_1$).

For any set of finite perimeter $\Omega$, we define
\[ \cA^c(\Omega) := \cH^1(\partial \Omega) - c\cH^2(\Omega). \]

A family of sets of finite perimeter $\{\Omega_t\}_{t \in [0,1]}$ is a \emph{sweepout} if
\begin{itemize}
    \item $\Omega_0 = \varnothing$, $\Omega_1 = M$,
    \item $t \mapsto \mathbf{1}_{\Omega_t}$ is continuous in $L^1$,
    \item $t\mapsto\partial \Omega_t$ is continuous in the flat topology.
\end{itemize}

The \emph{first $c$ min-max width} of $(M, g)$ is
\[ W_c := \inf\left\{\sup_{t \in [0,1]} \cA^c(\Omega_t) \mid \{\Omega_t\}_{t\in[0,1]} \text{\ is\ a\ sweepout}\right\}. \]

The starting point for min-max in this setting is the following structure theorem of \cite{ZhouZhuCurves}.

\begin{theorem}\cite[Theorem 1.1]{ZhouZhuCurves}\label{thm:zz}
    There is a set of finite perimeter $\Omega \subset M$ and at most finitely many points $\{p_i\}_{i=1}^k \subset M$ so that
    \begin{itemize}
        \item $\cA^c(\Omega) = W_c$,
        \item away from $\{p_i\}_{i=1}^k$, $\partial \Omega$ is a smooth almost embedded curve of constant curvature $c$ with respect to $\Omega$,
        \item the density of $\partial \Omega$ at $p_i$ is an integer, and every tangent cone of $\partial \Omega$ at $p_i$ is a stationary geodesic network that is smooth away from the origin.
    \end{itemize}
\end{theorem}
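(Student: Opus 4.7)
The plan is to implement the Almgren--Pitts discrete min-max scheme adapted to the functional $\cA^c$. Starting from a minimizing sequence of continuous sweepouts for $W_c$, I would first discretize: for fineness parameters $\eta_i \to 0$, approximate each sweepout by a combinatorial family $\{\Omega_t^i\}_{t \in I^i}$ on a cubical subdivision of $[0,1]$, arranged so the discrete widths still converge to $W_c$. A pull-tight deformation then produces a new minimizing sequence whose max slices have subsequential varifold limits $V$ that are \emph{stationary for $\cA^c$}: the first variation of $V$ equals $c$ times integration against the outward normal of a limit set of finite perimeter $\Omega$ with $\cA^c(\Omega) = W_c$. This identifies $V$ as an integral $1$-varifold of constant generalized geodesic curvature $c$ with respect to $\Omega$. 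The continuity of $\Omega \mapsto c\cH^2(\Omega)$ in the $L^1$ topology is what allows the volume term to be treated in the same framework as the perimeter term throughout.

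Second, I would establish Pitts' almost-minimizing property in annuli. If at some $p \in \spt V$ no subsequence of slices were almost-minimizing in any small annulus around $p$, a standard combinatorial interpolation would produce local $\cA^c$-decreasing competitors, yielding a sweepout with strictly smaller max-$\cA^c$ and contradicting the definition of $W_c$. The $c\cH^2$ term is of lower order at small scales, since the area of a ball of radius $r$ is $O(r^2)$ while the perimeter is $O(r)$, so the classical interpolation goes through with harmless corrections absorbed into the $\eps$-improvement budget.

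Third, and where I expect the main obstacle to lie, is regularity. Almost-minimizing together with constant generalized curvature $c$ should imply, via the regularity theory for one-dimensional prescribed-curvature varifolds in a $2$-dimensional ambient, that $\spt V$ is a smooth curve of constant curvature $c$ with respect to $\Omega$ outside a finite exceptional set $\{p_i\}$. Almost-embeddedness (transverse self-touching with no crossing) follows from the one-sided maximum principle applied at any touching point, since both sheets share the same $c$-curvature relative to $\Omega$ and so must agree locally. At the remaining exceptional points $p_i$, I would use a $c$-weighted monotonicity formula to extract tangent cones; since the prescribed curvature term is subcritical under blow-up, tangent cones are stationary $1$-varifolds in $\R^2$, and by constancy these are stationary geodesic networks smooth away from the origin. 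Density integrality at $p_i$ then follows from the density lower bound for integral varifolds combined with the upper bound coming from monotonicity. The delicate step is the regularity theorem itself, in particular ruling out interior cusps or branch points beyond the finite singular set and bootstrapping from $C^{1,\alpha}$ to smoothness via the constant-curvature ODE, which is the substantive technical content of \cite{ZhouZhuCurves} beyond the classical Almgren--Pitts template.
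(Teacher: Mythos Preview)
The paper does not prove this theorem: it is quoted verbatim as \cite[Theorem 1.1]{ZhouZhuCurves} and used as a black box, so there is no ``paper's own proof'' to compare against. Your outline is a reasonable high-level sketch of the Almgren--Pitts scheme as carried out in \cite{ZhouZhuCurves}, and the structure (discretization, pull-tight, almost-minimizing in annuli, regularity via replacements and monotonicity, tangent-cone analysis) is correct in spirit. If anything, you are underselling the regularity step: in dimension one the replacement/gluing argument and the classification of stationary $1$-cones are more elementary than in higher dimensions, but the almost-embeddedness conclusion and the control on the number of singular points require the specific analysis in \cite{ZhouZhuCurves}, not just the general Almgren--Pitts template.
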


We recall that a curve is \emph{almost embedded} if any self-intersection is tangential and the curve is locally a union of smooth segments that do not cross each other.

This structure theorem was refined using a cut-and-paste argument in \cite{KL}.

\begin{theorem}\cite[Theorem 1.3]{KL}\label{thm:kl}
    There is a set of finite perimeter $\Omega \subset M$ and at most one point $x_0 \in M$ so that
    \begin{itemize}
        \item $\cA^c(\Omega) = W_c$,
        \item away from $x_0$, $\partial \Omega$ is a smooth almost embedded curve of constant curvature $c$ with respect to $\Omega$,
        \item $\partial \Omega$ is the image of a $C^{1,1}$ immersion with a transverse self-intersection at $x_0$.
    \end{itemize}
\end{theorem}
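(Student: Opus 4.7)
The plan is to start from the set $\Omega$ produced by Theorem~\ref{thm:zz} and run a cut-and-paste scheme at each singular point $p_i$ that strictly decreases $\cA^c$ unless the singular structure of $\partial\Omega$ is already as in the statement. The first step is the local classification of tangent cones. Stationarity forces each tangent cone at $p_i$ to be a finite union of rays through the origin whose unit tangent vectors sum to zero. Degree one is ruled out by the balancing condition. Degree two forces the two tangents to be antipodal, so the two branches of $\partial\Omega$ meet $C^1$ at $p_i$ with the same prescribed curvature $c$ on each side; a standard Schauder bootstrap for the equation $\kappa=cN$ then promotes the junction to $C^{1,1}$ across $p_i$, so $p_i$ is not genuinely singular and can be discarded from the list. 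We may therefore assume every remaining $p_i$ has tangent cone of degree at least three.

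The second and main step is to rule out every tangent cone except the transverse crossing, i.e.\ two antipodal pairs of rays meeting at some nonzero angle. Given $p_i$ whose tangent cone is not of this form, I would construct a competitor $\Omega'$ that coincides with $\Omega$ outside a small ball $B_\eps(p_i)$ and replaces $\partial\Omega\cap B_\eps(p_i)$ by a \emph{pairing resolution}: each incoming arc is glued to a neighboring arc into a $C^{1,1}$ piece in the unique combinatorial way consistent with which side of each incoming arc belongs to $\Omega$, followed by a short smoothing step to restore the condition $\kappa=cN$ at leading order. The crucial estimate is essentially the Steiner-tree inequality on the tangent cone: inside $B_\eps(p_i)$ the length changes by $-\lambda_i\,\eps+O(\eps^2)$ with $\lambda_i>0$ measuring the angular defect from the transverse-crossing model, while the enclosed area changes by $O(\eps^2)$. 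Hence $\cA^c(\Omega')<\cA^c(\Omega)=W_c$ for $\eps$ small, contradicting minimality. The degree-three $120^\circ$ star, every degree-four non-transverse configuration, and every tangent cone of degree $\geq 5$ fall under this analysis.

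The third step is uniqueness of the singular point. After Step two, every remaining $p_i$ is a transverse crossing and $\partial\Omega$ is locally the image of a $C^{1,1}$ immersion with simple self-intersections. If two distinct crossings $p_1\neq p_2$ survived, a suitably chosen simultaneous resolution at $p_1$ and $p_2$, glued using the global immersed structure and smoothed to restore $\kappa=cN$, would again produce a competitor $\Omega'$ with strictly smaller $\cA^c$, a contradiction. The hardest part of the whole argument is Step two: the desingularization must be set up carefully enough that $\Omega'$ is a legitimate set of finite perimeter whose $\cA^c$ value can be compared with $W_c$ at leading order, and the combinatorics of how to pair incoming arcs must be treated case by case in order to guarantee a strictly negative first-order length change in every non-generic tangent cone (and in particular to rule out symmetric borderline configurations where the leading-order gain could vanish).
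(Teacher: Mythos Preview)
The paper does not prove Theorem~\ref{thm:kl}; it quotes the result from \cite{KL} and only remarks that it is obtained by ``refining'' Theorem~\ref{thm:zz} via a cut-and-paste argument. So there is no proof here to compare against directly, but the cut-and-paste philosophy underlying \cite{KL} is visible later in \S\ref{sec:cutandpaste} of this paper, and it differs from your outline in an essential way.

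The genuine gap in your proposal is that $\Omega$ is a min-max critical set, not a minimizer. Producing a single competitor $\Omega'$ with $\cA^c(\Omega') < \cA^c(\Omega) = W_c$ does not yield any contradiction: $W_c$ is an $\inf\sup$ over sweepouts, not an infimum over sets, and there are plenty of sets with $\cA^c$ below $W_c$ (for instance $\varnothing$). Your phrase ``contradicting minimality'' at the end of Step~2, and the analogous conclusion in Step~3, is therefore unjustified. What is actually needed is a competitor \emph{sweepout}: a continuous path $\{\Omega_t\}_{t\in[0,1]}$ from $\varnothing$ to $M$ with $\sup_t \cA^c(\Omega_t) < W_c$. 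This is precisely the structure of the arguments in \S\ref{sec:cutandpaste} (see Theorem~\ref{thm:cut_and_paste_sweepout} and Figures~\ref{fig:sweepout}--\ref{fig:profile}): one resolves the singularity in \emph{two opposite} ways, $\Omega^+_{s,r}$ and $\Omega^-_{s,r}$, both with $\cA^c$ strictly below $\cA^c(\Omega)$ for $(s,r)\neq(0,0)$, and then one still has to connect $\Omega^-$ to $\varnothing$ and $\Omega^+$ to $M$ while keeping $\cA^c$ controlled, which is where \cite[Lemma~3.2]{KL} is invoked. Your pairing-resolution idea is the right local ingredient, but it must be embedded in this global sweepout construction; one resolution $\Omega'$ alone is not enough.
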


We use this result as the starting point for our proof.

Let $\gamma: S^1 \to M$ be a $C^{1,1}$ arclength-parametrized immersion and let $\Omega \subset M$ be an open set so that
\begin{itemize}
    \item $\partial \Omega = \gamma(S^1)$,
    \item there are distinct points $t_0,\ t_1 \in S^1$ so that $\gamma\mid_{S^1 \setminus \{t_0, t_1\}}$ is a smooth almost embedding with curvature $c$ with respect to $\Omega$,
    \item there are disjoint neighborhoods $I_0,\ I_1 \subset S^1$ of $t_0$ and $t_1$ respectively so that $\gamma\mid_{I_0}$ and $\gamma\mid_{I_1}$ intersect transversely at $\gamma(t_0) = \gamma(t_1) = x_0 \in M$.
\end{itemize}

Let $\alpha \in (0, \pi)$ be the angle delimited by $\Omega$ at the self-intersection point $x_0$.

Let $N$ be the outward pointing unit normal vector field for $\Omega$ along $\gamma\mid_{S^1 \setminus \{t_0, t_1\}}$. Note that we have $\kappa_{\gamma} = cN$ on $S^1 \setminus \{t_0, t_1\}$. Let $\nu$ be a Lipschitz choice of unit normal vector field along $\gamma$. Let $I^+,\ I^- \subset S^1$ be the two components of $S^1 \setminus \{t_0, t_1\}$ so that $\nu\mid_{I^+} = N\mid_{I^+}$ and $\nu\mid_{I^-} = -N\mid_{I^-}$. Let $\gamma^{\pm} := \gamma\mid_{I^{\pm}}$.

\subsection{Configuration 1} $\gamma^+$ bounds a component of $\Omega$. Equivalently, $\Omega$ has two components. See the left image in Figure \ref{fig:configs} for an illustration.

\subsection{Configuration 2} $\gamma^+$ bounds a component of $M \setminus \Omega$. Equivalently, $\Omega$ is connected. See the right image in Figure \ref{fig:configs} for an illustration.

\begin{figure}
\centering
\begin{tikzpicture}
    \fill[gray!20!white] (-4, 0) .. controls (-3.5, .75) and (-2.5, 1) .. (-2, 1)
        .. controls (-1.5, 1) and (-1, .75) .. (-1, 0)
        .. controls (-1, -.75) and (-1.5, -1) .. (-2, -1)
        .. controls (-2.5, -1) and (-3.5, -.75) .. (-4, 0)
        .. controls (-4.5, .75) and (-5.5, 1) .. (-6, 1)
        .. controls (-6.5, 1) and (-7, .75) .. (-7, 0)
        .. controls (-7, -.75) and (-6.5, -1) .. (-6, -1)
        .. controls (-5.5, -1) and (-4.5, -.75) .. (-4, 0);
    \draw[thick] (-4, 0) .. controls (-3.5, .75) and (-2.5, 1) .. (-2, 1)
        .. controls (-1.5, 1) and (-1, .75) .. (-1, 0)
        .. controls (-1, -.75) and (-1.5, -1) .. (-2, -1)
        .. controls (-2.5, -1) and (-3.5, -.75) .. (-4, 0)
        .. controls (-4.5, .75) and (-5.5, 1) .. (-6, 1)
        .. controls (-6.5, 1) and (-7, .75) .. (-7, 0)
        .. controls (-7, -.75) and (-6.5, -1) .. (-6, -1)
        .. controls (-5.5, -1) and (-4.5, -.75) .. (-4, 0);
    \draw (-6, 0) node[left] {$\Omega$};
    \draw[ultra thick][->] (-7, 0) -- (-7.5, 0) node[above] {$\nu$};
    \draw[ultra thick][->] (-1, 0) -- (-1.5, 0) node[above] {$\nu$};
    \draw (-6, 1) node[above] {$\gamma^+$};
    \draw (-2, 1) node[above] {$\gamma^-$};
    \draw (-3.5,0) arc [start angle = 0, end angle = 50, radius = .5];
    \draw (-3.5,0) arc [start angle = 0, end angle = -50, radius = .5];
    \draw (-3.5, 0) node[right] {$\alpha$};
    \draw (-4, -2) node {Configuration 1};

    \fill[gray!20!white] (2, .75) .. controls (2.25, .75) and (2.75, .6) .. (3, .5)
        .. controls (3.25, .4) and (3.5, .3) .. (4, 0)
        .. controls (3.5, -.3) and (3.25, -.4) .. (3, -.5)
        .. controls (2.75, -.6) and (2.25, -.75) .. (2, -.75);
    \fill[gray!20!white] (6, .75) .. controls (5.75, .75) and (5.25, .6) .. (5, .5)
        .. controls (4.75, .4) and (4.5, .3) .. (4, 0)
        .. controls (4.5, -.3) and (4.75, -.4) .. (5, -.5)
        .. controls (5.25, -.6) and (5.75, -.75) .. (6, -.75);
    \draw[thick] (2, .75) .. controls (2.25, .75) and (2.75, .6) .. (3, .5)
        .. controls (3.25, .4) and (3.5, .3) .. (4, 0)
        .. controls (4.5, -.3) and (4.75, -.4) .. (5, -.5)
        .. controls (5.25, -.6) and (5.75, -.75) .. (6, -.75);
    \draw[thick] (2, -.75) .. controls (2.25, -.75) and (2.75, -.6) .. (3, -.5)
        .. controls (3.25, -.4) and (3.5, -.3) .. (4, 0)
        .. controls (4.5, .3) and (4.75, .4) .. (5, .5)
        .. controls (5.25, .6) and (5.75, .75) .. (6, .75);
    \draw[dashed] (2, .75) .. controls (1, .9) and (2, 1.25) .. (4, 1.25)
        .. controls (6, 1.25) and (7, .9) .. (6, .75);
    \draw[dashed] (2, -.75) .. controls (1, -.9) and (2, -1.25) .. (4, -1.25)
        .. controls (6, -1.25) and (7, -.9) .. (6, -.75);
    \draw (3, 0) node[left] {$\Omega$};
    \draw[ultra thick][->] (5, .5) -- (4.82, .95) node[left] {$\nu$};
    \draw[ultra thick][->] (5, -.5) -- (5.18, -.05) node[right] {$\nu$};
    \draw (2.75, .9) node {$\gamma^+$};
    \draw (2.75, -.85) node {$\gamma^-$};
    \draw (4.5,0) arc [start angle = 0, end angle = 30, radius = .5];
    \draw (4.5,0) arc [start angle = 0, end angle = -30, radius = .5];
    \draw (4.5, 0) node[right] {$\alpha$};
    \draw (4, -2) node {Configuration 2};
\end{tikzpicture}
\caption{An illustration of the two possible configurations.}
\label{fig:configs}
\end{figure}
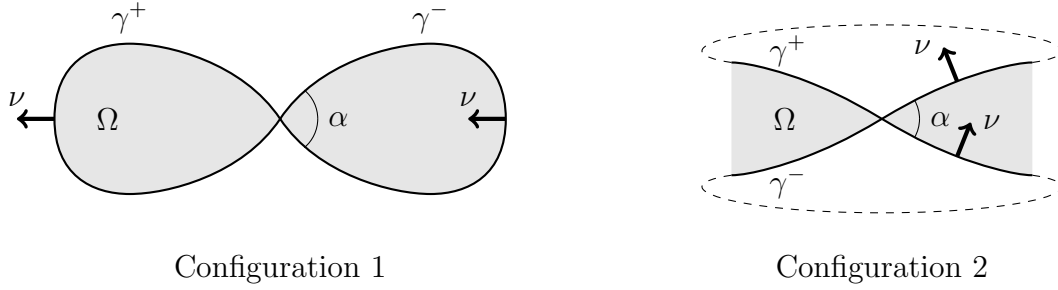

By allowing the angle $\alpha = \pi$ in Configuration 2, we note that Configuration 2 includes the case of a smooth closed almost embedded curve of constant curvature $c$ with at least one self-touching point. Hence, we will rule out the smooth almost embedded case when we rule out Configuration 2.

\section{Second Variation Formula}\label{sec:second_variation}
Let $X$ be a Lipschitz vector field along $\gamma$. Let $\Gamma : [0, \eps) \times S^1 \to M$ be defined as
\[ \Gamma(s, t) := \exp_{\gamma(t)}(sX(t)). \]
Note that $\Gamma$ satisfies
\begin{align*}
    \Gamma(0, t) & = \gamma(t)\\
    \partial_s\Gamma(0, t) & = X(t).
\end{align*}
We define
\begin{itemize}
    \item $\gamma_s(t) := \Gamma(s, t)$,
    \item $X_s(t) := \partial_s \Gamma(s, t)$,
    \item $T_s$ is the integer $2$-current given by $T_s := [\Omega] + \Gamma_{\#}[[0, s) \times S^1]$.
\end{itemize} 

Let $x_s \in M$ and $t_0(s),\ t_1(s) \in S^1$ so that
\begin{itemize}
    \item $\lim_{s \to 0} x_s = x_0$, the self-intersection point of $\gamma$,
    \item $s \mapsto x_s$ is a continuous curve in $M$,
    \item $t_0(0) = t_0$, $t_1(0) = t_1$,
    \item $s \mapsto t_0(s)$ and $s \mapsto t_1(s)$ are continuous curves in $S^1$,
    \item $\gamma_s^{-1}(x_s) = \{t_0(s), t_1(s)\}$.
\end{itemize}

Let $N_s$ be a Lipschitz unit normal vector field along $\gamma_s\mid_{S^1\setminus\{t_0(s), t_1(s)\}}$ so that $N_0 = N$ and $N_s(t)$ is continuous in $s$. Let $\nu_s$ be a Lipschitz choice of unit normal vector field along $\gamma_s$ so that $\nu_0 = \nu$ and $\nu_s(t)$ is continuous in $s$. Note that $\nu_s\mid_{I_s^+} = N_s\mid_{I_s^+}$ and $\nu_s\mid_{I_s^-} = -N_s\mid_{I_s^-}$ where $I_s^{\pm}$ are the components of $S^1 \setminus \{t_0(s), t_1(s)\}$. Let $\gamma_s^{\pm} := \gamma_s\mid_{I_s^{\pm}}$.

\begin{lemma}[First Variation]\label{lem:first_variation}
    \begin{equation}\label{eqn:first_variation}
        \frac{d}{ds} (\mathrm{length}(\gamma_s) - c\M(T_s)) = \int \frac{\langle X_s, \kappa_{\gamma_s} - cN_s\rangle}{|\partial_t \gamma_s|}\ dt.
    \end{equation}
\end{lemma}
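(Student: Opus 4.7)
The plan is a routine first variation calculation, applied separately to the length and mass terms, which are then combined. The only nonstandard feature is handling the $C^{1,1}$ regularity of $\gamma$ and the transverse self-intersection at $x_0$.

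For the length term, I would differentiate $\mathrm{length}(\gamma_s) = \int_{S^1} |\partial_t \gamma_s|\,dt$ using the torsion-free identity $\nabla_s \partial_t \Gamma = \nabla_t \partial_s \Gamma = \nabla_t X_s$, which gives
\[
\frac{d}{ds}|\partial_t \gamma_s| = \frac{\langle \nabla_t X_s,\, \partial_t \gamma_s\rangle}{|\partial_t \gamma_s|}.
\]
Integrating over the closed curve $S^1$ and then integrating by parts (with no boundary contributions since $S^1$ is closed) yields $-\int \langle X_s,\, \nabla_t \tau_s\rangle\,dt$, where $\tau_s = \partial_t\gamma_s/|\partial_t\gamma_s|$ is the unit tangent. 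Rewriting $\nabla_t \tau_s$ in terms of the paper's curvature vector $\kappa_{\gamma_s}$ produces the $\kappa_{\gamma_s}$ contribution to the right-hand side of (\ref{eqn:first_variation}).

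For the mass term, $T_s$ differs from $T_0 = [\Omega]$ by the swept current $\Gamma_\#[[0,s)\times S^1]$. By the first variation of area (equivalently, the flux formula for currents), the derivative of $\M(T_s)$ at time $s$ equals the flux of the velocity $X_s$ across $\gamma_s$,
\[
\frac{d}{ds}\M(T_s) = \int \langle X_s, N_s\rangle\,|\partial_t\gamma_s|\,dt,
\]
where $N_s$ is the outward unit normal, defined on the complement of the two self-intersection parameters. The tangential component of $X_s$ contributes nothing to first order because sliding along $\gamma_s$ does not change the swept region. Multiplying by $-c$ and adding to the length variation gives (\ref{eqn:first_variation}).

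The main obstacle is justifying both computations under the low regularity. Because $\gamma$ is $C^{1,1}$, the unit tangent $\tau_s$ is Lipschitz, so $\nabla_t \tau_s$ exists almost everywhere and the integration by parts on $S^1$ is valid in the absolutely continuous sense. The two self-intersection parameters $\{t_0(s), t_1(s)\}$ form a measure-zero set and contribute nothing to the integrals. The flux formula for the mass remains valid because $\Gamma$ is smooth in $s$ and Lipschitz in $t$, so the signed area swept out to first order agrees with the flux even when $\gamma_s$ fails to be embedded near the self-intersection.
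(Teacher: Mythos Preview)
Your proposal is correct and follows the same approach as the paper, which simply declares the formula standard and cites \cite[Lemma 2.1]{BdCE} without further argument. You have supplied the details the paper omits: the standard first variation of length via integration by parts on the closed parameter circle, the flux formula for the swept mass, and the observation that the $C^{1,1}$ regularity and the measure-zero self-intersection parameters cause no trouble.
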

\begin{proof}
    This formula is standard, see for instance \cite[Lemma 2.1]{BdCE}.
\end{proof}

\begin{lemma}[Second Variation]\label{lem:second_variation}
    Let $X(t) = \phi(t)\nu(t)$ for a Lipschitz function $\phi$ along $\gamma$.
    
    If $\gamma$ is in Configuration 1, then
    \begin{align}\label{eqn:second_variation_config1}
        \frac{d^2}{ds^2}\Big|_{s=0} & (\mathrm{length}(\gamma_s) - c\M(T_s))\\
        & = \int (|\phi'|^2 - (K_g + c^2)\phi^2)\ dt - \frac{2c}{\sin\alpha}\big((\phi(t_0)^2 + \phi(t_1)^2)\cos\alpha + 2\phi(t_0)\phi(t_1)\big).\notag
    \end{align}
    
    If $\gamma$ is in Configuration 2, then
    \begin{align}\label{eqn:second_variation_config2}
        \frac{d^2}{ds^2}\Big|_{s=0} & (\mathrm{length}(\gamma_s) - c\M(T_s))\\
        & = \int (|\phi'|^2 - (K_g + c^2)\phi^2)\ dt - \frac{2c}{\sin\alpha}\big((\phi(t_0)^2 + \phi(t_1)^2)\cos\alpha - 2\phi(t_0)\phi(t_1) \big).\notag
    \end{align}
\end{lemma}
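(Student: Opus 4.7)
The plan is to differentiate the first-variation identity of Lemma~\ref{lem:first_variation} once more at $s=0$. Since $\kappa_{\gamma_0}-cN_0\equiv 0$ on $S^1\setminus\{t_0,t_1\}$, most of the terms that arise from differentiating the integrand drop out at $s=0$, and the second derivative decomposes into a smooth interior contribution and a singular contribution concentrated at the moving self-intersection point $x_s=\gamma_s(t_0(s))=\gamma_s(t_1(s))$.

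For the interior contribution, I would apply the classical smooth-curve second-variation formulas (compare \cite{BdCE}). A direct calculation in arclength at $s=0$ gives $L''(0)=\int(|\phi'|^2-K_g\phi^2)\,dt$ for a normal variation $X=\phi\nu$. For the area, the only surviving term comes from the variation of arclength density, $\partial_s|\partial_t\gamma_s||_{s=0}=\phi\langle\kappa_\gamma,\nu\rangle$, which when paired with $\langle X,N\rangle=\phi\langle\nu,N\rangle$ yields $\M(T_s)''|_{s=0}=\int c\phi^2\langle\nu,N\rangle^2\,dt=c\int\phi^2\,dt$, since $\kappa_\gamma=cN$ and $\langle\nu,N\rangle=\pm 1$. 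Subtracting gives the interior piece $\int(|\phi'|^2-(K_g+c^2)\phi^2)\,dt$ common to both formulas.

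The singular contribution arises because $x_s$ moves, so $\M(T_s)$ picks up a second-order piece near $x_s$ that is invisible to the smooth interior analysis. I would compute it by first linearizing the crossing equation $\gamma_s(t_0(s))=\gamma_s(t_1(s))$ in normal coordinates at $x_0$: using $\{T(t_0),T(t_1)\}$ as a basis of $T_{x_0}M$ (a basis by transversality, with $|\det|=\sin\alpha$), Cramer's rule yields $x_s-x_0$ at order $s$ as an explicit linear combination of $\phi(t_0)\nu(t_0)$ and $\phi(t_1)\nu(t_1)$ with $1/\sin\alpha$ in the denominator. The second-order area contribution is then the signed area of the ``butterfly'' region near $x_s$ bounded by the old and the shifted branches; this is a quadratic form in $\phi(t_0)$ and $\phi(t_1)$ whose coefficients involve $\cos\alpha$ and $1/\sin\alpha$, producing a corner term of the shape $-\tfrac{2c}{\sin\alpha}\bigl((\phi(t_0)^2+\phi(t_1)^2)\cos\alpha\pm 2\phi(t_0)\phi(t_1)\bigr)$.

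The sign of the cross-term $\pm 2\phi(t_0)\phi(t_1)$ is determined by the global configuration of $\Omega$ at the crossing. In Configuration~1, the two components of $\Omega$ occupy one pair of opposite sectors at $x_0$, so the two sheets of the sweepout $\Gamma_\#[[0,s)\times S^1]$ overlap coherently near $x_s$, giving $+2\phi(t_0)\phi(t_1)$. In Configuration~2, the connected $\Omega$ uses the opposite pair of sectors, reversing the relative orientation of the two sheets and giving $-2\phi(t_0)\phi(t_1)$. Adding the corner term to the interior integral yields the two stated formulas. The main obstacle will be the orientation and multiplicity bookkeeping in the butterfly computation: the sign of the cross-term has to be traced to the global topology of $\Omega$ at the crossing rather than to any purely local data, and it is this bookkeeping (rather than any hard estimate) that distinguishes the two configurations.
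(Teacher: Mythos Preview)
Your interior computation matches the paper exactly: both obtain $\int(|\phi'|^2-K_g\phi^2)\,dt$ from the length piece and $-c^2\int\phi^2\,dt$ from the smooth part of the area piece.

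For the corner term the paper takes a cleaner route than your direct ``butterfly area'' computation, and it sidesteps precisely the orientation/multiplicity bookkeeping you identify as the obstacle. Instead of expanding $\M(T_s)$ to second order geometrically, the paper differentiates the first-variation integrand once more. The point is that $\langle\nu_s,N_s\rangle=\pm1$ on $I_s^{\pm}$, so the area part of the first variation is
\[
-c\int\frac{\langle\phi\nu_s,N_s\rangle}{|\partial_t\gamma_s|}\,dt
=-c\Bigl(\int_{I_s^+}\frac{\phi}{|\partial_t\gamma_s|}\,dt-\int_{I_s^-}\frac{\phi}{|\partial_t\gamma_s|}\,dt\Bigr),
\]
a pair of one-dimensional integrals over \emph{moving} intervals. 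Differentiating in $s$ (first variation with moving endpoints) gives the interior term $-c^2\int\phi^2$ together with explicit boundary terms $2c\bigl(\phi(t_0)\langle V,\gamma'(t_0)\rangle-\phi(t_1)\langle V,\gamma'(t_1)\rangle\bigr)$, where $V=\partial_s|_{s=0}x_s$ is the velocity of the crossing. The configuration dependence then reduces to linear algebra at $x_0$: solving for $V$ in the basis $\{\gamma'(t_0),\gamma'(t_1)\}$ (your Cramer step) yields $V=\tfrac{1}{\sin\alpha}(\mp\phi(t_1)\gamma'(t_0)\pm\phi(t_0)\gamma'(t_1))$, and the two cases differ only because $\langle\gamma'(t_0),\gamma'(t_1)\rangle$ equals $-\cos\alpha$ in Configuration~1 and $+\cos\alpha$ in Configuration~2. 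Substituting gives the stated corner terms directly. Your butterfly picture would ultimately unwind to this same computation of $V$, but packaging the corner contribution as moving-endpoint boundary terms of curve integrals rather than as a signed $2$-dimensional area means there is no overlap region or sheet multiplicity to track---only the scalars $\langle V,\gamma'(t_i)\rangle$.
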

\begin{proof}
    By standard arguments (see \cite[Proposition 2.5]{BdCE}) and the assumptions on $\gamma$, we have
    \begin{equation}\label{eqn:variation_length}
        \frac{d}{ds}\Big|_{s=0}\int \frac{\langle \phi \nu_s, \kappa_{\gamma_s}\rangle}{|\partial_t\gamma_s|} dt
        = \int (|\phi'|^2 - K_g\phi^2)\ dt,
    \end{equation}

    We compute
    \begin{equation*}
        \frac{d}{ds}\Big|_{s=0} \int \frac{\langle \phi \nu_s, N_s\rangle}{|\partial_t \gamma_s|}\ dt = \frac{d}{ds}\Big|_{s=0}\int_{I_s^+} \frac{\phi}{|\partial_t\gamma_s|}\ dt - \frac{d}{ds}\Big|_{s=0}\int_{I_s^-} \frac{\phi}{|\partial_t\gamma_s|}\ dt.
    \end{equation*}
    Without loss of generality we have $I^+ = (t_0, t_1)$ and $I^- = (t_1, t_0)$. Let $V := \partial_s\mid_{s=0} x_s$. By the standard first variation of length for curves with moving endpoints (see for instance \cite[Chapter 9]{docarmo}), we have
    \begin{align*}
        \frac{d}{ds}\Big|_{s=0} \int_{I_s^+} \frac{\phi}{|\partial_t\gamma_s|}\ dt
        & = c\int_{I^+} \phi^2\ dt -\phi(t_0)\langle V, \gamma'(t_0) \rangle + \phi(t_1)\langle V, \gamma'(t_1)\rangle\\
        \frac{d}{ds}\Big|_{s=0} \int_{I_s^-} \frac{\phi}{|\partial_t \gamma_s|}\ dt
        & = -c\int_{I^-} \phi^2\ dt + \phi(t_0)\langle V, \gamma'(t_0) \rangle - \phi(t_1)\langle V, \gamma'(t_1)\rangle.
    \end{align*}
    Hence, we have
    \begin{align}\label{eqn:variation_area}
        -c\frac{d}{ds}\Big|_{s=0} \int \frac{\langle \phi \nu_s, N_s\rangle}{|\partial_t \gamma_s|}\ dt =
        -\int c^2\phi^2\ dt +
        2c(\phi(t_0)\langle V, \gamma'(t_0) \rangle - \phi(t_1)\langle V, \gamma'(t_1)\rangle).
    \end{align}

    In Configuration 1, trigonometric considerations give
    \begin{equation}\label{eqn:variation_node_config1}
        V = \frac{1}{\sin\alpha}(-\phi(t_1)\gamma'(t_0) + \phi(t_0)\gamma'(t_1)),\ \ \text{and}\ \ 
        \langle \gamma'(t_0), \gamma'(t_1)\rangle = -\cos\alpha.
    \end{equation}
    
    In Configuration 2, trigonometric considerations give
    \begin{equation}\label{eqn:variation_node_config2}
        V = \frac{1}{\sin\alpha}(\phi(t_1)\gamma'(t_0) - \phi(t_0)\gamma'(t_1)),\ \ \text{and}\ \ 
        \langle \gamma'(t_0), \gamma'(t_1)\rangle = \cos\alpha.
    \end{equation}

    For clarity, we refer the reader to Figure \ref{fig:trig} to see (\ref{eqn:variation_node_config1}) and (\ref{eqn:variation_node_config2}).

    Together, Lemma \ref{lem:first_variation} with (\ref{eqn:variation_length}), (\ref{eqn:variation_area}), (\ref{eqn:variation_node_config1}), and (\ref{eqn:variation_node_config2}) conclude the proof.
\end{proof}

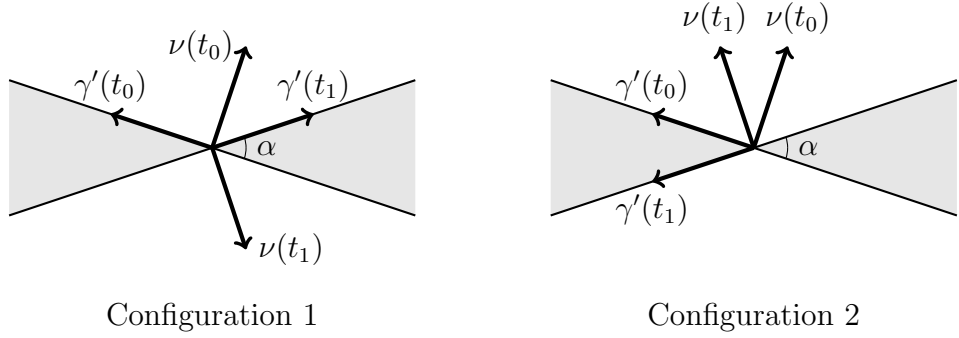
\begin{figure}
\centering
\begin{tikzpicture}[scale=.9]
    \fill[gray!20!white] (-7, 1) -- (-4, 0) -- (-7, -1);
    \fill[gray!20!white] (-1, 1) -- (-4, 0) -- (-1, -1);
    \draw[thick] (-7, 1) -- (-1, -1);
    \draw[thick] (-7, -1) -- (-1, 1);
    \draw (-3.5,0) arc [start angle = 0, end angle = 18.4, radius = .5];
    \draw (-3.5,0) arc [start angle = 0, end angle = -18.4, radius = .5];
    \draw (-3.5, 0) node[right] {$\alpha$};
    \draw[ultra thick][->] (-4,0) -- (-5.5, .5) node[above] {$\gamma'(t_0)$};
    \draw[ultra thick][->] (-4,0) -- (-2.5, .5) node[above] {$\gamma'(t_1)$};
    \draw[ultra thick][->] (-4, 0) -- (-3.5, 1.5) node[left] {$\nu(t_0)$};
    \draw[ultra thick][->] (-4, 0) -- (-3.5, -1.5) node[right] {$\nu(t_1)$};
    \draw (-4, -2.5) node {Configuration 1};

    \fill[gray!20!white] (1, 1) -- (4, 0) -- (1, -1);
    \fill[gray!20!white] (7, 1) -- (4, 0) -- (7, -1);
    \draw[thick] (1, 1) -- (7, -1);
    \draw[thick] (1, -1) -- (7, 1);
    \draw (4.5,0) arc [start angle = 0, end angle = 18.4, radius = .5];
    \draw (4.5,0) arc [start angle = 0, end angle = -18.4, radius = .5];
    \draw (4.5, 0) node[right] {$\alpha$};
    \draw[ultra thick][->] (4,0) -- (2.5, .5) node[above] {$\gamma'(t_0)$};
    \draw[ultra thick][->] (4,0) -- (2.5, -.5) node[below] {$\gamma'(t_1)$};
    \draw[ultra thick][->] (4, 0) -- (4.5, 1.5) node[above] {\ \ $\nu(t_0)$};
    \draw[ultra thick][->] (4, 0) -- (3.5, 1.5) node[above] {$\nu(t_1)$\ \ };
    \draw (4, -2.5) node {Configuration 2};
\end{tikzpicture}
\caption{Trigonometry considerations in Lemma \ref{lem:second_variation}.}
\label{fig:trig}
\end{figure}

\section{Cut-and-Paste Sweepout}\label{sec:cutandpaste}
Suppose $\gamma$, $\Omega$, $\gamma_s$, and $T_s$ are as in the previous section. In this section, we prove the following result.

\begin{theorem}\label{thm:cut_and_paste_sweepout}
    Suppose there is a function $\phi$ so that
    \[ \frac{d^2}{ds^2}\Big|_{s=0} (\mathrm{length}(\gamma_s) - c\M(T_s)) < 0. \]
    Then $\cA^c(\Omega)$ is not the first $c$-min-max width of $(M, g)$.
\end{theorem}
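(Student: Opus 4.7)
The plan is to argue by contradiction: assuming $\cA^c(\Omega) = W_c$, I will exhibit a sweepout of $M$ whose supremal $\cA^c$ value is strictly less than $\cA^c(\Omega)$. The first step is to extract a strict decrease along the perturbation. Since $\gamma$ has constant geodesic curvature $c$ with respect to $\Omega$ away from the node $x_0$, the integrand $\langle X, \kappa_\gamma - cN\rangle/|\partial_t \gamma|$ in the first variation formula (\ref{eqn:first_variation}) vanishes at $s=0$, so the first derivative of $s \mapsto \mathrm{length}(\gamma_s) - c\,\M(T_s)$ is zero at $0$. Combined with the hypothesis on the second derivative, Taylor expansion produces constants $s_0, \eta > 0$ such that
\[ \mathrm{length}(\gamma_s) - c\,\M(T_s) \leq \cA^c(\Omega) - \eta s^2 \quad \text{for all } s \in (0, s_0]. \]

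Next, I would resolve the (possible) transverse self-intersection of $\gamma_s$. For small $s > 0$, $\gamma_s$ is a $C^{1,1}$ immersion with at most one self-intersection, necessarily transverse and located within distance $O(s)$ of $x_0$. Performing a cut-and-paste in a small ball $B_\epsilon$ around the (near-)crossing, I would construct a continuous one-parameter family $\{\Sigma_r\}_{r \in [-\epsilon,\epsilon]}$ of sets of finite perimeter (continuous in both $L^1$ and the flat topology) such that $\Sigma_{\pm\epsilon}$ realize the two distinct embedded ``disjoint-arc'' resolutions of the crossing and $\Sigma_0$ matches the current $T_s$. Elementary length and area estimates on the surgery inside $B_\epsilon$ give $\cA^c(\Sigma_r) \leq \mathrm{length}(\gamma_s) - c\,\M(T_s) + C\epsilon$, so choosing $\epsilon$ small enough relative to $s$ yields
\[ \sup_{r \in [-\epsilon, \epsilon]} \cA^c(\Sigma_r) \leq \cA^c(\Omega) - \frac{\eta s^2}{2}. \]

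Finally, I would splice this family into an almost-optimal sweepout. Take $\{\Omega'_t\}_{t\in[0,1]}$ with $\sup_t \cA^c(\Omega'_t) \leq W_c + \tau$ for $\tau < \eta s^2/4$; by the min-max structure results of \cite{KL}, I may assume this sweepout passes through a configuration close to $\Omega$ in the flat topology at some time $t_*$. Picking $t_0 < t_* < t_1$ close to $t_*$, I would replace the segment $\{\Omega'_t\}_{t \in [t_0,t_1]}$ by the concatenation $\Omega'_{t_0} \to \Sigma_{-\epsilon} \to \{\Sigma_r\} \to \Sigma_\epsilon \to \Omega'_{t_1}$, where the two connecting paths are built by graphical interpolation within a single topological type near $\Omega$ (the ``joined'' resolution on one side of the node, the ``split'' one on the other). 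By continuity of $\cA^c$ and the flexibility in choosing $t_0, t_1$, the connecting paths can be arranged with $\cA^c < \cA^c(\Omega) - \eta s^2/4$, and the modified sweepout then has $\sup \cA^c < \cA^c(\Omega)$, contradicting $W_c = \cA^c(\Omega)$. The main obstacle will be this last splicing step: verifying that the almost-optimal sweepout admits sufficiently regular graphical approximations of $\Omega$ near the node $x_0$ so that the graphical interpolation with $\Sigma_{\pm\epsilon}$ preserves the required $\cA^c$ bound. I expect this to follow from the $C^{1,1}$ regularity of the min-max limit established in \cite{KL}, together with a careful continuity analysis on either side of $x_0$.
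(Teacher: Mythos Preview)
Your first two steps are essentially what the paper does: the strict decrease along the normal perturbation, the deletion of overlaps, and the one-parameter family of local resolutions $\{\Sigma_r\}$ correspond to the paper's $\Omega_s$, $\Omega_{s,r}^-$, and $\Omega_{s,r}^+$.

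The genuine gap is in the third step. You assume an almost-optimal sweepout $\{\Omega'_t\}$ can be arranged to pass through a configuration close to \emph{this particular} $\Omega$ in the flat topology, and moreover that nearby slices $\Omega'_{t_0},\Omega'_{t_1}$ are graphical over the two topological resolutions. Neither is available: the min-max results of \cite{KL} only assert that \emph{some} critical set with the stated structure exists, not that any given minimizing sequence of sweepouts approaches the specific $\Omega$ under consideration; and even if a slice were flat-close to $\Omega$, it is merely a set of finite perimeter with no a priori regularity or topological control, so graphical interpolation is not justified. You correctly flag this as the main obstacle, but the $C^{1,1}$ regularity of the limit $\Omega$ does nothing for the regularity of the sweepout slices.

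The paper avoids this entirely by never splicing into an existing sweepout. Instead it builds a complete sweepout from scratch: the two embedded resolutions $\Omega_{0,r_c}^-$ and $\Omega_{0,r_c}^+$ at $s=0$ each have a point of vanishing or non-$c$-convex boundary curvature, so \cite[Lemma 3.2]{KL} supplies a path from $\varnothing$ to $\Omega_{0,r_c}^-$ and a path from $\Omega_{0,r_c}^+$ to $M$, each with $\cA^c$ bounded by its endpoint value. The middle portion then runs $\Omega_{0,r_c}^- \to \Omega_{\eps,r_c}^- \to \Omega_\eps \to \Omega_{\eps,r_c}^+ \to \Omega_{0,r_c}^+$, and since $\cA^c(\Omega_{s,r}^{\pm}) < \cA^c(\Omega)$ for $(s,r)\neq(0,0)$ by the strict decrease and the surgery estimates, the entire sweepout stays strictly below $\cA^c(\Omega)$. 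The key input you are missing is precisely \cite[Lemma 3.2]{KL}, which lets you close up the sweepout without any reference to an ambient minimizing sequence.
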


\subsection{Deletion of overlaps}
Before specifying the deletion of overlaps procedure, we require the following consequence of the isoperimetric inequality.

\begin{lemma}\label{lem:isoperimetric}
    There is a constant $\eta > 0$ (depending on $(M, g)$ and $c$) so that if $U \subset M$ is a set of finite perimeter with $\cH^2(U) \leq \eta$, then
    \[ \cA^c(U) \geq 0. \]
\end{lemma}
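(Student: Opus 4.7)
The plan is to reduce the lemma to the classical small-scale isoperimetric inequality on the closed Riemannian surface $(M,g)$. The key input is the following well-known fact: there exist constants $C > 0$ and $\eta_0 > 0$ depending only on $(M,g)$ such that every set of finite perimeter $U \subset M$ with $\cH^2(U) \leq \eta_0$ satisfies
\[ \cH^1(\partial U) \geq C\sqrt{\cH^2(U)}. \]
Once this is in hand, the lemma follows by a one-line comparison between the perimeter and area terms in $\cA^c$.

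To justify the inequality, I would cover $M$ by finitely many geodesic balls on which the metric is uniformly bi-Lipschitz to the Euclidean one, with bi-Lipschitz constant tending to $1$ as the radius shrinks; transporting the Euclidean isoperimetric inequality through such a chart yields $\cH^1(\partial U) \geq 2\sqrt{\pi\,\cH^2(U)}(1 - o(1))$ for any $U$ of sufficiently small area, so that any choice $C < 2\sqrt{\pi}$ is permissible once $\eta_0$ is small enough. (An alternative route is to invoke the fact that sufficiently small isoperimetric regions on closed surfaces are geodesic balls, or to use the monotonicity formula.)

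With this input at hand, set $\eta := \min\{\eta_0,\, C^2/c^2\}$. For any set of finite perimeter $U$ with $\cH^2(U) \leq \eta$, we then have
\[ \cH^1(\partial U) \geq C\sqrt{\cH^2(U)} \geq \frac{C}{\sqrt{\eta}}\,\cH^2(U) \geq c\,\cH^2(U), \]
and therefore $\cA^c(U) = \cH^1(\partial U) - c\,\cH^2(U) \geq 0$. No obstacle of substance arises: the only nontrivial ingredient is the standard local isoperimetric inequality, and the rest is a rearrangement.
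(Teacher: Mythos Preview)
Your argument is correct and essentially identical to the paper's: both invoke the standard isoperimetric inequality $\cH^1(\partial U)\geq c_1\sqrt{\cH^2(U)}$ for sets of small area on a closed surface, set $\eta=\min\{\eta_0,\,c_1^2/c^2\}$, and conclude $\cA^c(U)\geq 0$ by the same one-line computation. The only cosmetic difference is that the paper takes $\eta_0=\tfrac{1}{2}\cH^2(M)$ rather than a small-scale threshold, while you supply more justification for the isoperimetric constant.
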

\begin{proof}
    Since $M$ is closed, there is a constant $c_1$ (depending on $(M, g)$) so that
    \[ \cH^1(\partial U) \geq c_1(\cH^2(U))^{1/2} \]
    for any set of finite perimeter $U$ with $\cH^2(U) \leq \frac{1}{2}\cH^2(M)$. Let $\eta = \min\{c_1^2/c^2, \cH^2(M)/2\}$. Then we have
    \begin{align*}
        \cA^c(U)
        & = \cH^1(\partial U) - c\cH^2(U)
        \geq \cH^2(U)^{1/2}(c_1 - c(\cH^2(U))^{1/2}) \geq 0,
    \end{align*}
    as desired.
\end{proof}

To eliminate overlaps, we simply delete one copy of the the multiplicity 2 piece of $T_s$. Namely, we define
\[ \Omega_s := \mathrm{spt}(T_s). \]
See Figure \ref{fig:delete} for an illustration of this procedure. We observe that $\Omega_0 = \Omega$ (up to a set of $\cH^2$-measure 0).

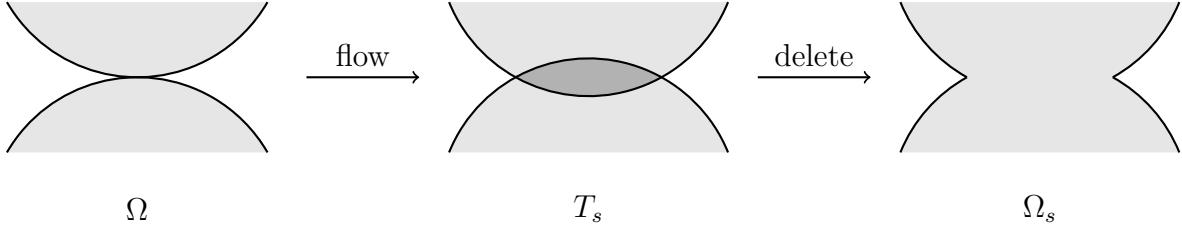
\begin{figure}
\centering
\begin{tikzpicture}
    \fill[gray!20!white] (-7.732, 1) arc [start angle = 210, end angle = 330, radius = 2];
    \fill[gray!20!white] (-7.732, -1) arc [start angle = 150, end angle = 30, radius = 2];
    \draw[thick] (-7.732, 1) arc [start angle = 210, end angle = 330, radius = 2];
    \draw[thick] (-7.732, -1) arc [start angle = 150, end angle = 30, radius = 2];
    \draw (-6, -1.75) node {$\Omega$};

    \draw[thick][->] (-3.75, 0) -- (-2.25, 0);
    \draw (-3, 0) node[above] {flow};

    \fill[gray!20!white] (-1.854, 1) arc [start angle = 202, end angle = 338, radius = 2];
    \fill[gray!20!white] (-1.854, -1) arc [start angle = 158, end angle = 22, radius = 2];
    \fill[gray!60!white] (-.968, 0) arc [start angle = 270-29, end angle = 299, radius = 2];
    \fill[gray!60!white] (-.968, 0) arc [start angle = 119, end angle = 90-29, radius = 2];
    \fill[gray!60!white] (-.96, -.02) -- (-.96, .02) -- (.96, .02) -- (.96, -.02);
    \draw[thick] (-1.854, 1) arc [start angle = 202, end angle = 338, radius = 2];
    \draw[thick] (-1.854, -1) arc [start angle = 158, end angle = 22, radius = 2];
    \draw (0, -1.75) node {$T_s$};

    \draw[thick][->] (2.25, 0) -- (3.75, 0);
    \draw (3, 0) node[above] {delete};

    \fill[gray!20!white] (4.146, 1) arc [start angle = 202, end angle = 338, radius = 2];
    \fill[gray!20!white] (4.146, -1) arc [start angle = 158, end angle = 22, radius = 2];
    \draw[thick] (4.146, 1) arc [start angle = 202, end angle = 270-29, radius = 2];
    \draw[thick] (7.854, 1) arc [start angle = 338, end angle = 299, radius = 2];
    \draw[thick] (4.146, -1) arc [start angle = 158, end angle = 119, radius = 2];
    \draw[thick] (7.854, -1) arc [start angle = 22, end angle = 90-29, radius = 2];
    \draw (6, -1.75) node {$\Omega_s$};
\end{tikzpicture}
\caption{Local picture of the deletion of overlaps procedure.}
\label{fig:delete}
\end{figure}

\begin{lemma}\label{lem:deletion_of_overlaps}
    There are $\eps > 0$, $\delta > 0$ depending on $\gamma$ and $\Omega$ so that if $|\phi| \leq 1$ and $0 \leq s \leq \eps$, then
    \begin{enumerate}
        \item $\partial \Omega_s \cap B_{\delta}(x_s)$ consists of two $C^{1,1}$ embeddings intersecting transversely at $x_s$, 
        \item $[\Omega_s] \llcorner B_{\delta}(x_s) = T_s \llcorner B_{\delta}(x_s)$,
        \item $\cA^c(\Omega_s) \leq \mathrm{length}(\gamma_s) - c\M(T_s)$.
    \end{enumerate}
\end{lemma}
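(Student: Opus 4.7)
The plan is to prove the three items in sequence; item (2) is the main analytic content, while items (1) and (3) are then fairly short.

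For item (1), the key input is the transversality of the original self-intersection: $\gamma'(t_0)$ and $\gamma'(t_1)$ are linearly independent. I would apply the implicit function theorem to the equation $\Gamma(s, t) = \Gamma(s, t')$ at the base point $(0, t_0, t_1)$, which, uniformly over $|\phi| \leq 1$ and small $s$, produces $C^{1,1}$ functions $\tilde t_0(s), \tilde t_1(s)$ and a unique nearby self-intersection $x_s := \Gamma(s, \tilde t_0(s)) = \Gamma(s, \tilde t_1(s))$ that is again transverse. The $C^{1,1}$ regularity of the two local branches follows from $\gamma \in C^{1,1}$ together with smoothness of $\exp$. Choosing $\delta$ smaller than half the distance from $x_s$ to $\gamma_s \setminus (\gamma_s(I_0) \cup \gamma_s(I_1))$ — which is uniformly bounded below for small $s$ — gives (1).

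For item (2), the two local branches of $\gamma_s$ partition $B_\delta(x_s)$ into four open sectors. Since $\partial T_s = [\gamma_s]$ is an integer 1-current of multiplicity 1 on each branch, constancy theorems for integer rectifiable currents imply that $T_s$ has constant integer density on each sector, changing by $\pm 1$ upon crossing a branch. To pin down the four values, I would compute the density of $T_s$ in a reference sector directly from $T_s = [\Omega] + \Gamma_{\#}[[0, s) \times S^1]$ (choosing a sector on which the swept region is not present and $\Omega$ is or is not present), and then propagate around $x_s$ via the jump formula, using the orientations of the branches at $t_0, t_1$ together with the sign conventions $\nu|_{I^+} = N|_{I^+}$, $\nu|_{I^-} = -N|_{I^-}$. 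A direct computation shows that the four densities alternate between $0$ and $1$, which is exactly the assertion $[\Omega_s] \llcorner B_\delta(x_s) = T_s \llcorner B_\delta(x_s)$.

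For item (3), the current $T_s$ may still have density $\geq 2$ on some region $R \subset M \setminus B_\delta(x_s)$, whose boundary $\partial R$ is contained in $\gamma_s \setminus \partial \Omega_s$ (these pieces of $\gamma_s$ are interior to $\Omega_s$, separating density $2$ from density $1$). By a direct decomposition,
\begin{equation*}
\mathrm{length}(\gamma_s) = \cH^1(\partial \Omega_s) + \cH^1(\partial R), \qquad \M(T_s) = \cH^2(\Omega_s) + \cH^2(R).
\end{equation*}
For $s$ small, $\cH^2(R) \to 0$, so in particular $\cH^2(R) \leq \eta$ where $\eta$ is the constant of Lemma \ref{lem:isoperimetric}. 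Applying that lemma to each connected component of $R$ yields $c\cH^2(R) \leq \cH^1(\partial R)$, and substituting gives
\begin{equation*}
\cA^c(\Omega_s) = \cH^1(\partial \Omega_s) - c\cH^2(\Omega_s) \leq \mathrm{length}(\gamma_s) - c\M(T_s),
\end{equation*}
which is (3).

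The main obstacle will be the sign and density bookkeeping in item (2): one must verify separately for Configurations 1 and 2 that the four sector-densities around $x_s$ genuinely take values in $\{0, 1\}$ rather than producing a density-$2$ sector, which would falsify the identification in (2). The same orientation accounting is what justifies the clean decomposition of $\partial R$ and $R$ used in item (3).
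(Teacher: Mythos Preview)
Your proposal is correct and follows essentially the same route as the paper, but with inverted emphasis. The paper treats (1) and (2) as one-liners: (1) because transversality is an open condition, and (2) because in a neighborhood of $x_0$ the curve $\gamma$ has no almost-embedded points, so $T_0 = [\Omega]$ already has multiplicity in $\{0,1\}$ on each of the four sectors, and this persists for small $s$ by openness. This sidesteps the sector-by-sector density bookkeeping you flag as the ``main obstacle''; in particular the verification does not need to be done separately for Configurations~1 and~2. Your constancy-theorem computation would of course reproduce the same conclusion, just with more work.

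For (3) the two arguments coincide: the paper writes $T_s - [\Omega_s] = [O_s]$ (your $R$), bounds $\cH^2(O_s)$ by the mass of the swept tube $E_\#[(-\eps,\eps)\times S^1]$ to get $\cH^2(O_s) \leq \eta$, and then applies Lemma~\ref{lem:isoperimetric} exactly as you do. Your decomposition $\mathrm{length}(\gamma_s) = \cH^1(\partial\Omega_s) + \cH^1(\partial O_s)$ and $\M(T_s) = \cH^2(\Omega_s) + \cH^2(O_s)$ relies implicitly on the overlap having multiplicity exactly~2, which the paper notes explicitly (``the almost embedded points of $\gamma$ have multiplicity~2''); you should state this.
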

\begin{proof}
    Conclusion (1) follows because transversality is an open condition.

    Conclusion (2) follows because there are no almost embedded points in a neighborhood of $x_0$, and in this neighborhood the fact that $T_s$ has multiplicity one is an open condition.

    It remains to show (3). Let $E : \R \times S^1 \to M$ be given by
    \[ E(s, t) := \exp_{\gamma(t)}(s\nu(t)). \]
    Let $U_s := E_{\#}[(-s, s) \times S^1]$. Since $E$ is Lipschitz, we have
    \[ \lim_{s \to 0} \M(U_s) = 0. \]
    In particular, we can find $\eps > 0$ small so that $\M(U_{\eps}) \leq \eta$.

    Since the almost embedded points of $\gamma$ have multiplicity 2, we have $T_s - [\Omega_s] = [O_s]$, where $O_s \subset M$ is an open set of finite perimeter satisfying $\cH^2(O_s) \leq \eta$.

    By Lemma \ref{lem:isoperimetric}, we have
    \begin{align*}
        \mathrm{length}(\gamma_s) - c\M(T_s)
        = \cA^c(\Omega_s) + \cH^1(\partial O_s) - c\cH^2(O_s) \geq \cA^c(\Omega_s),
    \end{align*}
    as desired.
\end{proof}

\subsection{Cut-and-paste scale}
We choose a uniform scale at which all steps in the cut-and-paste procedure work.

\begin{lemma}\label{lem:inj_c_radius}
    For a closed oriented Riemannian surface $(M^2, g)$ and any $c > 0$, there is a constant $r_c \in (0, \delta)$ (depending on $(M, g)$ and $c$) so that
    \begin{itemize}
        \item $\cH^2(B_{r_c}(x)) \leq \eta$ for all $x \in M$,
        \item $B_{\rho}(x)$ is strictly $c$-convex for all $x \in M$ and $\rho \leq r_c$,
        \item for any $x,\ y \in M$ with $d(x, y) \leq 2r_c$, there is a unique geodesic from $x$ to $y$ contained in $B_{2r_c}(x)$,
        \item for any $x,\ y \in M$ with $d(x, y) \leq 2r_c$, there is a unique curve $\sigma$ from $x$ to $y$ in $B_{2r_c}(x)$ with constant curvature $c$ so that $(\sigma'(t), \kappa_{\sigma}(t))$ is a positive basis,
        \item for any $x,\ y \in M$ with $d(x, y) \leq 2r_c$, there is a unique curve $\sigma$ from $x$ to $y$ in $B_{2r_c}(x)$ with constant curvature $c$ so that $(\sigma'(t), \kappa_{\sigma}(t))$ is a negative basis.
    \end{itemize}
\end{lemma}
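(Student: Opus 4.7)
The plan is to note that each of the five stated properties is an open smallness condition on $r_c$ that, by compactness of $M$, holds uniformly in the basepoint once $r_c$ is taken below some threshold depending on $(M,g)$ and $c$. I would then set $r_c$ to be the minimum of those thresholds together with $\delta$. The first three properties follow from routine comparison arguments; the last two require constructing a $c$-analogue of the exponential map and applying the inverse function theorem uniformly in $x$.

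For the first property, $\cH^2(B_\rho(x)) = \pi\rho^2 + O(\rho^4)$ as $\rho \to 0$ uniformly in $x \in M$ by compactness, which gives the desired area bound. For the second, the boundary circle $\partial B_\rho(x)$ has signed geodesic curvature with respect to the outward unit normal equal to $\rho^{-1} + O(\rho)$ uniformly in $x$ (from the Taylor expansion of the metric in geodesic polar coordinates, or equivalently the Riccati equation), so this exceeds $c$ once $\rho$ is small enough. For the third, the convexity radius of $(M,g)$ is bounded below by a positive constant, and within convex balls any two points are joined by a unique minimizing geodesic contained in the ball.

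For the last two properties, fix one of the two orientations (the other case is symmetric) and let $J:TM \to TM$ denote rotation by $\pi/2$ in this orientation. Given $x \in M$ and a unit $v \in T_x M$, let $\gamma_{x,v}$ be the unique arclength-parametrized curve solving $\nabla_{\gamma'}\gamma' = cJ\gamma'$ with $\gamma(0)=x$ and $\gamma'(0)=v$, and define a $c$-exponential map $\Phi^c : TM \to M$ by $\Phi^c(v) := \gamma_{\pi(v), v/|v|}(|v|)$, extended smoothly so that the zero section maps to the base. The differential of $\Phi^c|_{T_x M}$ at the origin is the identity, so by the inverse function theorem together with compactness of $M$ there is a positive $R$ such that, for every $x$, $\Phi^c|_{T_x M}$ is a diffeomorphism from $\{|v|<R\}$ onto an open set containing $B_{R/2}(x)$; choosing $r_c$ with $4r_c < R/2$ yields the existence halves of properties 4 and 5 by inverting $\Phi^c$. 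The main subtlety is uniqueness: a priori, a curve $\sigma$ of curvature $c$ from $x$ to $y$ in $B_{2r_c}(x)$ could have length $L = \length(\sigma) \ge R$, corresponding to a ``long arc'' of the $c$-circle through $x$. To rule this out, I would shrink $r_c$ further so that the metric on $B_{2r_c}(x)$ is uniformly close to flat in normal coordinates, whence $\sigma$ is a small perturbation of a Euclidean arc of radius $1/c$; fitting inside $B_{2r_c}(x)$ then forces $L \le 4r_c + O(r_c^2) < R$. Injectivity of $\Phi^c$ on $\{|v|<R\}$ then gives $\sigma = \gamma_{x,\sigma'(0)}|_{[0,L]}$, completing uniqueness. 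Taking $r_c$ to be the minimum of all thresholds above together with $\delta$ finishes the proof, and this ``long arc'' ruling-out is the principal technical point.
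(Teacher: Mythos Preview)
Your proposal is correct and follows the same approach as the paper, which simply states that the result follows from standard ODE theory and the compactness of $M$, analogous to the geodesic flow. Your elaboration via a $c$-exponential map and the inverse function theorem is exactly how one makes that sentence precise; one small remark is that $\Phi^c$ is only $C^1$ (not $C^\infty$) at the zero section, but this suffices for the inverse function theorem and does not affect your argument.
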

\begin{proof}
    The result follows from standard ODE theory and the compactness of $M$, analogous to the case of the geodesic flow (for instance, see \cite[Chapter 3.2]{docarmo}).
\end{proof}

\subsection{Cut-and-paste procedure}\label{ssec:cut-paste}
For any $0 \leq r \leq r_c$ and any $0 \leq s \leq \eps$, we define sets $\Omega_{s,r}^+$ and $\Omega_{s,r}^-$ as follows.

Let $\{\Omega^1, \Omega^2,\Omega^3,\Omega^4\}$ be the components of $B_r(x_s) \setminus \partial \Omega_s$. We choose the labels so that $\Omega^1,\ \Omega^3 \subset \Omega_s$. Let $\{p_1, p_2, p_3, p_4\} = \partial \Omega_s \cap \partial B_r(x_s)$. We choose the labels so that $p_1,\ p_2 \in \overline{\Omega}^1$ and $p_1,\ p_4 \in \overline{\Omega}^4$. Note that by transversality we can make these choices continuously in $s$ and $r$, although we suppress the dependence on $s$ and $r$ for convenience of notation. For clarity, we refer the reader to Figure \ref{fig:setup}.

\begin{figure}
\centering
\begin{tikzpicture}
    \fill[gray!20!white] (-7, 2) .. controls (-6, 1.8) and (-6.5, 1.2) .. (-5.5, 1)
        .. controls (-4.5, .8) and (-5, .2) .. (-4, 0)
        .. controls (-5, -.2) and (-4.5, -.8) .. (-5.5, -1)
        .. controls (-6.5, -1.2) and (-6, -1.8) .. (-7, -2);
    \fill[gray!20!white] (-1, 2) .. controls (-2, 1.8) and (-1.5, 1.2) .. (-2.5, 1)
        .. controls (-3.5, .8) and (-3, .2) .. (-4, 0)
        .. controls (-3, -.2) and (-3.5, -.8) .. (-2.5, -1)
        .. controls (-1.5, -1.2) and (-2, -1.8) .. (-1, -2);
    \draw[thick] (-7, 2) .. controls (-6, 1.8) and (-6.5, 1.2) .. (-5.5, 1)
        .. controls (-4.5, .8) and (-5, .2) .. (-4, 0)
        .. controls (-3, -.2) and (-3.5, -.8) .. (-2.5, -1)
        .. controls (-1.5, -1.2) and (-2, -1.8) .. (-1, -2);
    \draw[thick] (-7, -2) .. controls (-6, -1.8) and (-6.5, -1.2) .. (-5.5, -1)
        .. controls (-4.5, -.8) and (-5, -.2) .. (-4, 0)
        .. controls (-3, .2) and (-3.5, .8) .. (-2.5, 1)
        .. controls (-1.5, 1.2) and (-2, 1.8) .. (-1, 2);
    \draw (-2.2, 0) arc [start angle = 0, end angle = 360, radius = 1.8];
    \draw (-3.2, 2) node {$B_r(x_s)$};
    \draw (-6.5, .5) node {$\Omega_s$};
    \draw (-5.2, 0) node {$\Omega^1$};
    \draw (-4, -1.2) node {$\Omega^2$};
    \draw (-2.8, 0) node {$\Omega^3$};
    \draw (-4, 1.2) node {$\Omega^4$};
    \filldraw (-4, 0) circle [radius=0.07];
    \draw (-4, -.35) node {$x_s$};
    \filldraw (-5.5, 1) circle [radius=0.07];
    \draw (-5.6, 1.3) node {$p_1$};
    \filldraw (-5.5, -1) circle [radius=0.07];
    \draw (-5.6, -1.3) node {$p_2$};
    \filldraw (-2.5, -1) circle [radius=0.07];
    \draw (-2.4, -1.3) node {$p_3$};
    \filldraw (-2.5, 1) circle [radius=0.07];
    \draw (-2.4, 1.3) node {$p_4$};
\end{tikzpicture}
\caption{Cut-and-paste setup.}
\label{fig:setup}
\end{figure}
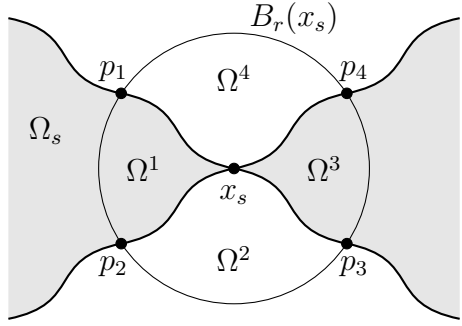

\emph{Construction of $\Omega_{s,r}^+$}: Let $\Omega^{2,+}$ be the open set satisfying
\begin{itemize}
    \item $\partial \Omega^{2,+} \cap B_r(x_s)$ is the unique length minimizing curve from $p_2$ to $p_3$ in $B_r(x_s)$,
    \item $\partial \Omega^{2,+} \setminus B_r(x_s) = \partial \Omega^2 \setminus B_r(x_s)$.
\end{itemize}
Let $\Omega^{4,+}$ be the open set satisfying
\begin{itemize}
    \item $\partial \Omega^{4,+} \cap B_r(x_s)$ is the unique length minimizing curve from $p_1$ to $p_4$ in $B_r(x_s)$,
    \item $\partial \Omega^{4,+} \setminus B_r(x_s) = \partial \Omega^4 \setminus B_r(x_s)$.
\end{itemize}
Since the points $p_i$ are disjoint and ordered, $\Omega^{2,+}$ and $\Omega^{4,+}$ are disjoint. We define
\[ \Omega_{s,r}^+ := (\Omega_s \setminus B_r(x_s)) \cup (B_r(x_s) \setminus (\Omega^{2,+} \cup \Omega^{4,+})). \]
Since $p_i$ vary continuously and we have uniqueness from Lemma \ref{lem:inj_c_radius}, $\Omega_{s,r}^+$ is continuous in $s$ and $r$.

By construction, we have
\[ \cA^c(\Omega_{s,r}^+) < \cA^c(\Omega_s) \]
for $r > 0$.

The construction of $\Omega_{s,r}^+$ is illustrated in Figure \ref{fig:omega+}.

\begin{figure}
\centering
\begin{tikzpicture}
    \fill[gray!20!white] (7, 2) .. controls (6, 1.8) and (6.5, 1.2) .. (5.5, 1)
        -- (2.5, 1)
        .. controls (1.5, 1.2) and (2, 1.8) .. (1, 2)
        -- (1, -2)
        .. controls (2, -1.8) and (1.5, -1.2) .. (2.5, -1)
        -- (5.5, -1)
        .. controls (6.5, -1.2) and (6, -1.8) .. (7, -2)
        -- (7, 2);
    \draw[thick] (7, 2) .. controls (6, 1.8) and (6.5, 1.2) .. (5.5, 1);
    \draw[thick] (2.5, -1) .. controls (1.5, -1.2) and (2, -1.8) .. (1, -2);
    \draw[thick] (7, -2) .. controls (6, -1.8) and (6.5, -1.2) .. (5.5, -1);
    \draw[thick] (2.5, 1) .. controls (1.5, 1.2) and (2, 1.8) .. (1, 2);
    \draw[thick] (2.5, 1) -- (5.5, 1);
    \draw[thick] (2.5, -1) -- (5.5, -1);
    \draw (5.8, 0) arc [start angle = 0, end angle = 360, radius = 1.8];
    \draw (4.8, 2) node {$B_r(x_s)$};
    \draw (1.6, .5) node {$\Omega_{s,r}^+$};
    \draw (4, -1.4) node {$\Omega^{2, +}$};
    \draw (4, 1.4) node {$\Omega^{4,+}$};
    \filldraw (2.5, 1) circle [radius=0.07];
    \draw (2.4, 1.3) node {$p_1$};
    \filldraw (2.5, -1) circle [radius=0.07];
    \draw (2.4, -1.3) node {$p_2$};
    \filldraw (5.5, -1) circle [radius=0.07];
    \draw (5.6, -1.3) node {$p_3$};
    \filldraw (5.5, 1) circle [radius=0.07];
    \draw (5.6, 1.3) node {$p_4$};
\end{tikzpicture}
\caption{Construction of $\Omega_{s,r}^+$.}
\label{fig:omega+}
\end{figure}

\emph{Construction of $\Omega_{s,r}^-$}: Let $\Omega^{1,-}$ be the open set satisfying
\begin{itemize}
    \item $\partial \Omega^{1,-} \cap B_r(x_s)$ is the unique $\cA^c$ minimizing curve from $p_1$ to $p_2$ in $B_r(x_s)$ with constant curvature $c$ with respect to $\Omega^{1,-}$,
    \item $\partial \Omega^{1,-} \setminus B_r(x_s) = \partial \Omega^1 \setminus B_r(x_s)$.
\end{itemize}
Let $\Omega^{3,-}$ be the open set satisfying
\begin{itemize}
    \item $\partial \Omega^{3,-} \cap B_r(x_s)$ is the unique $\cA^c$ minimizing curve from $p_3$ to $p_4$ in $B_r(x_s)$ with constant curvature $c$ with respect to $\Omega^{3,-}$,
    \item $\partial \Omega^{3,-} \setminus B_r(x_s) = \partial \Omega^3 \setminus B_r(x_s)$.
\end{itemize}
We define
\[ \Omega_{s,r}^- := (\Omega_s \setminus B_r(x_s)) \cup (\Omega^{1,-} \cup \Omega^{3,-}). \]
Since $p_i$ vary continuously and we have uniqueness from Lemma \ref{lem:inj_c_radius}, $\Omega_{s,r}^-$ is continuous in $s$ and $r$.

By Lemma \ref{lem:isoperimetric} (since we delete one copy of the overlap of $\Omega^{1,-}$ and $\Omega^{3,-}$), we have
\[ \cA^c(\Omega_{s,r}^-) < \cA^c(\Omega_s) \]
for $r > 0$.

We also note that since $B_r(x_0) \cap \Omega^1$ and $B_r(x_0) \cap \Omega^3$ are strictly $c$-convex when $s = 0$, we have that $\Omega^{1,-}$ and $\Omega^{3,-}$ are disjoint for $s = 0$.

The construction of $\Omega_{s,r}^-$ is illustrated in Figure \ref{fig:omega-}.

\begin{figure}
\centering
\begin{tikzpicture}
    \fill[gray!20!white] (-7, 2) .. controls (-6, 1.8) and (-6.5, 1.2) .. (-5.5, 1)
        .. controls (-4.75, .5) and (-4.75, -.5) .. (-5.5, -1)
        .. controls (-6.5, -1.2) and (-6, -1.8) .. (-7, -2);
    \fill[gray!20!white] (-1, 2) .. controls (-2, 1.8) and (-1.5, 1.2) .. (-2.5, 1)
        .. controls (-3.25, .5) and (-3.25, -.5) .. (-2.5, -1)
        .. controls (-1.5, -1.2) and (-2, -1.8) .. (-1, -2);
    \draw[thick] (-7, 2) .. controls (-6, 1.8) and (-6.5, 1.2) .. (-5.5, 1)
        .. controls (-4.75, .5) and (-4.75, -.5) .. (-5.5, -1)
        .. controls (-6.5, -1.2) and (-6, -1.8) .. (-7, -2);
    \draw[thick] (-1, 2) .. controls (-2, 1.8) and (-1.5, 1.2) .. (-2.5, 1)
        .. controls (-3.25, .5) and (-3.25, -.5) .. (-2.5, -1)
        .. controls (-1.5, -1.2) and (-2, -1.8) .. (-1, -2);
    \draw (-2.2, 0) arc [start angle = 0, end angle = 360, radius = 1.8];
    \draw (-3.2, 2) node {$B_r(x_s)$};
    \draw (-6.4, .5) node {$\Omega_{s,r}^-$};
    \draw (-5.35, 0) node {$\Omega^{1,-}$};
    \draw (-2.6, 0) node {$\Omega^{3,-}$};
    \filldraw (-5.5, 1) circle [radius=0.07];
    \draw (-5.6, 1.3) node {$p_1$};
    \filldraw (-5.5, -1) circle [radius=0.07];
    \draw (-5.6, -1.3) node {$p_2$};
    \filldraw (-2.5, -1) circle [radius=0.07];
    \draw (-2.4, -1.3) node {$p_3$};
    \filldraw (-2.5, 1) circle [radius=0.07];
    \draw (-2.4, 1.3) node {$p_4$};
\end{tikzpicture}
\caption{Construction of $\Omega_{s,r}^-$.}
\label{fig:omega-}
\end{figure}

\subsection{Competitor sweepout}
Suppose there is a function $\phi$ so that the second variation of $\cA^c$ is negative. We rescale $\phi$ so that $|\phi| \leq 1$, and take $\eps > 0$, $\delta > 0$, and $r_c > 0$ as in Lemmas \ref{lem:deletion_of_overlaps} and \ref{lem:inj_c_radius}.

We construct a sweepout as follows.

\begin{enumerate}
    \item By \cite[Lemma 3.2]{KL}, there is a path from $\varnothing$ to $\Omega_{0, r_c}^-$ with $\cA^c$ below $\cA^c(\Omega_{0, r_c}^-)$.
    \item We connect $\Omega_{0, r_c}^-$ to $\Omega_{\eps, r_c}^-$ (by $\Omega_{s, r_c}^-$).
    \item We connect $\Omega_{\eps, r_c}^-$ to $\Omega_{\eps}$ (by $\Omega_{\eps, r}^-$).
    \item We connect $\Omega_{\eps}$ to $\Omega_{\eps, r_c}^+$ (by $\Omega_{\eps, r}^+$).
    \item We connect $\Omega_{\eps, r_c}^+$ to $\Omega_{0, r_c}^+$ (by $\Omega_{s, r_c}^+$).
    \item By \cite[Lemma 3.2]{KL}, there is a path from $\Omega_{0, r_c}^+$ to $M$ with $\cA^c$ below $\cA^c(\Omega_{0, r_c}^+)$.
\end{enumerate}
Since $\cA^c(\Omega_{s,r}^{\pm}) < \cA^c(\Omega)$ for $(s, r) \neq (0, 0)$ (and the value is continuous in those variables), the entire sweepout has $\cA^c$ strictly below $\cA^c(\Omega)$. Hence, $\cA^c(\Omega) > W_c$.

Figure \ref{fig:sweepout} contains an illustration of the sweepout path, and Figure \ref{fig:profile} illustrates the $\cA^c$ profile of the competitor sweepout.

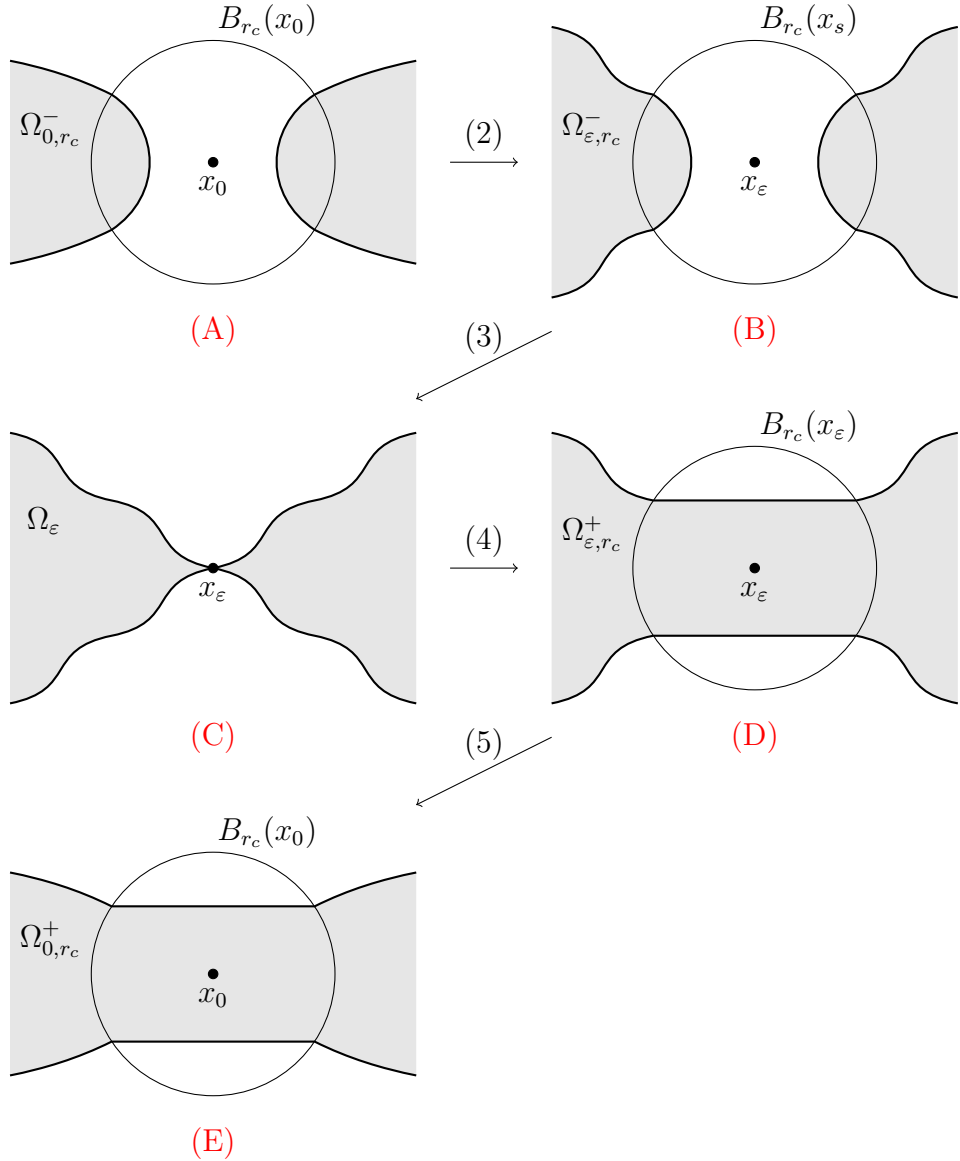
\begin{figure}
\centering
\begin{tikzpicture}[scale=.9]
    

    \fill[gray!20!white] (1, 1.5) .. controls (1.5, 1.4) and (2, 1.25) .. (2.5, 1)
        .. controls (3.25, .5) and (3.25, -.5) .. (2.5, -1)
        .. controls (2, -1.25) and (1.5, -1.4) .. (1, -1.5);
    \fill[gray!20!white] (7, 1.5) .. controls (6.5, 1.4) and (6, 1.25) .. (5.5, 1)
        .. controls (4.75, .5) and (4.75, -.5) .. (5.5, -1)
        .. controls (6, -1.25) and (6.5, -1.4) .. (7, -1.5);
    \draw[thick] (1, 1.5) .. controls (1.5, 1.4) and (2, 1.25) .. (2.5, 1)
        .. controls (3.25, .5) and (3.25, -.5) .. (2.5, -1)
        .. controls (2, -1.25) and (1.5, -1.4) .. (1, -1.5);
    \draw[thick] (7, 1.5) .. controls (6.5, 1.4) and (6, 1.25) .. (5.5, 1)
        .. controls (4.75, .5) and (4.75, -.5) .. (5.5, -1)
        .. controls (6, -1.25) and (6.5, -1.4) .. (7, -1.5);
    \draw (5.8, 0) arc [start angle = 0, end angle = 360, radius = 1.8];
    \draw (4.8, 2.1) node {$B_{r_c}(x_0)$};
    \draw (1.6, .5) node {$\Omega_{0,r_c}^-$};
    \filldraw (4, 0) circle [radius=0.07];
    \draw (4, -.35) node {$x_0$};
    \draw[color=red] (4, -2.5) node {(A)};

    \draw[->] (7.5, 0) -- (8.5, 0);
    \draw (8, 0) node[above] {(2)};

    \fill[gray!20!white] (9, 2) .. controls (10, 1.8) and (9.5, 1.2) .. (10.5, 1)
        .. controls (11.25, .5) and (11.25, -.5) .. (10.5, -1)
        .. controls (9.5, -1.2) and (10, -1.8) .. (9, -2);
    \fill[gray!20!white] (15, 2) .. controls (14, 1.8) and (14.5, 1.2) .. (13.5, 1)
        .. controls (12.75, .5) and (12.75, -.5) .. (13.5, -1)
        .. controls (14.5, -1.2) and (14, -1.8) .. (15, -2);
    \draw[thick] (9, 2) .. controls (10, 1.8) and (9.5, 1.2) .. (10.5, 1)
        .. controls (11.25, .5) and (11.25, -.5) .. (10.5, -1)
        .. controls (9.5, -1.2) and (10, -1.8) .. (9, -2);
    \draw[thick] (15, 2) .. controls (14, 1.8) and (14.5, 1.2) .. (13.5, 1)
        .. controls (12.75, .5) and (12.75, -.5) .. (13.5, -1)
        .. controls (14.5, -1.2) and (14, -1.8) .. (15, -2);
    \draw (13.8, 0) arc [start angle = 0, end angle = 360, radius = 1.8];
    \draw (12.8, 2.1) node {$B_{r_c}(x_s)$};
    \draw (9.6, .5) node {$\Omega_{\eps,r_c}^-$};
    \filldraw (12, 0) circle [radius=0.07];
    \draw (12, -.35) node {$x_{\eps}$};
    \draw[color=red] (12, -2.5) node {(B)};

    \draw[->] (9, -2.5) -- (7, -3.5);
    \draw (8, -2.6) node {(3)};

    \fill[gray!20!white] (1, -4) .. controls (2, -4.2) and (1.5, -4.8) .. (2.5, -5)
        .. controls (3.5, -5.2) and (3, -5.8) .. (4, -6)
        .. controls (3, -6.2) and (3.5, -6.8) .. (2.5, -7)
        .. controls (1.5, -7.2) and (2, -7.8) .. (1, -8);
    \fill[gray!20!white] (7, -4) .. controls (6, -4.2) and (6.5, -4.8) .. (5.5, -5)
        .. controls (4.5, -5.2) and (5, -5.8) .. (4, -6)
        .. controls (5, -6.2) and (4.5, -6.8) .. (5.5, -7)
        .. controls (6.5, -7.2) and (6, -7.8) .. (7, -8);
    \draw[thick] (7, -4) .. controls (6, -4.2) and (6.5, -4.8) .. (5.5, -5)
        .. controls (4.5, -5.2) and (5, -5.8) .. (4, -6)
        .. controls (3, -6.2) and (3.5, -6.8) .. (2.5, -7)
        .. controls (1.5, -7.2) and (2, -7.8) .. (1, -8);
    \draw[thick] (7, -8) .. controls (6, -7.8) and (6.5, -7.2) .. (5.5, -7)
        .. controls (4.5, -6.8) and (5, -6.2) .. (4, -6)
        .. controls (3, -5.8) and (3.5, -5.2) .. (2.5, -5)
        .. controls (1.5, -4.8) and (2, -4.2) .. (1, -4);
    \draw (1.5, -5.3) node {$\Omega_{\eps}$};
    \filldraw (4, -6) circle [radius=0.07];
    \draw (4, -6.35) node {$x_{\eps}$};
    \draw[color=red] (4, -8.5) node {(C)};

    \draw[->] (7.5, -6) -- (8.5, -6);
    \draw (8, -6) node[above] {(4)};

    \fill[gray!20!white] (15, -4) .. controls (14, -4.2) and (14.5, -4.8) .. (13.5, -5)
        -- (10.5, -5)
        .. controls (9.5, -4.8) and (10, -4.2) .. (9, -4)
        -- (9, -8)
        .. controls (10, -7.8) and (9.5, -7.2) .. (10.5, -7)
        -- (13.5, -7)
        .. controls (14.5, -7.2) and (14, -7.8) .. (15, -8)
        -- (15, -4);
    \draw[thick] (15, -4) .. controls (14, -4.2) and (14.5, -4.8) .. (13.5, -5);
    \draw[thick] (10.5, -7) .. controls (9.5, -7.2) and (10, -7.8) .. (9, -8);
    \draw[thick] (15, -8) .. controls (14, -7.8) and (14.5, -7.2) .. (13.5, -7);
    \draw[thick] (10.5, -5) .. controls (9.5, -4.8) and (10, -4.2) .. (9, -4);
    \draw[thick] (10.5, -5) -- (13.5, -5);
    \draw[thick] (10.5, -7) -- (13.5, -7);
    \draw (13.8, -6) arc [start angle = 0, end angle = 360, radius = 1.8];
    \draw (12.8, -3.9) node {$B_{r_c}(x_{\eps})$};
    \draw (9.6, -5.5) node {$\Omega_{\eps, r_c}^+$};
    \filldraw (12, -6) circle [radius=0.07];
    \draw (12, -6.35) node {$x_{\eps}$};
    \draw[color=red] (12, -8.5) node {(D)};

    \draw[->] (9, -8.5) -- (7, -9.5);
    \draw (8, -8.6) node {(5)};

    \fill[gray!20!white] (1, -10.5) .. controls (1.5, -10.6) and (2, -10.75) .. (2.5, -11)
        -- (5.5, -11)
        .. controls (6, -10.75) and (6.5, -10.6) .. (7, -10.5)
        -- (7, -13.5)
        .. controls (6.5, -13.4) and (6, -13.25) .. (5.5, -13)
        -- (2.5, -13)
        .. controls (2, -13.25) and (1.5, -13.4) .. (1, -13.5);
    \draw[thick] (1, -10.5) .. controls (1.5, -10.6) and (2, -10.75) .. (2.5, -11)
        -- (5.5, -11)
        .. controls (6, -10.75) and (6.5, -10.6) .. (7, -10.5);
    \draw[thick] (1, -13.5) .. controls (1.5, -13.4) and (2, -13.25) .. (2.5, -13)
        -- (5.5, -13)
        .. controls (6, -13.25) and (6.5, -13.4) .. (7, -13.5);
    \draw (5.8, -12) arc [start angle = 0, end angle = 360, radius = 1.8];
    \draw (4.8, -9.9) node {$B_{r_c}(x_0)$};
    \draw (1.6, -11.5) node {$\Omega_{0,r_c}^+$};
    \filldraw (4, -12) circle [radius=0.07];
    \draw (4, -12.35) node {$x_0$};
    \draw[color=red] (4, -14.5) node {(E)};


\end{tikzpicture}
\caption{Competitor sweepout from \S\ref{sec:cutandpaste}.}
\label{fig:sweepout}
\end{figure}

\begin{figure}
\centering
\begin{tikzpicture}
    \draw[->] (0, 3) -- (14, 3) node[right] {$t$};
    \draw[->] (0, 1.5) -- (0, 7.5) node[above] {$\cA^c$};
    \draw (-.3, 3) node[left] {$0$};

    \draw (0, 7) node[left] {$\cA^c(\Omega)$};
    \draw[dashed] (0, 7) -- (14, 7);

    \draw (0, 2) node[left] {$\cA^c(M)$};
    \draw[dashed] (0, 2) -- (14, 2);

    \draw (0, 3) .. controls (.5, 5) and (1, 5.5) .. (2, 5.5);
    \draw (2, 5.5) .. controls (2.5, 8) and (3.5, 3) .. (4, 5);
    \draw (4, 5) .. controls (4.5, 3.5) and (5.5, 5.5) .. (6, 6);
    \draw (6, 6) .. controls (6.5, 5.5) and (7.5, 2) .. (8, 4);
    \draw (8, 4) .. controls (8.5, 1) and (9.5, 8) .. (10, 5);
    \draw (10, 5) .. controls (11, 5) and (11.5, 4) .. (12, 2);

    \draw[thick] (1, 4.6) node {(1)};
    \draw[thick] (3, 6.1) node {(2)};
    \draw[thick] (5, 5.3) node {(3)};
    \draw[thick] (7, 4.9) node {(4)};
    \draw[thick] (9, 5.5) node {(5)};
    \draw[thick] (11, 3.9) node {(6)};

    \filldraw[color=red] (0, 3) circle [radius=0.1];
    \filldraw[color=red] (2, 5.5) circle [radius=0.1];
    \draw[color=red] (2, 5.5) node[below] {(A)};
    \filldraw[color=red] (4, 5) circle [radius=0.1];
    \draw[color=red] (4, 5) node[above] {(B)};
    \filldraw[color=red] (6, 6) circle [radius=0.1];
    \draw[color=red] (6, 6) node[above] {(C)};
    \filldraw[color=red] (8, 4) circle [radius=0.1];
    \draw[color=red] (8, 4) node[above] {(D)};
    \filldraw[color=red] (10, 5) circle [radius=0.1];
    \draw[color=red] (10, 5) node[below] {(E)};
    \filldraw[color=red] (12, 2) circle [radius=0.1];
\end{tikzpicture}
\caption{$\cA^c$ profile of the competitor sweepout in \S\ref{sec:cutandpaste}.}
\label{fig:profile}
\end{figure}
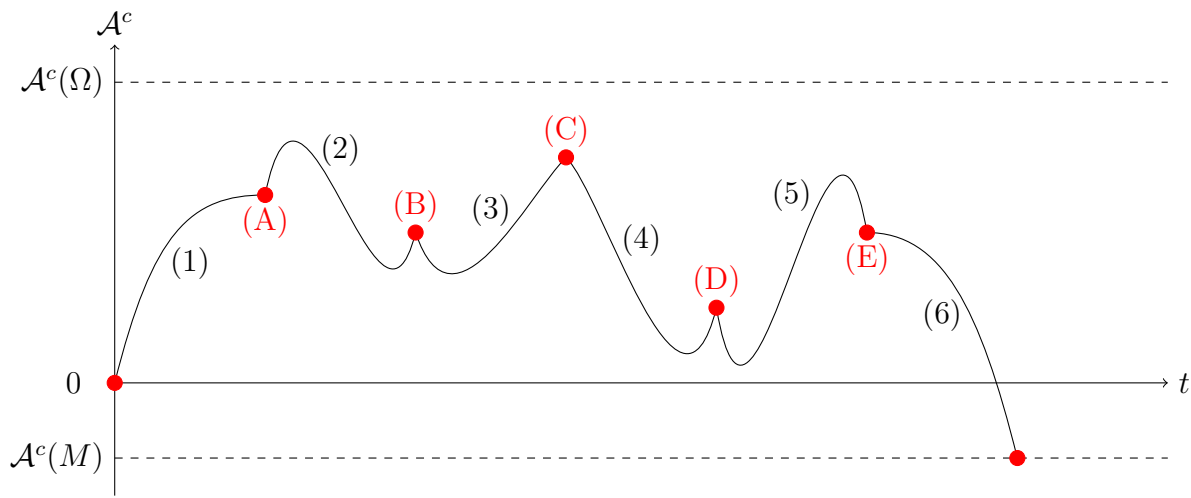

\section{Applying the Cut-and-Paste Sweepout}\label{sec:cutandpaste_apply}
In this section, we apply Theorem \ref{thm:cut_and_paste_sweepout} in each of the two configurations. In the subsequent sections, we use different techniques to obtain an improvement from Theorem \ref{thm:cut_and_paste_sweepout} in the case of Configuration 2.

\subsection{Excluding Configuration 1} We take $\phi \equiv 1$ in (\ref{eqn:second_variation_config1}), which yields
\begin{align*}
    \frac{d^2}{ds^2}\Big|_{s=0^+} (\mathrm{length}(\gamma_s) - c\M(T_s))
    & \leq -\mathrm{length}(\gamma)(\min K_g + c^2) - \frac{4c}{\sin\alpha}(1 + \cos\alpha)\\
    & = -\mathrm{length}(\gamma)(\min K_g + c^2) - 4c\cot(\alpha/2)\\
    & < -\mathrm{length}(\gamma)(\min K_g + c^2).
\end{align*}
By Theorem \ref{thm:cut_and_paste_sweepout}, Configuration 1 cannot achieve the first $c$ min-max width if
\begin{equation}\label{eqn:config1}
    c^2 \geq -\min K_g.
\end{equation}
Since $\coth(x) \geq 1$ for $x > 0$, we see that (\ref{eqn:formula_general_surface}) implies (\ref{eqn:config1}).

\subsection{Excluding Configuration 2} \label{ssec:exclude_c2}Let $L = \mathrm{length}(\gamma^+)$.

We view $\Omega$ intrinsically as a surface with strictly convex boundary. Let $p$ and $q$ be the points in the boundary of $\Omega$ corresponding to the self intersection point $x_0$. Let $\sigma$ be a length-minimizing curve in $\Omega$ from $p$ to $q$. Then $\sigma$ is a geodesic loop in $M$ (based at $x_0$). By the definition of the injectivity radius, we have $\mathrm{length}(\sigma) \geq 2\,\mathrm{inj}(M,g)$. Since $\sigma$ is length minimizing, we have
\begin{equation}\label{eqn:config2_length_inj}
    L > 2\,\mathrm{inj}(M, g).
\end{equation}

Let $\phi(t) = \sin(\pi t/L)$, where $[0, L]$ parametrizes $\gamma^+$. Then (\ref{eqn:second_variation_config2}) gives
\begin{align*}
    \frac{d^2}{ds^2}\Big|_{s=0^+} (\mathrm{length}(\gamma_s) - c\M(T_s))
    & = \left(\frac{\pi^2}{L^2} -(\min K_g + c^2)\right)\frac{L}{2}.
\end{align*}
By Theorem \ref{thm:cut_and_paste_sweepout}, Configuration 2 cannot achieve the first $c$ min-max width if
\begin{equation}\label{eqn:config2}
    c^2 \geq \frac{\pi^2}{4\,\mathrm{inj}(M,g)^2} - \min K_g.
\end{equation}
We observe that the constant scaling $\mathrm{inj}(M,g)^{-1}$ in this formula is $\pi/2$, the same nonsharp constant in the corresponding formula from \cite{SchneiderS2}.

\section{Comparison Argument}\label{sec:comparison}
We now conclude the proof of Theorem \ref{thm:general_surface} by using a comparison argument based on a result of \cite{dekster} in order to rule out Configuration 2 when condition \eqref{eqn:formula_general_surface} is satisfied.

The result we shall need, which follows from the Corollary to Theorem 1 in \cite{dekster}, is as follows:

\begin{proposition}[\cite{dekster}]\label{prop:dekster_comparison}
    Let $(M^2,g)$ be a smooth closed oriented Riemannian surface and let $\Omega\subset M$ be an open connected region with compact closure, bounded by a nonempty smooth curve. Suppose that:
    \begin{itemize}
        \item $\partial \Omega$ is $c$-convex to $\Omega$;
        \item $K_g\geq k>-c^2$ in $\Omega$.
    \end{itemize}
    Then, if $\sigma$ is a geodesic of $M$ which lies in $\Omega$,
    \begin{equation}
        \mathrm{length}(\sigma)\leq 2 R_0(c,k),
    \end{equation}
    where
    \[ R_0(c,k) := \begin{cases}
    \frac{1}{\sqrt{k}}\cot^{-1}\left(\frac{c}{\sqrt{k}}\right) & k > 0\\
    1/c & k = 0\\
    \frac{1}{\sqrt{-k}}\coth^{-1}\left(\frac{c}{\sqrt{-k}}\right) & k < 0.
\end{cases} \]
\end{proposition}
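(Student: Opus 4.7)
The plan is to deduce Proposition~\ref{prop:dekster_comparison} as a direct specialization to dimension two of the Corollary to Theorem~1 of Dekster. The bulk of the argument is bookkeeping: identifying the extremal configuration appearing in Dekster's statement with the quantity $R_0(c,k)$ as defined here.

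First, I would set up the model comparison. In the simply connected surface $\M_k^2$ of constant Gaussian curvature $k$, a short Jacobi field computation shows that a geodesic disk of radius $r$ has boundary of constant geodesic curvature equal to $\ct_k(r)$. Solving $\ct_k(r) = c$ produces $r = R_0(c,k)$, and the three explicit formulas in the proposition are precisely the inverses of $\ct_k$ in the cases $k>0$, $k=0$, $k<0$. The hypothesis $k > -c^2$ is exactly the condition under which $\ct_k(r) = c$ admits a finite positive solution: for $k \geq 0$ it holds automatically, while for $k < 0$ it is equivalent to $c > \sqrt{-k}$, which is what makes $\coth^{-1}(c/\sqrt{-k})$ well defined. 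Thus $2 R_0(c,k)$ is exactly the intrinsic diameter of the model-space disk in $\M_k^2$ whose boundary has constant geodesic curvature $c$.

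Second, I would apply Dekster's Corollary. Under the assumptions $K_g \geq k$ in $\Omega$ and $\partial \Omega$ being $c$-convex with respect to $\Omega$ (with $k > -c^2$), Dekster's Corollary to Theorem~1 bounds the length of any geodesic of $M$ lying in $\Omega$ by the diameter of the corresponding extremal disk in the space form of curvature $k$. After the identification in the previous paragraph, this upper bound reads $2 R_0(c,k)$, which is the conclusion.

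The main (and essentially only) obstacle is that Dekster works in arbitrary dimension and parametrizes his comparison quantity in terms that do not literally match $R_0(c,k)$. The translation amounts to: (i) specializing his sectional curvature comparison to the surface case, where it becomes a Gaussian curvature comparison; and (ii) verifying, via the inversion of $\ct_k$ performed above, that the "extremal radius" appearing in Dekster's statement coincides with $R_0(c,k)$ in the three curvature regimes. Once this matching is done the proposition follows immediately as a citation.
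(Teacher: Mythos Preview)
Your proposal is correct and matches the paper's approach exactly: the paper simply states that the proposition ``follows from the Corollary to Theorem~1 in \cite{dekster}'' and records the formula for $R_0(c,k)$, without spelling out the translation. Your explanation of why $R_0(c,k)$ is the radius of the model disk of boundary curvature $c$ (via inverting $\ct_k$) and why the hypothesis $k>-c^2$ is needed is more detailed than what the paper provides, but it is precisely the intended identification.
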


As in \S\ref{ssec:exclude_c2}, we note that, in the case of Configuration 2, $\Omega$ is a connected region in $M$, with $c$-convex boundary $\partial\Omega=\gamma(S^1)$. Let $p$ and $q$ be the points in the boundary of $\Omega$ corresponding to the self-intersection point $x_0$, and $\sigma$ be a length-minimizing curve in $\Omega$ from $p$ to $q$. Then $\sigma$ is a geodesic loop in $M$ of length $L \geq 2\,\mathrm{inj}(M, g)$.

Note that we can easily approximate $\Omega$ by connected regions with smooth $c$-convex boundary. For example, we can `round off the corner' at $x_0$ by smooth arcs contained in $\Omega\cap B_r(x_0)$ with curvature at least $c$ for $0<r\leq r_c$. The resulting set $\tilde{\Omega}_r$ is a connected region with smooth $c$-convex boundary, and $\sigma_r\coloneqq\sigma\cap\tilde{\Omega}_r$ is a geodesic of $M$ which lies in $\tilde{\Omega}_r$. Hence, by Proposition \ref{prop:dekster_comparison}
\begin{equation}
    \mathrm{length}(\sigma_r)\leq 2 R_0(c,\min K_g).
\end{equation}

However, as it is clear by the construction of $\tilde{\Omega}_r$, $\mathrm{length}(\sigma_r)\to\mathrm{length}(\sigma)$ as $r\to 0$.
Hence, 
\begin{equation}\label{eqn:geometric_constraint}
    2\,\mathrm{inj}(M,g)\leq\mathrm{length}(\sigma)\leq 2 R_0(c,\min K_g),
\end{equation}
which completes the proof of Theorem \ref{thm:general_surface}.

\section{Positive Curvature}\label{sec:positive_curvature}

We shall now restrict to the case of positive ambient curvature. Throughout this section, we let $M=S^2$ and assume without loss of generality that $\max K_g=1$.

In order to prove Theorem \ref{thm:positive_curvature}, we produce a competitor sweepout $\{D_t\}_{t\in [0,1]}$ to show Configuration 2 cannot achieve the first min-max width under the assumptions of the theorem.

The outline of the construction is as follows.

\subsection{Outline of the construction}\label{ssec:outline} Suppose we are in the case of Configuration 2, so $\Omega$ is a connected region in $S^2$, with a $c$-convex boundary $\partial\Omega=\gamma(S^1)$, and let $p$ and $q$ be the points in the boundary of $\Omega$ corresponding to the self-intersection point $x_0$. Let us represent $S^1$ as the closed interval $[0,2\pi]$ with endpoints identified and let us suppose $x_0=\gamma(0)=\gamma(\theta_0)=\gamma(2\pi)$ where $\theta_0$ lies in the open interval $(0,2\pi)$.
We can then write:
\begin{equation*}
    \partial \Omega=\gamma^1\cup\gamma^2
\end{equation*}
where both $\gamma^1\coloneqq\gamma([0,\theta_0])$ and $\gamma^2\coloneqq\gamma([\theta_0,2\pi])$ are embedded loops in $S^2$, based at $x_0$.

Before we describe the construction of the sweepout, let us also note that by the classical Jordan-Schoenflies Theorem, any Jordan curve $\Gamma$ in $S^2$ separates $S^2$ into the union of two topological disks, $D$ and $S^2\setminus D$, having $\Gamma$ as their common boundary. 
Finally, if $\{\Gamma_t\}$ is a continuous family of Jordan curves in $S^2$ with uniformly bounded length, we can select a component $D_t$ of $S^2\setminus\Gamma_t$ in such a way that $t\mapsto \mathbf{1}_{D_t}$ is continuous in $L^1$. 

In order to construct the competitor sweepout, we start with the following continuous paths of embedded loops in $S^2$ with controlled lengths.  The corresponding disks $\{D_t\}$ will provide the desired sweepout of $S^2$.
\begin{enumerate}
    \item We connect a constant loop (with corresponding $D_0=\emptyset$) to $\gamma^1$ by $\{\gamma^1_t\}$.
    \item We connect $\gamma^1$ to $\gamma^2$ by $\{\sigma_t\}$.
    \item We connect $\gamma^2$ to a constant loop (with corresponding $D_1=S^2$) by $\{\gamma^2_t\}$. 
\end{enumerate}

We shall show that along path (3), $\cA^c$ stays below its initial value $\cA^c(D^2)$, where $D^2$ is the component of $S^2\setminus\gamma^2$ which contains $\Omega$. Note that $$\cA^c(D^2)<\cA^c(\Omega).$$
As for paths (1) and (2), observe that for all $t$ we have $\cA^c(D_t)\leq\mathrm{length}(\partial D_t)$.
Hence,
\begin{equation}\label{eqn:sup_A^c}
    \sup_t\cA^c(D_t) \leq \max\{\sup_t\mathrm{length}(\gamma^1_t),\sup_t\mathrm{length}(\sigma_t), \length(\gamma^2)\}.
\end{equation}
    
We now describe the three paths in detail.

\begin{figure}
\centering
\begin{tikzpicture}
    \fill[gray!10!white] (-5.5, .97) arc [start angle=75.5, end angle=-75.5, x radius=2, y radius = 1]
        .. controls (-5.6, -.9) and (-5.9, -.5) .. (-6, 0)
        .. controls (-5.9, .5) and (-5.6, .9) .. (-5.5, .97);
    \fill[gray!25!white] (-5.5, .97) arc [start angle=75.5, end angle=-75.5, x radius=2, y radius = 1]
        .. controls (-5.2, -.5) and (-5.2, .5) .. (-5.5, .97);
    \draw (-6, 0) ellipse (2 and 1);
    \filldraw (-6, 0) circle [radius=.02];
    \draw[dashed] (-6, 0) .. controls (-5.9, .5) and (-5.6, .9) .. (-5.5, .97);
    \draw[dashed] (-6, 0) .. controls (-5.9, -.5) and (-5.6, -.9) .. (-5.5, -.97);
    \draw (-5.5, .97) .. controls (-5.2, .5) and (-5.2, -.5) .. (-5.5, -.97);
    \draw (-6, -1.5) node {$\gamma^1$};

    \draw[->] (-3.5, 0) -- (-2.5, 0);

    \fill[gray!25!white] (0, 1) arc [start angle=90, end angle=-90, x radius=2, y radius = 1];
    \draw (0, 0) ellipse (2 and 1);
    \draw (0, 1) -- (0, -1);
    \draw (0, -1.5) node {$\sigma_t$};

    \draw[->] (2.5, 0) -- (3.5, 0);

    \fill[gray!15!white] (5.5, .97) arc [start angle=180-75.5, end angle=-180+75.5, x radius=2, y radius = 1]
        .. controls (5.2, -.5) and (5.2, .5) .. (5.5, .97);
    \fill[gray!25!white] (5.5, .97) arc [start angle=180-75.5, end angle=-180+75.5, x radius=2, y radius = 1]
        .. controls (5.6, -.9) and (5.9, -.5) .. (6, 0)
        .. controls (5.9, .5) and (5.6, .9) .. (5.5, .97);
    \draw (6, 0) ellipse (2 and 1);
    \filldraw[color=gray] (6, 0) circle [radius=.02];
    \draw[dashed, color=gray] (6, 0) .. controls (5.9, .5) and (5.6, .9) .. (5.5, .97);
    \draw[dashed, color=gray] (6, 0) .. controls (5.9, -.5) and (5.6, -.9) .. (5.5, -.97);
    \draw (5.5, .97) .. controls (5.2, .5) and (5.2, -.5) .. (5.5, -.97);
    \draw (6, -1.5) node {$\gamma^2$};
\end{tikzpicture}
\caption{An illustration of the length min-max sweepout $\sigma_t$.}
\label{fig:length_sweepout}
\end{figure}
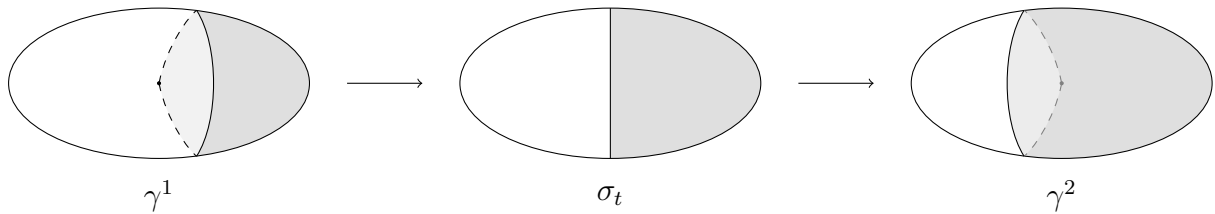

\subsection{Path 1} In order to construct path 1, we use curve shortening flow.

Let $\sigma$ be a length-minimizing curve in $\Omega$ from $p$ to $q$, so that $\sigma$ is an embedded geodesic loop in $S^2$, based at $x_0$. By a previous observation, $\sigma$ separates $S^2$ into the union of two topological disks having $\sigma$ as their common boundary and $\sigma$ is weakly convex to (at least) one of them\footnote{$\sigma$ is weakly convex to both disks if and only if $\sigma$ is a closed geodesic of $(S^2,g)$.}. Let this disk be $\Delta^1$. Also, note that exactly one of $\gamma^1$ and $\gamma^2$ lies entirely in the closure of $\Delta^1$.

By reparametrizing $\gamma$ in order to swap the curves $\gamma^1$ and $\gamma^2$ if needed, let $\gamma^1\subset\overline{\Delta^1}$.

We want to run curve shortening flow with $\gamma^1$ as initial condition.

We first approximate $\gamma^1$ by a smooth embedded Jordan curve $\tilde{\gamma}^1$ so that
\begin{itemize}
    \item $\tilde{\gamma}^1$ lies in the interior of $\Delta^1$, 
    \item $\length(\tilde{\gamma}^1)<\length(\gamma^1)$,
    \item $\tilde{\gamma}^1$ is homotopic to $\gamma^1$ through smooth embedded Jordan curves lying in $\Delta^1$, with length bounded above by the length of $\gamma^1$.
\end{itemize}
We can easily do this by `rounding off the corner' at $x_0$. 

Let $\tilde{D}^1$ be the unique component of $S^2\setminus\tilde{\gamma}^1$ which lies in the interior of $\Delta^1$.

Let us now run curve shortening flow with the smooth embedded closed curve $\tilde{\gamma}^1$ as initial condition. By the the work of Gage-Hamilton, Gage and Grayson (\cite{gagehamilton}, \cite{gage} and \cite{grayson}), this produces a maximal flow $\{\Gamma^1_t\}_{t\in[0,T_1)}$ , with $\length(\Gamma_t^1
)\leq\length(\tilde{\gamma}^1)<\length(\gamma^1)$ for all $t\in[0,T_1)$ and each $\Gamma^1_t$ bounds a disk $\tilde{D}^1_t$ so that
\begin{itemize}
    \item $t\to \Gamma_t^1$ is continuous in the flat topology,
    \item $t\to \mathbf{1}_{\tilde{D}^1_t}$ is continuous in $L^1$;
    \item $\tilde{D}^1_0=\tilde{D}^1$, as defined above.
\end{itemize}

By the maximum principle, as $\sigma$ is convex to $\Delta^1$, the curve $\Gamma_t$ lies in $\Delta^1$ for all $t\in[0,T_1)$. However, as $\min K_g>0$, there are no closed geodesics of $(S^2,g)$ entirely contained in the interior of $\Delta^1$. Hence, by \cite[Theorem 0.1]{grayson}, $T_1<+\infty$ and $\Gamma^1_t$ converges to a point curve in $\Delta^1$, and $\tilde{D}^1_t\to\emptyset$.

By reversing and rescaling time, this produces the desired path between the empty loop and $\tilde{\gamma}^1$. Finally, by gluing this path together with the approximating homotopy, we have constructed a continuous path of embedded loops in $S^2$, joining a constant loop to $\gamma^1$, with length bounded above by $\length(\gamma^1)$. 

\subsection{Path 3} We observe that there is a unique component $D^2$ of $S^2\setminus \gamma^2$ which contains $\Omega$. As noted above, this component is a topological disk with boundary $\gamma^2$. 

Then, note that $\gamma^2$ is $c$-concave (see \cite[page 3]{KL}) to the disk $D^2$. By modifying  $\gamma^2$ and the set $D^2$ as in the construction of $\Omega^{1,-}$ in \ref{ssec:cut-paste}, we obtain an open disk $D^{2,-}$ with piecewise smooth $c$-concave boundary, and a point on the boundary with vanishing geodesic curvature. Then, \cite[Lemma 3.2]{KL} provides a continuous family of sets interpolating between $D^{2,-}$ and $S^2$ with $\cA^c$ bounded above by its initial value $\cA^c(D^{2,-})<\cA^c(D^2)$.

\subsection{Path 2} Finally, we want to connect $\gamma^1$ to $\gamma^2$ by a continuous path $\{\sigma_t\}$ of embedded loops in $\Omega$ with controlled length. 

As in \ref{ssec:exclude_c2}, we view $\Omega$ intrinsically as a surface with strictly convex boundary, with $p$ and $q$ being the points in the boundary of $\Omega$ corresponding to the self intersection point $x_0$. We consider sweepouts of $\Omega$ by curves connecting $p$ to $q$ in $\Omega$, starting at $\gamma^1$ and ending at $\gamma^2$.

Let $\mathcal{P}$ be the space of smooth embedded curves in $\overline{\Omega}$ from $p$ to $q$, equipped with the $W^{1,2}$ topology (see Appendix \ref{app:min-max}). Let $\mathcal{E}$ be the space of continuous paths in $\mathcal{P}$ from $\gamma^1$ to $\gamma^2$. We define
\[ w_0(\Omega) := \inf_{\Gamma \in \mathcal{E}} \max_t \mathrm{length}(\Gamma_t). \]




We shall prove the following result in Appendix (\ref{app:min-max}).

\begin{proposition}\label{prop:min-max_Omega}
There is a sequence $\{\Gamma^j\}\subset\mathcal{E}$ such that 
\[\max_{t\in [0,1]}\length(\Gamma^j_t)\to w_0(\Omega)\]
as $j\to\infty$. Moreover, if
\[w_0(\Omega)>\max\{\length(\gamma^1), \length(\gamma^2)\},\]
then there is an embedded geodesic $\tilde{\sigma}$ in $\Omega$ joining $p$ to $q$, such that
    \begin{equation}
        \length(\tilde{\sigma})=w_0(\Omega).
    \end{equation}
\end{proposition}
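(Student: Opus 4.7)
The plan is to adapt the classical Birkhoff--Lusternik--Schnirelmann min-max to geodesic segments with fixed endpoints in a surface with strictly convex boundary.

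First, I would construct a pulled-tight minimizing sequence. The key ingredient is a continuous length-non-increasing deformation $\Phi \colon \mathcal{P}\to\mathcal{P}$ built by Birkhoff-style shortening: subdivide each embedded curve into short arcs at scale $\leq r_c$ and replace each such arc by the unique length-minimizing geodesic of $\overline{\Omega}$ joining its endpoints, with small perturbations to preserve embeddedness. Strict $c$-convexity of $\partial\Omega$ together with Lemma \ref{lem:inj_c_radius} ensures that the replacement arcs stay in $\overline\Omega$, are unique at this scale, and that $\Phi$ strictly decreases length unless the curve is already a piecewise geodesic. Starting from any infimizing sequence for $w_0(\Omega)$ and iterating $\Phi$ slicewise while interpolating to preserve continuity of the sweepout produces a sequence $\{\Gamma^j\}\subset\mathcal{E}$ with $\max_t \length(\Gamma^j_t) \to w_0(\Omega)$ and the additional ``pulled-tight'' property that slices whose length lies within $o(1)$ of the maximum are $C^0$-close to the set of geodesic segments of $\overline\Omega$ from $p$ to $q$ of length $w_0(\Omega)$.

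Next, pick $t_j\in[0,1]$ with $\length(\Gamma^j_{t_j}) \to w_0(\Omega)$, reparametrize each $\sigma^j := \Gamma^j_{t_j}$ by constant speed on $[0,1]$, and extract a uniform subsequential limit $\tilde\sigma \colon [0,1] \to \overline\Omega$ via Arzel\`a--Ascoli from the resulting uniform Lipschitz bound. The pulled-tight property, combined with a standard deformation-insertion argument (any compactly supported length-decreasing variation of $\tilde\sigma$ would, when spliced into nearby sweepouts, strictly lower $\max_t \length$ and contradict the definition of $w_0(\Omega)$), forces $\tilde\sigma$ to be a smooth geodesic of $\overline\Omega$ of length $w_0(\Omega)$. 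Strict $c$-convexity of $\partial\Omega$ then prevents $\tilde\sigma$ from touching $\partial\Omega$ except at its endpoints $p,q$: any interior contact would require $\tilde\sigma$ to be tangent to $\partial\Omega$ there, but a maximum principle comparison against the nonzero boundary curvature forces the geodesic to veer off into $\Omega$.

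For embeddedness, the hypothesis $w_0(\Omega) > \max\{\length(\gamma^1),\length(\gamma^2)\}$ rules out $\tilde\sigma \in \{\gamma^1,\gamma^2\}$. As a $C^0$-limit of embedded curves, $\tilde\sigma$ cannot have transverse self-intersections, since these would persist in the approximants $\sigma^j$ and contradict $\sigma^j \in \mathcal{P}$. A self-tangency of the geodesic $\tilde\sigma$ at an interior point would give a geodesic loop based there, whose cut-and-smooth resolution yields a strictly shorter piecewise embedded curve from $p$ to $q$; splicing this into the sweepouts via the pulled-tight property contradicts $w_0(\Omega)$ being the infimum. The main obstacle I anticipate is rigorously setting up the deformation $\Phi$ on the space $\mathcal{P}$ of embedded curves in $\overline\Omega$ with fixed endpoints in $\partial\Omega$: classical Birkhoff shortening is designed for closed curves on a closed manifold, so adapting it requires care to preserve embeddedness, fix the endpoints $p,q\in\partial\Omega$, and confine all replacement arcs to $\overline\Omega$. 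Strict $c$-convexity of $\partial\Omega$ is precisely what makes this feasible, providing the barrier that keeps replacements in $\overline\Omega$ and that forces the limit geodesic into the interior.
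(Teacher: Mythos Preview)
Your overall strategy coincides with the paper's: a fixed-endpoint Birkhoff curve-shortening process on curves in $\overline\Omega$ from $p$ to $q$, an ``almost maximal implies almost critical'' statement, and then extraction of a geodesic limit of length $w_0(\Omega)$. The convexity of $\partial\Omega$ is used exactly as you say, to confine replacement arcs and the limit geodesic to $\Omega$.

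The substantive difference is in how embeddedness is handled, and this is precisely the obstacle you flag. You propose to run the shortening process directly on $\mathcal{P}$, inserting ``small perturbations to preserve embeddedness'' after each geodesic replacement. The paper does not attempt this. Instead it runs the Birkhoff process in the larger space $X\subset W^{1,2}(I,\Omega)$ of arbitrary (not necessarily embedded) maps, obtains a minimizing sequence of sweepouts there, and only at the end post-processes each sweepout using a result of Chambers--Liokumovich (as packaged by Carlotto--De Lellis): any sweepout by continuous curves can be replaced, up to an arbitrarily small length penalty $\varepsilon$, by a sweepout through embeddings. This cleanly separates the analytic min-max from the topological embeddedness requirement, and avoids the delicate issue of keeping Birkhoff replacements injective, which your proposal leaves unresolved.

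For the embeddedness of the limit geodesic $\tilde\sigma$, your cut-and-smooth argument for self-tangencies is unnecessary. The paper simply observes that, by uniqueness for the geodesic equation, any self-intersection of a geodesic segment must be transverse; since $\tilde\sigma$ is a uniform limit of embedded curves, transverse self-intersections are impossible, and the conclusion follows immediately. Your splicing argument, by contrast, would require re-entering the sweepout machinery and is both longer and more fragile.
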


By Proposition \ref{prop:min-max_Omega}, for each positive integer $j$ there is a continuous path $\{\sigma^j_t\}$ of embedded loops based at $x_0$ connecting $\gamma^1$ to $\gamma^2$ so that
\[\max_{t\in[0,1]}\length(\sigma^j_t)\leq w_0(\Omega)+\frac{1}{j}.\]
For each $j$, there is a corresponding continuous path $\{D^j_t\}$ of open disks with $\partial D^j_t=\sigma^j_t$ where $D^j_t$ is chosen to be the unique component of $S^2\setminus\sigma^j_t$ such that $\gamma^1\subset \overline{D^j_t}$.

In particular, 
\[\cA^c(D^j_t)=\length(\sigma^j_t)-c\cH^2(D^j_t)<\length(\sigma^j_t)\leq w_0(\Omega)+\frac{1}{j}.\]

By concatenation with paths 1 and 3, for each positive integer $j$ we obtain a competitor sweepout $\{D^j_t\}$ such that
\begin{align*}
\begin{split}
    \sup_t\cA^c(D^j_t) &\leq \max\{\sup_t\mathrm{length}(\gamma^1_t),\sup_t\mathrm{length}(\sigma_t), \length(\gamma^2)\}\\
    &\leq \max\{\mathrm{length}(\gamma^1),w_0(\Omega)+\frac{1}{j}, \length(\gamma^2)\}.
\end{split}
\end{align*}
since $\length(\gamma^1_t)\leq\length(\gamma^1)$ by construction.

Therefore,
\begin{equation}\label{eqn:seq_competitors}
    W_c\leq\inf_j\sup_t\cA^c(D^j_t)\leq \max\{\mathrm{length}(\gamma^1),w_0(\Omega), \length(\gamma^2)\}.
\end{equation}

Finally, if $w_0(\Omega)>\max\{\length(\gamma^1)\length(\gamma^2)\}$, then by Proposition \ref{prop:min-max_Omega}, $w_0(\Omega)$ is the length of an embedded geodesic loop based at $x_0$. This will be crucial in the next subsection.

\subsection{Estimates for the first \textit{c}-min-max width} We use the competitor sweepout $\{D_t\}$ to provide estimates for the first $c$-min-max width $W_c$ of $(S^2,g)$.

Assume $\Omega$ achieves the min-max width $W_c$, so that 
\begin{align}\label{eqn:width}
\begin{split}
    W_c = \length(\gamma^1)+\length(\gamma^2)-c\cH^2(\Omega).
\end{split}
\end{align}

\subsubsection{Lower bound} By \eqref{eqn:config2_length_inj}, for each $i=1,2$ 
\[\length(\gamma^i)>2\,\mathrm{inj}(S^2,g).\]
Moreover, since $0<\min K_g\leq K_g\leq 1$, by a classical result of Klingenberg (see, e.g. \cite[Theorem 6.5.1]{petersen}), $\mathrm{inj}(S^2,g)\geq \pi$, so 
\begin{equation}\label{eqn:length_lowerbound}
    \length(\gamma^i) > 2\pi.
\end{equation}

On the other hand, we can also apply the Gauss-Bonnet Theorem to the region $\Omega$:
\begin{equation}\label{eqn:GB}
    (\min K_g)\,\cH^2(\Omega)\leq\int_\Omega K_g\ d\cH^2=2\alpha-c\,\length(\gamma)\leq 2\pi-4\pi c
\end{equation}
and obtain 
\begin{equation}\label{eqn:area_upperbound}
    \cH^2(\Omega)\leq\frac{2\pi(1-2c)}{\min K_g}.
\end{equation}
Hence, by \eqref{eqn:width}, \eqref{eqn:length_lowerbound} and \eqref{eqn:area_upperbound}, we get
\begin{equation}\label{eqn:width_lowerbound}
    W_c> 4\pi-\frac{2\pi c(1-2c)}{\min K_g}
\end{equation}

\subsubsection{Upper bound} By \eqref{eqn:seq_competitors},
\begin{equation*}
    W_c\leq \max\{\mathrm{length}(\gamma^1),w_0(\Omega), \length(\gamma^2)\}.
\end{equation*}

Assume without loss of generality $\max\{\length(\gamma^1), \length(\gamma^2)\}=\length(\gamma^1)$.

We have two cases to analyze.
\begin{itemize}
    \item If $w_0(\Omega)=\length(\gamma^1)$, then $W_c\leq \length(\gamma^1)$, so by \eqref{eqn:width},
    \[\length(\gamma^1)+\length(\gamma^2)-c\cH^2(\Omega)\leq\length(\gamma^1).\]
    So, combining this with \eqref{eqn:length_lowerbound} and \eqref{eqn:area_upperbound}, we get
    \begin{equation}\label{eqn:condition1}
        2\pi<\length(\gamma^2)\leq c\cH^2(\Omega)\leq\frac{2\pi c(1-2c)}{\min K_g}.
    \end{equation}
    \item On the other hand, if $w_0(\Omega)>\length(\gamma^1)$, then by Proposition \ref{prop:min-max_Omega}, the width is achieved by the length of a geodesic $\tilde{\sigma}$ from $p$ to $q$. By Proposition \ref{prop:dekster_comparison}, and the approximation argument in Section \ref{sec:comparison}, $\length(\tilde{\sigma})\leq 2R_0(c,\min K_g)$. Hence,
    \begin{equation}
        W_c\leq w_0(\Omega)\leq 2 R_0(c,\min K_g)=\frac{2}{\sqrt{\min K_g}}\cot^{-1}\left(\frac{c}{\sqrt{\min K_g}}\right).
    \end{equation}
    By combining this with \eqref{eqn:width_lowerbound}, we get 
    \begin{equation}\label{eqn:condition2}
        2\pi-\frac{\pi c(1-2c)}{\min K_g}< \frac{1}{\sqrt{\min K_g}}\cot^{-1}\left(\frac{c}{\sqrt{\min K_g}}\right).
    \end{equation}
\end{itemize}

Finally, we can combine the constraints given by \eqref{eqn:condition1} and \eqref{eqn:condition2} with the geometric constraint \eqref{eqn:geometric_constraint}, which in positive ambient curvature gives
\begin{equation}\label{eqn:geometric_constraint2}
    c\leq\sqrt{\min K_g}\cot(\pi\sqrt{\min K_g}).
\end{equation}

In summary, if either of the following two conditions hold:
\begin{equation}\label{eqn:conditions}
    \begin{cases}
        c > \sqrt{\min K_g}\cot(\pi\sqrt{\min K_g})\\
        2\pi \geq\max\left\{\frac{2\pi c(1-2c)}{\min K_g},\frac{\pi c(1-2c)}{\min K_g}+\frac{1}{\sqrt{\min K_g}}\cot^{-1}\left(\frac{c}{\sqrt{\min K_g}}\right) \right\},
    \end{cases}
\end{equation}
then we reach a contradiction to the assumption that $\Omega$ achieves the first $c$-min-max width. Figure \ref{fig:comparison_pos} illustrates the region defined by (\ref{eqn:conditions}). We emphasize that this region contains all positive values of $c$ when $\min K_g \geq .1167$, and it contains all positive values of $\min K_g$ when $c \geq 1/\pi$.

\section{Index Bounds}\label{sec:index}
The Morse index upper bound follows from the general deformation arguments of \cite{MNindex} (see also the modification to the $\cA^c$ setting in \cite{zhou}). In particular, since we directly prove regularity, we can avoid the restriction to ambient dimension at least 3, which is only assumed in those papers to ensure regularity.

Since (\ref{eqn:formula_general_surface}) implies $\min K_g + c^2 > 0$, it follows from taking a constant test function in the second variation formula that any smooth closed embedded curve of constant curvature $c$ satisfying the hypotheses of Theorem \ref{thm:general_surface} or Theorem \ref{thm:positive_curvature} is not stable. Hence, the $\cA^c$ Morse index is 1.

\appendix
\section{Min-max for length in surfaces with convex boundary}\label{app:min-max}

In this appendix, we prove Proposition \ref{prop:min-max_Omega}. Let the setting be as in Section \ref{sec:positive_curvature}.  We can also assume without loss of generality that $\Omega$ is isometrically embedded in some Euclidean space $\R^N$. Let $p$ and $q$ be the points in the boundary of $\Omega$ corresponding to the self intersection point $x_0$. 

Let $I$ be the closed unit interval $[0,1]$, and let $X$ be the space 
\[X=\{\sigma:I\to\Omega\mid \sigma\in W^{1,2}(I,\Omega),\sigma(0)=p, \sigma(1)=q\}\footnote{Note that by the Sobolev embedding theorem, $W^{1,2}(I,\Omega)$ embeds continuously into $ C^{0,\frac{1}{2}}(I,\Omega)$. Thus, we can assume without loss of generality that elements of $X$ are continuous functions which satisfy the prescribed boundary conditions.}\subset W^{1,2}(I,\R^N)\]
endowed with the $W^{1,2}$ norm.
Note that for a map $\sigma\in X$ we can define its length and Dirichlet energy by:
\[\length(\sigma)=\int_0^1|\sigma'(\tau)|\ d\tau\]
and
\[\mathrm{E}(\sigma)\coloneqq\int_0^1|\sigma'(\tau)|^2\ d\tau.\]
H\"{o}lder's inequality implies that for all $\sigma\in X$, $\length(\sigma)^2\leq\mathrm{E}(\sigma)$, with equality if and only if $|\sigma'|=\length(\sigma)$ almost everywhere.

Let us also denote by $X_K$ the subset of $X$ consisting of maps with energy bounded above by $K^2$ (and, hence, length bounded above by $K$).

\subsection{The (fixed-endpoint) curve shortening map}

Let $L$ be a large positive integer to be specified later.
As in Colding-Minicozzi \cite{CM}, we say a map $\sigma:I\to\Omega$ is \textit{piecewise-linear with $L$ breaks} if there is a partition $0=\tau_0<\tau_1<\dots<\tau_{L}<\tau_{L+1}=1$ of $I$ such that for each $i\in\{0, \dots, L\}$, $\sigma\vert_{[\tau_{i},\tau_{i+1}]}$ is a constant-speed geodesic segment in $\Omega$ (i.e. in this terminology, a \textit{linear map}).

For $r_0\in(0,\mathrm{inj}(S^2,g))$, we define $\Lambda_{L,r_0}\subset X$ to be the space of piecewise linear maps with $L$ breaks, such that the length of each geodesic segment is at most $r_0$, and endow it with the $W^{1,2}$ topology. Finally, let $G\subset\Lambda_{L,r_0}$ be the set of immersed geodesics in $\Omega$ between $p$ and $q$, with length at most $(L+1)r_0$.

We shall from now on denote $\Lambda_{L,r_0}$ simply by $\Lambda$.

As in Colding-Minicozzi \cite{CM} (see also \cite{xinFB}), we now define a (fixed-endpoint) curve shortening map $\Psi:X_{(L+1)r_0}\to\Lambda$, which mimics the construction of Birkhoff's curve shortening process.

\subsubsection{Construction of the (fixed-endpoint) curve shortening map} Our construction of the fixed-endpoint curve shortening map $\Psi$ is essentially analogous to the constructions in \cite[Chapter 5, Section 2.2]{CM} and \cite[3.1]{xinFB}. We briefly summarize the main steps.

Let $\sigma\in X_{(L+1)r_0}$.

\begin{itemize}
\setlength{\itemindent}{1em}
    \item[\textbf{Step 0.}] Partition $I=[0,1]$ into $2L+2$ intervals of equal length, by choosing break points $\tau_i=\frac{i}{2L+2}$.
    \item[\textbf{Step 1.}] For $j=0,1,\dots, L$, replace $\sigma\vert_{[\tau_{2j},\tau_{2j+2}]}$ by the unique minimizing geodesic in $\Omega$ between $\sigma(\tau_{2j})$ and $\sigma(\tau_{2j+2})$, to obtain a piecewise linear map $\sigma_e:I\to\Omega$.
    \item[\textbf{Step 2.}] Reparametrize $\sigma_e$ to get a constant-speed curve $\tilde{\sigma}_e$ with the same image.
    \item[\textbf{Step 3.}] For $i=0,\dots, 2L+2$, define $\tilde{\tau}_i$ by $\tilde{\sigma}_e(\tilde{\tau}_i)=\sigma_e(\tau_i)$, and, for $j=1,\dots, L$, replace $\tilde{\sigma}_e\vert_{[\tilde{\tau}_{2j-1},\tilde{\tau}_{2j+1}]}$ by the unique minimizing geodesic in $\Omega$ between $\tilde{\sigma}_e(\tilde{\tau}_{2j-1})$ and $\tilde{\sigma}_e(\tilde{\tau}_{2j+1})$. This defines a piecewise linear map $\sigma_o:I\to\Omega.$
    \item[\textbf{Step 4.}] Finally, reparametrize $\sigma_o$ to get a constant-speed curve $\tilde{\sigma}_o$.
\end{itemize}
Define
\begin{equation}
    \Psi(\sigma)\coloneqq\tilde{\sigma}_o.
\end{equation}

\subsubsection{Properties of the (fixed-endpoint) curve shortening map} The main properties of the map $\Psi$ are summarized in the following theorem, which follows directly from \cite[Section 3]{xinFB}, once we notice that, since the boundary of $\Omega$ is strictly convex, if the image of $\sigma\in X_{(L+1)r_0}$ lies in $\Omega$, then so does the image of $\Psi(\sigma)$.

\begin{theorem}\label{thm:BCSP} The (fixed-endpoint) curve shortening map $\Psi:X_{(L+1)r_0}\to\Lambda$ satisfies the following properties:
    \begin{enumerate}
        \item $\Psi$ is continuous (in the $W^{1,2}$ topology).
        \item For all $\sigma\in X_{(L+1)r_0}$, $\Psi(\sigma)$ is homotopic to $\sigma$ along maps $\{\sigma_t\}_{t\in [0,1]}$ in $X_{(L+1)r_0}$. 
        Also, for $t_2\geq t_1$, $\length(\sigma_{t_2})\leq\length(\sigma_{t_1})$.
        \item There is a continuous function $\phi:[0,+\infty)\to[0,+\infty)$, with $\phi(0)=0$, such that
        \begin{equation}
            \dist(\sigma,\Psi(\sigma))\leq\phi\left(\frac{\length(\sigma)^2-\length(\Psi(\sigma))^2}{\length(\Psi(\sigma))^2}\right).
        \end{equation}
        \item $\Psi(\sigma)=\sigma$ if and only if $\sigma\in G$.
        \item For any $\varepsilon>0$ there exists $\delta>0$ such that if $\sigma\in\Lambda$ and $\dist(\sigma, G)\geq \varepsilon$, then
        \begin{equation}
            \length(\Psi(\sigma))\leq\length(\sigma)-\delta.
        \end{equation}
    \end{enumerate}
\end{theorem}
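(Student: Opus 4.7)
The plan is to adapt the Birkhoff-style argument in \cite[Section 3]{xinFB} essentially verbatim, with one essential addition: we must verify that each of the four steps composing $\Psi$ stays inside $\Omega$. This is where strict convexity of $\partial\Omega$ (which holds since the boundary has curvature $c>0$ with respect to $\Omega$) enters. The key geometric lemma is that for any $x, y \in \overline{\Omega}$ with $d(x,y) < r_0 < \mathrm{inj}(S^2,g)$, the unique minimizing geodesic of $(S^2, g)$ from $x$ to $y$ lies in $\overline{\Omega}$: if it touched $\partial \Omega$ at an interior point, strict convexity forces $\partial\Omega$ to curve strictly away from the geodesic into $\Omega$, so one could shorten the path by pushing into $\Omega$, a contradiction to minimality. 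Applying this at Steps 1 and 3 of the construction shows $\Psi$ is well-defined as a map into $\Lambda$, and in fact the same argument applied to each intermediate replacement geodesic shows that the natural homotopies used below also stay inside $\overline{\Omega}$.

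For property (1), continuity in the $W^{1,2}$ norm reduces to continuity of each of the four operations composing $\Psi$: Steps 1 and 3 depend continuously on breakpoints, since minimizing geodesics depend smoothly on their endpoints inside the injectivity radius; Steps 2 and 4 (reparametrization to constant speed on each subinterval) are continuous on $W^{1,2}$ when restricted to piecewise linear curves with nonzero-length segments. For property (2), the homotopy is built by concatenating the natural homotopies within each of the four steps. In Steps 1 and 3 we interpolate between each broken segment and its replacing minimizing geodesic: in exponential normal coordinates at an endpoint, this is linear interpolation of the outgoing tangent vector, and length is non-increasing in the interpolation parameter since each intermediate segment is a convex combination of the original and a length-minimizer. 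In Steps 2 and 4 we linearly interpolate the reparametrization, which preserves length. Staying in $\overline{\Omega}$ throughout is guaranteed by the convexity argument above.

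Property (4) is the easiest: if $\Psi(\sigma) = \sigma$, then $\sigma$ is piecewise linear and Step 1 is the identity on $\sigma$, so every $\sigma(\tau_{2j+1})$ already lies on the minimizing geodesic from $\sigma(\tau_{2j})$ to $\sigma(\tau_{2j+2})$. Likewise, Step 3 being the identity forces the remaining breakpoints to align. This eliminates all break angles, so $\sigma$ is a smooth geodesic in $G$; conversely, elements of $G$ are clearly fixed. Property (3) is the technical heart: in geodesic normal coordinates, the excess length of a piecewise linear curve over its chord is (to leading order) a sum of squares of break angles, and conversely the maximum $C^0$ deviation between a broken curve and its chord is controlled by the same angles. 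Combining the four steps yields a modulus $\phi$, continuous and vanishing at zero, depending only on $L$, $r_0$, and the geometry of $\overline{\Omega}$. Property (5) then follows from (3), (4), and a compactness argument: the subset of $\Lambda$ consisting of curves with length at most $(L+1)r_0$ is compact in $W^{1,2}$, $\length - \length\circ\Psi$ is continuous and vanishes exactly on $G$ by (3)-(4), so it attains a positive infimum $\delta(\varepsilon)$ on the closed set $\{\sigma : \mathrm{dist}(\sigma, G) \geq \varepsilon\}$.

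The main obstacle is the quantitative bound in (3): constructing $\phi$ requires tracking the effect of each of the four Birkhoff steps on both the length and the $C^0$-displacement of the curve, uniformly in $\sigma$. This is essentially the content of \cite[Section 3]{xinFB}, so our task is to check that the estimates proven there (formulated for free-boundary curves on surfaces) transfer to the fixed-endpoint setting, and that the presence of $\partial\Omega$ does not introduce any loss. Given the geometric lemma above, this transfer is routine: the boundary is never touched by any of the relevant geodesic segments, so the estimates are purely local computations in geodesic normal coordinates and are unaffected by the presence of $\partial\Omega$.
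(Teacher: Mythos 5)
Your proposal takes essentially the same route as the paper, which records exactly one new observation (strict convexity of $\partial\Omega$ keeps the fixed-endpoint Birkhoff process inside $\Omega$) and defers all five properties to \cite[Section 3]{xinFB}. The extra detail you supply is consistent, though the proof of the containment lemma is incomplete as stated: the ``push into $\Omega$'' variation applies to a shortest-in-$\overline\Omega$ path that runs along a boundary arc, not to a geodesic merely tangent to $\partial\Omega$ at an isolated interior point (which is already a local length minimizer); that tangency case is instead excluded because a geodesic tangent to a strictly convex $\partial\Omega$ lies strictly outside $\overline\Omega$ near the contact point, and for the overall containment one needs $r_0$ small compared to the convexity radius of $\partial\Omega$, not merely $r_0<\mathrm{inj}(S^2,g)$.
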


\begin{remark}
    In the statement of the theorem, and for the rest of this section, $\dist(\cdot,\cdot)$ refers to distance in the $W^{1,2}$-norm. By the embedding of $W^{1,2}$ into $C^{0,\frac{1}{2}}$, it also controls the $C^0$-distance.
\end{remark}

\subsection{Sweepouts}

We now want to consider \textit{sweepouts} of $\Omega$ connecting $\gamma^1$ to $\gamma^2$.

First, pick a large constant $K\gg\max\{\E(\gamma^1)^{1/2}, \E(\gamma^2)^{1/2}\}$ and define the initial space $\cS$ of sweepouts as follows
\begin{equation}\label{eqn:space_S}
    \cS\coloneqq\{\Gamma\in C^0(I,X_K)\mid \Gamma(0,\cdot)=\gamma^1\text{ and }\Gamma(1,\cdot)=\gamma^2\}.
\end{equation}
As $\Omega$ is a topological disk in $S^2$, with boundary given by the union of $\gamma^1$ and $\gamma^2$, for large enough $K>0$, the set $\cS$ is clearly non-empty.
For each fixed $t\in I$, denote the map $\tau\to\Gamma(t,\tau)$ by $\Gamma_t$.
If $\Gamma\in\cS$, then $\max_{t\in[0,1]}\E(\Gamma_t)\leq K^2$, and, for all $t\in I, \tau_1,\tau_2\in I$, we have (by H\"{o}lder's inequality):
\begin{equation}
    \dist_{\Omega}(\Gamma_t(\tau_1), \Gamma_t(\tau_2))\leq |\tau_1-\tau_2|^{\frac{1}{2}}\E(\Gamma_t)^{\frac{1}{2}}\leq K |\tau_1-\tau_2|^{\frac{1}{2}}
\end{equation}
In particular, by choosing $L$ large enough, we can guarantee that $\dist_{\Omega}(\Gamma_t(\tau_1), \Gamma_t(\tau_2))<r_0$, if $|\tau_1-\tau_2|\leq\frac{1}{2L}$, and the argument in \cite[Lemma 4.1]{xinFB} guarantees that $\Psi\circ\Gamma:I\times I\to\Lambda$ is homotopic to $\Gamma$ via maps in $X_K$.

We can now refine $\cS$ by considering sweepouts in the class of piecewise linear maps.
\begin{equation}\label{eqn:space S_Lambda}
    \cS_\Lambda\coloneqq\{\Gamma\in C^0(I,\Lambda)\mid \Gamma(0,\cdot)=\Psi(\gamma^1)\text{ and }\Gamma(1,\cdot)=\Psi(\gamma^2)\}.
\end{equation}

\begin{remark}\label{rmk:X=PL}
Note that, by the previous observations, given $\Gamma\in\cS$, we can easily construct a corresponding sweepout $\tilde{\Gamma}\in\cS_\Lambda$, homotopic to $\Gamma$ via maps in $X_K$, by composing $\Gamma$ with the curve shortening map $\Psi$. Also, note that for all $t\in[0,1]$
\begin{equation}\label{eqn:X_to_PL}
    \length(\tilde{\Gamma}_t)\leq\length(\Gamma_t)
\end{equation}

Similarly, given $\Gamma\in\cS_\Lambda$, we can extend it to a sweepout $\hat{\Gamma}\in\cS$, homotopic to $\Gamma$ via maps in $X_K$, by concatenating a continuous length-nonincreasing homotopy between $\gamma^1$ and $\Psi(\gamma^1)$, the original sweepout $\Gamma$, and a continuous length-nondecreasing homotopy between $\Psi(\gamma^2)$ and $\gamma^2$\footnote{These continuous homotopies exist by Theorem \ref{thm:BCSP}.}. In this case,
\begin{equation}\label{eqn:PL_to_X}
    \max_{t\in[0,1]}\length(\hat{\Gamma}_t)=\max\{\length(\gamma^1), \max_{t\in[0,1]}\length(\Gamma_t), \length(\gamma^2)\}.
\end{equation}
\end{remark}

\subsection{Almost maximal implies almost critical} Using the machinery we have introduced, we can define the \textit{length-width} $W_0$ of $\Omega$ by 
\begin{equation}
    W_0=\inf_{\Gamma\in\cS}\max_{t\in [0,1]}\length(\Gamma_t)
\end{equation}
Note that $W_0\geq\max\{\length(\gamma^1), \length(\gamma^2)\}$.

From now on, assume that
\begin{equation}\label{eqn:mountain_pass}
    W_0>\max\{\length(\gamma^1), \length(\gamma^2)\}.   
\end{equation}

\begin{remark}
Note that we can define a `piecewise linear' length-width $W^\Lambda_0$ by 
\begin{equation}\label{eqn:PL_width}
    W^\Lambda_0=\inf_{\Gamma\in\cS_\Lambda}\max_{t\in[0,1]}\length(\Gamma_t).
\end{equation}
By \eqref{eqn:X_to_PL} and \eqref{eqn:PL_to_X}, $W_0=\max\{\length(\gamma^1), W^\Lambda_0, \length(\gamma^2)\}$. Hence, under the assumption that \eqref{eqn:mountain_pass} holds, 
\begin{equation}\label{eqn:X=PL}
    W_0=W^\Lambda_0.
\end{equation}
\end{remark}

Analogously to \cite[Theorem 5.6]{CM} and \cite[Theorem 1.6]{xinFB}, we have:
\begin{theorem}\label{thm:almost}
    Suppose \eqref{eqn:mountain_pass} is satisfied, then there exists a sequence $\{\Gamma^j\}\subset\cS$ of sweepouts such that $\max_{t\in [0,1]}\length(\Gamma^j_t)\to W_0$ as $j\to\infty$, and for any $\eps>0$, there exists $\delta>0$ such that if:
    \begin{itemize}
        \item $j>1/\delta$, and
        \item there is $t_0\in[0,1]$ such that $\length(\Gamma^j_{t_0})>W_0-\delta$,
    \end{itemize}
    then
    \begin{equation*}
        \dist(\Gamma^j_{t_0}, G)<\eps.
    \end{equation*}
\end{theorem}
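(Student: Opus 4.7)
The plan is to establish the pull-tight property via a classical Birkhoff-style min-max argument, treating the curve shortening map $\Psi$ from Theorem \ref{thm:BCSP} as a pseudo-gradient and using property (5) as a uniform Palais-Smale-type descent condition. By the identity \eqref{eqn:X=PL}, I may work throughout in the piecewise-linear space $\cS_\Lambda$, which enjoys good compactness on bounded-length subsets.

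The construction of $\{\Gamma^j\}$ proceeds as follows. Start with any $\tilde{\Gamma}^j \in \cS_\Lambda$ with $\max_t \length(\tilde{\Gamma}^j_t) \to W_0^\Lambda = W_0$. Define $\Gamma^j \in \cS$ by concatenating, in order: (i) the property (2) length-nonincreasing homotopy from $\gamma^1$ to $\Psi(\gamma^1)$, followed by its $\Psi$-image (a length-nonincreasing homotopy from $\Psi(\gamma^1)$ to $\Psi^2(\gamma^1)$); (ii) the path $\Psi \circ \tilde{\Gamma}^j$, whose endpoints are $\Psi^2(\gamma^1)$ and $\Psi^2(\gamma^2)$; and (iii) the symmetric reverse from $\Psi^2(\gamma^2)$ back to $\gamma^2$. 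By property (2) of Theorem \ref{thm:BCSP}, each tail has length bounded by $\max\{\length(\gamma^1), \length(\gamma^2)\}$, which is strictly less than $W_0$ by the mountain-pass assumption \eqref{eqn:mountain_pass}, while the middle piece has max length bounded by $\max_t \length(\tilde{\Gamma}^j_t) \to W_0$. Hence $\max_t \length(\Gamma^j_t) \to W_0$.

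To verify the pull-tight property I argue by contradiction. Suppose $\eps > 0$, $j_k \to \infty$, and $t_k \in [0,1]$ exist with $\length(\Gamma^{j_k}_{t_k}) > W_0 - 1/j_k$ and $\dist(\Gamma^{j_k}_{t_k}, G) \geq \eps$. For $k$ large, $W_0 - 1/j_k$ exceeds the tail bound, so $t_k$ lies in the middle piece and I may write $\Gamma^{j_k}_{t_k} = \Psi(\sigma_k)$ with $\sigma_k := \tilde{\Gamma}^{j_k}_{s_k}$. Combining $\length(\sigma_k) \leq \max_t \length(\tilde{\Gamma}^{j_k}_t) = W_0 + o(1)$ with $\length(\Psi(\sigma_k)) > W_0 - 1/j_k$, I deduce $\length(\sigma_k) - \length(\Psi(\sigma_k)) = o(1)$; by property (5) this forces $\dist(\sigma_k, G) < \eps$ for $k$ large. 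Compactness of $\Lambda \cap \{\length \leq W_0 + 1\}$ in $W^{1,2}$ (a closed bounded subset of the finite-dimensional space of $L$-break configurations, compactness following from continuous dependence of geodesic segments on endpoints via Lemma \ref{lem:inj_c_radius}) lets me extract a subsequence $\sigma_k \to \tau_*$ with $\length(\tau_*) = W_0$ and $\dist(\tau_*, G) \leq \eps$. Continuity of $\Psi$ (property (1)) gives $\Gamma^{j_k}_{t_k} \to \Psi(\tau_*)$, so $\length(\Psi(\tau_*)) = W_0$ and $\dist(\Psi(\tau_*), G) \geq \eps$. If $\dist(\tau_*, G) > 0$, property (5) applied with $\eps' := \dist(\tau_*, G)$ forces $\length(\Psi(\tau_*)) \leq W_0 - \delta'(\eps') < W_0$, a contradiction. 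Otherwise $\tau_* \in G$ (closedness of $G$ follows from ODE stability of geodesics in the compact set $\overline{\Omega}$), and property (4) yields $\Psi(\tau_*) = \tau_*$, contradicting $\dist(\Psi(\tau_*), G) \geq \eps > 0$.

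The main obstacle I expect is the compactness of $\Lambda \cap \{\length \leq W_0 + 1\}$ together with the closedness of $G$, both in $W^{1,2}$: these ultimately reduce to identifying $\Lambda$ with the finite-dimensional manifold of $L$-tuples of break points in $\overline{\Omega}$ via Lemma \ref{lem:inj_c_radius}, and verifying that convergence of break points implies $W^{1,2}$-convergence of the associated piecewise-linear maps. A secondary but essential point is the role of the mountain-pass assumption \eqref{eqn:mountain_pass}: without it, the endpoint homotopies could contain slices of length comparable to $W_0$, the hypothetical bad slice $t_k$ could fail to localize to the middle piece, and the identification $\Gamma^{j_k}_{t_k} = \Psi(\sigma_k)$ that drives the contradiction would be unavailable.
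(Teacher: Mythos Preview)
Your overall scheme matches the paper's: tighten a minimizing sequence by one application of $\Psi$, use the mountain-pass gap \eqref{eqn:mountain_pass} to localize a hypothetical bad slice $\Gamma^{j_k}_{t_k}=\Psi(\sigma_k)$ to the $\Psi$-image piece, and derive a contradiction from $\length(\sigma_k)-\length(\Psi(\sigma_k))\to 0$. The difference is in how the contradiction is closed. The paper uses property~(3) of Theorem~\ref{thm:BCSP}: once $\length(\sigma_k)-\length(\Psi(\sigma_k))\to 0$ with $\length(\Psi(\sigma_k))$ bounded away from zero, property~(3) gives $\dist(\sigma_k,\Psi(\sigma_k))\to 0$ directly, so property~(5) (which you already invoked to get $\dist(\sigma_k,G)\to 0$) plus the triangle inequality yields $\dist(\Psi(\sigma_k),G)\to 0$ and the contradiction is immediate. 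You never call on property~(3), and instead pass to a limit in $\Lambda$ via compactness and then use continuity of $\Psi$.

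That compactness detour has a gap. The set $\Lambda\cap\{\length\le W_0+1\}$ is \emph{not} compact in $W^{1,2}$: the definition of $\Lambda_{L,r_0}$ allows an arbitrary partition $0=\tau_0<\dots<\tau_{L+1}=1$, so a sequence of piecewise-linear maps of bounded length can place a fixed-length geodesic segment over a vanishing parameter interval and drive the energy (hence the $W^{1,2}$ norm) to infinity. Your identification with a ``finite-dimensional space of $L$-break configurations'' is only valid on the constant-speed subset of $\Lambda$. The fix is easy---either arrange at the outset that every slice of $\tilde\Gamma^j$ is constant speed (e.g.\ by taking $\tilde\Gamma^j=\Psi\circ\hat\Gamma^j$ for some $\hat\Gamma^j\in\cS$, since $\Psi$ outputs constant-speed maps), or simply use property~(3) as above---but as written the compactness claim fails.
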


\begin{proof}
Let $\{\Gamma^j\}\subset\cS$ be such that $\max_{t\in [0,1]}\length(\Gamma^j_t)\leq W_0+1/j$. By applying the curve shortening map, we obtain a new sequence $\{\tilde{\Gamma}^j_t\}\subset\cS_\Lambda$, and $\length(\tilde{\Gamma}^j_t)\leq\length(\Gamma^j_t)$. By \eqref{eqn:X=PL},
\[W_0=W^\Lambda_0\leq\max_{t\in[0,1]}\length(\tilde{\Gamma}^j_t)\leq\max_{t\in[0,1]}\length(\Gamma^j_t)\leq W_0+1/j,\]
so 
\[\lim_{j\to\infty}\max_{t\in[0,1]}\length(\tilde{\Gamma}^j_t)=W_0.\]

Assume now for a contradiction that $\{\tilde{\Gamma}^j_t\}$ does not satisfy the second part of the Theorem. Then we can find $\varepsilon_0>0$ and sequences $\delta_i\to 0$, $j_i>1/\delta_i$, $t_i\in [0,1]$ such that $\length(\tilde{\Gamma}^{j_i}_{t_i})>W_0-\delta_i$ but $\dist(\tilde{\Gamma}^{j_i}_{t_i}, G)\geq \eps_0$.

Note that 
\begin{equation}\label{eqn:width_ineq}
    W_0-\delta_i<\length(\tilde{\Gamma}^{j_i}_{t_i})=\length(\tilde{\Gamma}^{j_i}_{t_i})\leq\length(\Gamma^{j_i}_{t_i})\leq W_0+\frac{1}{j_i}\leq W_0+\delta_i
\end{equation}

Thus, by Property (3) in Theorem \ref{thm:BCSP}, $\dist(\tilde{\Gamma}^{j_i}_{t_i}, \Gamma^{j_i}_{t_i})\leq\frac{\eps_0}{2}$ and
\[\dist(\Gamma^{j_i}_{t_i},G)\geq\dist(\tilde{\Gamma}^{j_i}_{t_i},G)-\dist(\tilde{\Gamma}^{j_i}_{t_i}, \Gamma^{j_i}_{t_i})\geq\frac{\eps_0}{2},\]
thus contradicting Property (5) in Theorem \ref{thm:BCSP}, since \eqref{eqn:width_ineq} implies that $\length(\Gamma^{j_i}_{t_i})-\length(\tilde{\Gamma}^{j_i}_{t_i})\to 0$. 

By applying the procedure in Remark \ref{rmk:X=PL} to $\{\tilde{\Gamma}^j_t\}\subset\cS_\Lambda$, we obtain the desired sequence of sweepouts.
\end{proof}

Theorem \ref{thm:almost} proves that if $W_0>\max\{\length(\gamma^1), \length(\gamma^2)\}$, then:
\begin{itemize}
    \item we can construct sweepouts of $\Omega$ starting at $\gamma^1$ and ending at $\gamma^2$ with the property that a slice of nearly maximal length in the sweepout is close to a geodesic in $\Omega$ between $p$ and $q$;
    \item the width $W_0$ is the length of an immersed geodesic in $\Omega$ between $p$ and $q$.
\end{itemize}

Hence,
\begin{corollary}\label{cor:immersed_geodesic}
    Suppose $W_0>\max\{\length(\gamma^1), \length(\gamma^2)\}$, then there is an immersed geodesic $\tilde{\sigma}$ in $\Omega$ joining $p$ to $q$, such that
    \begin{equation}
        \length(\tilde{\sigma})=W_0.
    \end{equation}
\end{corollary}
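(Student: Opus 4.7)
The plan is to extract, via Theorem \ref{thm:almost}, a sequence of nearly maximal slices $\Gamma^j_{t_j}$ that are close in $W^{1,2}$ to a sequence $\sigma_j \in G$ of immersed geodesics, and then pass to the limit using standard ODE compactness for geodesics with uniformly bounded initial velocity.

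First, I would apply Theorem \ref{thm:almost} to obtain a sequence $\{\Gamma^j\} \subset \cS$ with $\max_t \length(\Gamma^j_t) \to W_0$. For each $j$, choose $t_j \in [0,1]$ with $\length(\Gamma^j_{t_j}) \geq \max_t \length(\Gamma^j_t) - 1/j$, so that $\length(\Gamma^j_{t_j}) \to W_0$. The ``almost-maximal implies almost-critical'' conclusion of Theorem \ref{thm:almost} then yields, after passing to a subsequence, a sequence $\sigma_j \in G$ with $\dist(\sigma_j, \Gamma^j_{t_j}) \to 0$ in the $W^{1,2}$ norm. Since the length functional is continuous with respect to $W^{1,2}$ convergence (by H\"older's inequality applied to $|\sigma'|$), we obtain $\length(\sigma_j) \to W_0$ as well.

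Next, reparametrize each $\sigma_j : [0,1] \to \overline{\Omega}$ by constant speed, so that $|\sigma_j'| \equiv \length(\sigma_j) \leq 2W_0$ for all sufficiently large $j$. Each $\sigma_j \in G$ is by definition a smooth immersed geodesic of $(S^2,g)$ with $\sigma_j(0) = p$ and initial velocity $v_j = \sigma_j'(0)$ satisfying $|v_j| = \length(\sigma_j)$. Since the $v_j$ lie in a bounded subset of $T_pS^2$, we can extract a further subsequence with $v_j \to v$. By continuous dependence of the second-order geodesic ODE on initial data, the $\sigma_j$ then converge in $C^\infty([0,1])$ to the unique geodesic $\tilde\sigma$ of $(S^2,g)$ with $\tilde\sigma(0) = p$ and $\tilde\sigma'(0) = v$. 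Taking limits at the endpoint gives $\tilde\sigma(1) = \lim_j \sigma_j(1) = q$, and $\length(\tilde\sigma) = |v| = \lim_j|v_j| = W_0$. The image of $\tilde\sigma$ lies in $\overline\Omega$ by uniform convergence of the $\sigma_j([0,1]) \subset \overline\Omega$, and $|\tilde\sigma'| \equiv W_0 > 0$ by assumption \eqref{eqn:mountain_pass}, so $\tilde\sigma$ is immersed.

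The one step that requires care is the passage to the smooth limit: a priori an element of $G \subset \Lambda$ is only a piecewise linear map with $L$ breaks, so one could worry about obtaining merely a broken geodesic in the limit. This is automatically ruled out because the defining condition that each $\sigma_j \in G$ be an immersed geodesic forces the $L$ breaks to be removable, so each $\sigma_j$ is genuinely smooth and satisfies the geodesic equation globally on $[0,1]$. Consequently the ODE-based $C^\infty$ compactness above applies without modification, and the limit $\tilde\sigma$ is a smooth immersed geodesic of $(S^2,g)$ contained in $\overline\Omega$, joining $p$ to $q$, of length exactly $W_0$.
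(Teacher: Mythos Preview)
Your argument is correct and follows essentially the same route the paper has in mind: the paper simply records the corollary as an immediate consequence of Theorem~\ref{thm:almost}, and the compactness step you spell out (extracting $\sigma_j\in G$, using bounded initial velocity plus continuous dependence for the geodesic ODE to pass to a smooth limit) is the standard way to make that implication rigorous. Your treatment is in fact more detailed than the paper's, which defers this limiting argument to the embedded version (Corollary~\ref{cor:embedded_geodesic}) and there only cites \cite[Proposition~14]{carlottodelellis}.
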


\subsection{Sweepouts by embedded curves} Finally, we notice that we can upgrade from sweepouts consisting of immersed curves to sweepouts consisting of embedded curves. This improvement follows directly from \cite[Lemma 16, Lemma 17]{carlottodelellis}, which uses the results of \cite{ChambersLio}.

\begin{proposition}[\cite{carlottodelellis}, \cite{ChambersLio}]\label{prop:embeddings}
For any given $\eps>0$, the following holds: for every $\Gamma\in\cS$ there is $\overline{\Gamma}\in\cS$ such that for all $t\in[0,1]$
\begin{itemize}
    \item $\length(\overline{\Gamma}_t)\leq\length(\Gamma_t)+\eps$
    \item $\Gamma_t:(0,1)\to\Omega$ is an embedding.
\end{itemize}
\end{proposition}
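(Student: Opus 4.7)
The plan is to apply the Chambers-Liokumovich surgery procedure for untangling a continuous family of curves on a surface, in the fixed-endpoint formulation used by Carlotto-De Lellis in their analogous length-min-max setup. The goal is to modify the sweepout $\Gamma\in\cS$ slice-by-slice so as to remove self-intersections while only increasing the length of each slice by at most $\eps$, uniformly in $t\in[0,1]$.

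First, I would perform a standard genericity reduction. After a $W^{1,2}$-small smoothing, one may assume each $\Gamma_t$ is a smooth immersion, that transverse self-intersections persist on open subsets of $[0,1]$ and appear/disappear only at finitely many generic parameter values, and that the map $(t,\tau)\mapsto\Gamma_t(\tau)$ is a smooth generic homotopy in $\Omega$ with fixed endpoints $p$ and $q$. Because of the gain $\eps$ we are allowed, this reduction is harmless.

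Next, I would invoke the main technical statement of Chambers-Liokumovich, in the form stated in \cite[Lemmas 16, 17]{carlottodelellis}: any such generic family is homotopic, relative to the endpoints $p$ and $q$, to a family of embedded curves in $\Omega$ whose lengths exceed those of the original slices by at most $\eps$. The construction is combinatorial and local. One tracks the finite collection of self-intersection points of $\Gamma_t$ as $t$ varies, isolates an innermost bigon bounded by two subarcs of $\Gamma_t$ with common endpoints, and resolves it by replacing one of the two arcs by a slight pushoff of the other through a nearly length-preserving perturbation. Iterating this over all bigons, and carefully interpolating across the finitely many parameter values where the intersection pattern changes, produces the new family $\overline{\Gamma}$. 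The $\eps$ slack is distributed over these finitely many surgeries.

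The main obstacle, and the reason the fixed-endpoint/constrained version of Carlotto-De Lellis is needed rather than the bare statement for closed curves, is making sure that (a) throughout the surgeries every curve stays inside $\overline{\Omega}$, (b) the endpoints $p,q\in\partial\Omega$ are preserved, and (c) the resulting family is continuous in the $W^{1,2}$ topology so that $\overline{\Gamma}\in\cS$. Point (a) is handled using the strict $c$-convexity of $\partial\Omega$ toward $\Omega$: any bigon-resolution can be realized by pushoffs that remain in $\Omega$, because geodesic segments in $\Omega$ near the boundary are pulled inward by convexity. Points (b) and (c) are built into the cited lemmas. With these checks in place, $\overline{\Gamma}\in\cS$ satisfies both conclusions of the proposition.
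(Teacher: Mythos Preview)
Your proposal is correct and follows essentially the same route as the paper: both simply defer to \cite[Lemmas 16, 17]{carlottodelellis}, which package the Chambers--Liokumovich untangling procedure in exactly the fixed-endpoint form needed here. The paper offers no argument beyond that citation, so your sketch of the genericity reduction and the surgery heuristic is already more detail than the paper provides (though your ``innermost bigon resolution'' description is a slight oversimplification of the actual Chambers--Liokumovich monotonization argument).
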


By applying Proposition \ref{prop:embeddings} to a sequence of sweepouts satisfying the conclusions of Theorem \ref{thm:almost}, we obtain the following strengthening of Theorem \ref{thm:almost}.

\begin{theorem}\label{thm:almost+}
    Suppose \eqref{eqn:mountain_pass} is satisfied, then there exists a sequence $\{\Gamma^j\}\subset\cS$ of sweepouts consisting of embedded curves such that $\max_{t\in [0,1]}\length(\Gamma^j_s)\to W_0$ as $j\to\infty$, and for any $\eps>0$, there exists $\delta>0$ such that if:
    \begin{itemize}
        \item $j>1/\delta$, and
        \item there is $t_0\in[0,1]$ such that $\length(\Gamma^j_{t_0})>W_0-\delta$,
    \end{itemize}
    then
    \begin{equation}\label{eqn:almost_critical}
        \dist(\Gamma^j_{t_0}, G)<\eps.
    \end{equation}
\end{theorem}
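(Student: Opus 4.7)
The plan is to derive Theorem \ref{thm:almost+} directly from Theorem \ref{thm:almost} and Proposition \ref{prop:embeddings}. Let $\{\Gamma^j\}\subset\cS$ be the sequence provided by Theorem \ref{thm:almost}. For each positive integer $j$, I invoke Proposition \ref{prop:embeddings} with perturbation parameter $1/j$ to obtain $\overline{\Gamma}^j\in\cS$ whose slices are embeddings and which satisfy $\length(\overline{\Gamma}^j_t)\leq\length(\Gamma^j_t)+1/j$ for every $t\in[0,1]$.

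The length convergence $\max_t\length(\overline{\Gamma}^j_t)\to W_0$ is immediate: on one hand $\max_t\length(\overline{\Gamma}^j_t)\leq\max_t\length(\Gamma^j_t)+1/j\to W_0$, and on the other hand the infimum definition of $W_0$ over $\cS$ together with $\overline{\Gamma}^j\in\cS$ forces $\max_t\length(\overline{\Gamma}^j_t)\geq W_0$.

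For the almost-critical property, fix $\eps>0$ and choose $\delta_1>0$ via Theorem \ref{thm:almost} applied with parameter $\eps/2$, so that whenever $j>2/\delta_1$ and $\length(\Gamma^j_{t_0})>W_0-\delta_1$ one has $\dist(\Gamma^j_{t_0},G)<\eps/2$. Set $\delta:=\delta_1/2$. If $j>1/\delta$ and $\length(\overline{\Gamma}^j_{t_0})>W_0-\delta$, the length comparison yields $\length(\Gamma^j_{t_0})\geq\length(\overline{\Gamma}^j_{t_0})-1/j>W_0-\delta_1$, so Theorem \ref{thm:almost} gives $\dist(\Gamma^j_{t_0},G)<\eps/2$.

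The remaining step, and the main obstacle, is upgrading this to $\dist(\overline{\Gamma}^j_{t_0},G)<\eps$. By the triangle inequality it suffices to control the $W^{1,2}$ proximity $\dist(\overline{\Gamma}^j_{t_0},\Gamma^j_{t_0})$, which is not explicit in the statement of Proposition \ref{prop:embeddings} but can be read off from the cut-and-paste construction in \cite{ChambersLio} and \cite{carlottodelellis}, where embeddedness is restored by localized perturbations in small $W^{1,2}$ neighborhoods of the isolated self-intersection points. Alternatively, one can mimic the contradiction argument in the proof of Theorem \ref{thm:almost} directly on $\{\overline{\Gamma}^j\}$, using property (3) of Theorem \ref{thm:BCSP} to compare $\overline{\Gamma}^j_{t_0}$ with its curve-shortened image $\Psi(\overline{\Gamma}^j_{t_0})\in\Lambda$ (whose length and $W^{1,2}$-distance to $\overline{\Gamma}^j_{t_0}$ are controlled by the identity $W_0=W_0^\Lambda$) and property (5) to produce the required contradiction at the piecewise-linear level in $\cS_\Lambda$.
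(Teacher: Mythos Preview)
Your approach is exactly the paper's: take the minimizing sequence from Theorem~\ref{thm:almost} and apply Proposition~\ref{prop:embeddings} with shrinking perturbation parameter. The paper's entire proof is a single sentence asserting precisely this, so your write-up is in fact more detailed than the paper's.

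You have also correctly isolated the one nontrivial point the paper leaves implicit: Proposition~\ref{prop:embeddings} only controls lengths, not $W^{1,2}$ distances, so transferring the conclusion $\dist(\Gamma^j_{t_0},G)<\eps/2$ to $\overline{\Gamma}^j_{t_0}$ needs a separate argument. Of your two suggestions, the second is the intended route and the one you should commit to: since $\{\overline{\Gamma}^j\}$ is itself a minimizing sequence in $\cS$, one simply reruns the contradiction argument in the proof of Theorem~\ref{thm:almost} with $\overline{\Gamma}^j$ in the role of $\Gamma^j$. The squeeze $W_0-\delta_i<\length(\overline{\Gamma}^{j_i}_{t_i})\leq\max_t\length(\overline{\Gamma}^{j_i}_t)\to W_0$ forces the length drop under $\Psi$ to vanish, property~(3) of Theorem~\ref{thm:BCSP} then gives $W^{1,2}$-closeness of $\overline{\Gamma}^{j_i}_{t_i}$ to its Birkhoff shortening $\Psi(\overline{\Gamma}^{j_i}_{t_i})\in\Lambda$, and property~(5) yields the contradiction at the piecewise-linear level. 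Your first suggestion (reading $W^{1,2}$ control off the Chambers--Liokumovich construction) is plausible but less clean, since their statements in \cite{ChambersLio} and \cite{carlottodelellis} are formulated in terms of length only.
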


We now argue as in the proof of \cite[Proposition 14]{carlottodelellis}. By \eqref{eqn:almost_critical}, there is a sequence $\sigma_i=\Gamma^{j_i}_{t_i}$ which converges in $X$ (and, thus, uniformly) to a (smooth) geodesic $\tilde{\sigma}$, of length $W_0$. Even though, a priori, $\tilde{\sigma}$ need not be embedded, by uniqueness of solutions for the geodesic equation, it can only self-intersect transversely. However, this cannot happen for a uniform limit of embedded curves. Hence, we have the following strengthening of Corollary \ref{cor:immersed_geodesic}:

\begin{corollary}\label{cor:embedded_geodesic}
    Suppose $W_0>\max\{\length(\gamma^1), \length(\gamma^2)\}$, then there is an embedded geodesic $\tilde{\sigma}$ in $\Omega$ joining $p$ to $q$, such that
    \begin{equation}
        \length(\tilde{\sigma})=W_0.
    \end{equation}
\end{corollary}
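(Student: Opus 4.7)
The plan is to extract an almost-critical sequence from Theorem \ref{thm:almost+}, pass to a geodesic limit by standard ODE compactness, and then upgrade to embeddedness by exploiting the persistence of transverse self-intersections under $C^0$-convergence.

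First, I would apply Theorem \ref{thm:almost+} with $\eps_i = 1/i$ to produce embedded slices $\sigma_i := \Gamma^{j_i}_{t_i}$ with $\length(\sigma_i) \to W_0$ and $\dist(\sigma_i, G) \to 0$, and then pick companion geodesics $\tilde{\sigma}_i \in G$ with $\dist(\sigma_i, \tilde{\sigma}_i) \to 0$ in $W^{1,2}$. Each $\tilde{\sigma}_i: I \to \Omega$ is a constant-speed geodesic from $p$ to $q$ with uniformly bounded speed, so smooth dependence of geodesics on initial data together with Arzel\`a--Ascoli yields a subsequence converging smoothly on $I$ to a constant-speed geodesic $\tilde{\sigma}: I \to \overline{\Omega}$ with $\tilde{\sigma}(0) = p$, $\tilde{\sigma}(1) = q$, and $\length(\tilde{\sigma}) = W_0$. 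Strict $c$-convexity of $\partial\Omega$ with $c > 0$ then forces $\tilde{\sigma}(I) \cap \partial\Omega \subseteq \{p, q\}$, and along this subsequence $\sigma_i \to \tilde{\sigma}$ uniformly.

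It remains to show $\tilde{\sigma}$ is embedded. Suppose $\tilde{\sigma}(\tau_1) = \tilde{\sigma}(\tau_2)$ for some $\tau_1 < \tau_2$ with $(\tau_1, \tau_2) \neq (0, 1)$. If the tangent vectors at $\tau_1$ and $\tau_2$ are parallel, uniqueness of the geodesic ODE forces $\tilde{\sigma}$ to retrace a portion of its image, so that $\cH^1(\tilde{\sigma}(I)) < W_0$; but since each $\sigma_i$ is embedded with $\length(\sigma_i) \to W_0$ and $\sigma_i \to \tilde{\sigma}$ uniformly, a lower semicontinuity argument gives $\cH^1(\tilde{\sigma}(I)) \geq W_0$, a contradiction. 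Otherwise the self-intersection is transverse, and transversality is an open condition: any curve sufficiently $C^0$-close to $\tilde{\sigma}$ must itself self-intersect near $\tilde{\sigma}(\tau_1)$, contradicting embeddedness of $\sigma_i$ for large $i$.

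The subtlest ingredient will be the tangential case, where one must carefully control $\cH^1(\tilde{\sigma}(I))$ versus the lengths of the $\sigma_i$, and in particular rule out that all overlap is concentrated in an asymptotically negligible region near the endpoints $p$ and $q$. The strategy of \cite[Proposition 14]{carlottodelellis}, which in turn relies on \cite{ChambersLio}, handles precisely this issue and would serve as the template.
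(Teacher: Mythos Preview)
Your overall strategy---extract an embedded almost-maximizing sequence from Theorem~\ref{thm:almost+}, pass to a geodesic limit, and rule out self-intersections by splitting into tangential and transverse cases---is exactly the paper's, and your treatment of the transverse case (stability under $C^0$ limits of embeddings) matches the paper verbatim.

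The tangential case, however, has a genuine gap. The inequality $\cH^1(\tilde\sigma(I)) \geq W_0$ does not follow from uniform (or even $W^{1,2}$) convergence of embedded curves with lengths tending to $W_0$: if $\tilde\sigma$ were, say, a double cover of a short closed curve $C$, then embedded spirals in a tubular neighborhood of $C$ approximate $\tilde\sigma$ in $W^{1,2}$ with length near $2\,\length(C)$, yet $\cH^1(\tilde\sigma(I)) = \length(C)$. So there is no lower-semicontinuity principle of the kind you invoke, and the reference to \cite{carlottodelellis} does not supply one either---that argument is precisely the ODE-uniqueness one below. The paper's route is much shorter and uses only ingredients you already have. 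By ODE uniqueness for constant-speed geodesics, a tangential self-intersection with anti-parallel tangents forces $\tilde\sigma'$ to vanish at the midpoint $(\tau_1+\tau_2)/2$, contradicting constant nonzero speed; with parallel tangents it forces $\tilde\sigma$ to be periodic, so that $\tilde\sigma$ returns to the boundary point $p$ at some interior parameter time $T\in(0,1)$, and a geodesic meeting the strictly $c$-convex boundary at an interior time must immediately exit $\overline\Omega$. Thus every self-intersection is transverse, and your transverse-case argument finishes the proof. Both pieces---ODE uniqueness and the boundary-convexity barrier---are already in your first paragraph; combine them directly rather than detouring through $\cH^1$.
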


\bibliographystyle{amsalpha}
\bibliography{bib}

\end{document}